\newtheorem{thm}{Theorem}[section]
\newtheorem{theorem}[thm]{Theorem}
\newtheorem{corollary}[thm]{Corollary}
\newtheorem{lemma}[thm]{Lemma}
\newtheorem{proposition}[thm]{Proposition}
\theoremstyle{definition}
\newtheorem{definition}[thm]{Definition}
\newtheorem{example}[thm]{Example}
\newtheorem{notation}[thm]{Notation}
\newtheorem{remark}[thm]{Remark}
\newtheorem{property}[thm]{Property}
\newtheorem{hypothesis}[thm]{Hypothesis}
\numberwithin{equation}{section}
\numberwithin{mytheorem}{subsection}
\numberwithin{mytheorem}{subsection}
\numberwithin{myconjecture}{subsection}
\numberwithin{mydefinition}{subsection}
\numberwithin{myremark}{subsection}
\numberwithin{mysituation}{subsection}
\numberwithin{myhypothesis}{subsection}
\numberwithin{myquestion}{subsection}
\numberwithin{mynotation}{subsection}
\numberwithin{myfact}{subsection}
\numberwithin{myexamples}{subsection}
\numberwithin{myexample}{subsection}
\numberwithin{myconstruction}{subsection}
\numberwithin{mycaution}{subsection}
\numberwithin{myproposition}{subsection}
\numberwithin{mylemma}{subsection}
\numberwithin{mycorollary}{subsection}
\def\AAA{\mathbb{A}}
\def\CC{\mathbb{C}}
\def\calF{\mathbb{E}}
\def\FF{\mathbb{F}}
\def\GG{\mathbb{G}}
\def\LL{\mathbb{L}}
\def\MM{\mathbb{M}}
\def\NN{\mathbb{N}}
\def\QQ{\mathbb{Q}}
\def\RR{\mathbb{R}}
\def\SS{\mathbb{S}}
\def\WW{\mathbb{W}}
\def\ZZ{\mathbb{Z}}
\def\calF{\mathcal{F}}
\def\calO{\mathcal{O}}
\def\sp{\mathrm{sp}}
\def\HP{\mathrm{HP}}
\def\NP{\mathrm{NP}}
\def\IHP{\mathrm{IHP}}
\def\GNP{\mathrm{GNP}}
\def\sgn{\mathrm{sgn}}
\def\val{\mathrm{val}}
\def\diag{\mathrm{diag}}
\def\Vol{\mathrm{Vol}}
\def\Deg{\mathrm{Deg}}
\def\Zar{\mathrm{Zar}}
\def\res{\mathrm{res}}
\def\LD{\mathrm{LD}}
\def\Ver{\mathrm{Ver}}
\def\Frob{\mathrm{Frob}}
\def\Zp{\ZZ_p}
\def\ux{\underline{x}}
\newcommand{\Tr}{\mathrm{Tr}}
\DeclareMathOperator{\Cone}{Cone}
\DeclareMathOperator{\Iso}{Iso}
\begin{document}\large

\title[Generic Newton polygon for exponential sums with parallelotope base] {Generic Newton polygon for exponential sums in $n$ variables with parallelotope base}

\author{Rufei Ren}
\address{Department of Mathematics 
	University of Rochester 
	915 Hylan Building,
	Rochester, NY 14627}
\email{rren2@ur.rochester.edu}
\date{\today}
\begin{abstract}
	Let $p$ be a prime number. Every $n$-variable polynomial $f(\underline x)$ over a finite field of characteristic $p$ defines an Artin--Schreier--Witt tower of varieties whose Galois group is isomorphic to $\ZZ_p$. 
	Our goal of this paper is to study the Newton polygon of the $L$-function associated to a nontrivial finite character of $\ZZ_p$ and a generic polynomial whose convex hull is an $n$-dimensional paralleltope $\Delta$. We denote this polygon by $\GNP(\Delta)$. We prove a lower bound of $\GNP(\Delta)$, 
	which is called the improved Hodge polygon $\IHP(\Delta)$. We show that $\IHP(\Delta)$ lies above the usual Hodge polygon $\HP(\Delta)$ at certain infinitely many points, and when $p$ is larger than a fixed number determined by $\Delta$, it coincides with  $\GNP(\Delta)$ at these points. As a corollary, we roughly determine the distribution of the slopes of $\GNP(\Delta)$.
\end{abstract}

	\subjclass[2010]{11T23 (primary), 11L07 11F33 13F35 (secondary).}
	\keywords{Artin--Schreier--Witt towers, $T$-adic exponential sums, Slopes of Newton polygon, $T$-adic Newton polygon for Artin--Schreier--Witt towers, Eigencurves}
	\maketitle
	
	\setcounter{tocdepth}{1}
	\tableofcontents
	
	\section{Introduction}\label{section 1} 
	We shall state our main results and their motivation after recalling the notion of
	$L$-functions for Witt coverings. 
	Let $p$ be a prime number, and
	 $$f(\ux):=\sum_{P\in \ZZ^n_{\geq 0}} a_{P}\ux^{P}$$ be an $n$-variable polynomial in $\overline\FF_p[x_1,\dots,x_n]$. We denote by $\FF_p(f)$ the coefficient field of $f(\ux)$ and set $m(f):=[\FF_p(f):\FF_p]$.  
	 Then we put $\hat a_P \in \ZZ_{p^{m(f)}}$ to be the Teichmuller lift of $a_P$, where $\ZZ_{p^{m(f)}}$ is the unramified extension of $\ZZ_p$ of degree $m(f)$, and call $$\hat{f}(\ux):=\sum_{P\in \ZZ^n_{\geq 0}} \hat a_{P} \ux^{P}$$ the \emph{Teichm\"uller lift} of $f(\ux)$.
	 
	Viewing $\ZZ^n$ as the lattice points in $\RR^n$ with origin denoted by $\calO$, we call the convex hull of $\calO\cup\big\{P\;|\;a_{P}\neq 0\big\}$ \emph{the polytope} of $f$ and denote it by $\Delta_f$.
	
	The \emph{Artin--Schreier--Witt tower}
	associated to $f$ is the sequence of varieties $\mathcal V_i$ over $\FF_{p^{m(f)}}$ defined by the following equations: 
	\[\mathcal V_i:y_i^F-y_i=\sum_{P\in \Delta_f}(a_{P}{\ux}^P,0,0,\dots)_i,\]
 	where $y_i=(y_i^{(1)}, y_i^{(2)},\dots, y_i^{(i)})$ are Witt vectors of length $i$, and $\bullet^F$ means raising
	each Witt coordinate to the $p$th power. 
	The Artin--Schreier--Witt tower $\cdots\to \mathcal V_i\to\cdots\to \mathcal V_0:=\AAA^n$
	is a tower of Galois covers of $\AAA^n$
	with total Galois
	group $\Zp$, and consequently the study of zeta function of the tower can be reduced to the study of the L-functions associated to (finite) characters of the Galois group $\Zp$. For more details we refer the readers to \cite[\S 1]{Davis-wan-xiao}.
	
	Let $(\GG_{m})^n$ be the $n$-dimensional torus over $\mathbb{F}_{p^{m(f)}}$.
	The main subject of our study is the $L$-function associated to a finite character $\chi: \Zp \to \CC_p^\times$ of conductor $p^{m_\chi}$ which is given by
	$$L_f^*(\chi,s): =\prod\limits_{\ux\in |(\GG_{m})^n|}
	\frac{1}{1-\chi\Big(\Tr_{\QQ_{p^{m(f)\deg(\ux)}}/\QQ_p}(\hat{f}(\hat{\ux}))\Big)s^{\deg(\ux)}},$$
		where $|(\GG_{m})^n|$ is the set of closed points of $(\GG_{m})^n$, $\hat\ux$ is the Teichmuller lift of a geometric point at $\ux$, and $\deg(\ux)$ stands for the degree of $\ux$ over $\FF_{p^{m(f)}}$.

			It is proved in \cite[Theorem~1.3]{liu-wei} that when $f$ is a non-degenerate polynomial with convex hull $\Delta_f$, the function $$L^*_f(\chi, s)^{{(-1)}^{n-1}}=\sum_{i=0}^{n!p^{n(m_\chi-1)} \Vol (\Delta_f)} v_{i}s^i \in \ZZ_p[\zeta_{p^{m_\chi}}][s]$$ is a polynomial of degree $n!p^{n(m_\chi-1)} \Vol (\Delta_f)$, where $\zeta_{p^{m_{\chi}}}$ is a primitive $p^{m_{\chi}}$-th root of unity. 
			
			In this paper, we confine ourselves to studying these $f$ whose polytopes are of the simpler shape. 
			Let $\Delta$ be an $n$-dimensional paralleltope generated by linearly independent vectors $\overrightarrow{\calO \mathbf V_1},\overrightarrow{\calO \mathbf V_2},\dots,\overrightarrow{\calO \mathbf V_n}$, where $\mathbf V_1,\mathbf V_2,\dots,\mathbf V_n$ are integral points.
			
			\begin{hypothesis}\label{hypothesis}
			We assume that $p$ is a prime such that $p\nmid (n!\Vol(\Delta))$ and $p>(n+4)D$, where 
			$D$ is a positive integer depending only on $\Delta$ (See Definition~\ref{definition of D}).
		
			In particular, when $\Delta$ is an $n$-dimensional cube with side length $d$, the integer $D = d$.
			\end{hypothesis}
			 
			\begin{notation}\label{??}
		 Let $k\Delta$ denote a scaling of $\Delta$. We write $$\Delta_k^+: = k\Delta \cap \ZZ^n$$
		 for the set consisting of the lattice points in $k\Delta$ and put $$\mathbbm{x}^+_k:=\#\Delta^+_k.$$
		 Let $\Delta^\circ$ denote the paralleltope $\Delta$ with all faces not containing $\calO$ removed. We put $$\Delta_k^-: = k\Delta^\circ \cap \ZZ^n\quad \textrm{and}\quad \mathbbm{x}^-_k:=\#\Delta_k^-.$$ 
		 For simplicity, we write $\Delta^\pm$ for $\Delta^\pm_1$ when no confusion can rise. 
		\end{notation}
		
		%Moreover, for those simplex $$\Delta:=\left\{\theta_{0}u_{0}+\dots +\theta _{n}u_{n}\bigg|\theta_{i}\geq 0,0\leq i\leq n,\sum_{i=0}^{n}\theta_{i}=1\right\},$$ where $u_i\in \ZZ^n$, we denote $\square_{\Delta}:=\left\{\theta _{0}u_{0}+\dots +\theta _{n}u_{n}|0\leq\theta _{i}\leq 1\right\}.$
		
		\begin{notation}\label{notation for xi}
		 One may naturally identify the set of polynomials with polytope $\Delta$ as an open subscheme of  $\overline \FF_p^{\Delta^+}$ by recording the coefficients $a_P$ of $f$.
%			\begin{enumerate}
%				\item Let $$\calF(\Delta):=\Big\{ f( x)\in \overline \FF_p[ x]\;\big|\; \Delta_f=\Delta \Big\}.$$
%				\item 	We put 
%				\begin{equation}
%				\begin{split}
%				\xi:& \quad\quad\calF(\Delta)\ \ \longrightarrow \quad\overline \FF_p^{\mathbbm{x}_1^+}\\
%				&	\sum\limits_{i=1}^{\mathbbm{x}_1^+}a_{\mathbf{P}_i}{x}^{\mathbf{P}_i}	\longmapsto ( a_{\mathbf{P}_1},a_{\mathbf{P}_2},\dots, a_{\mathbf{P}_{\mathbbm{x}_1^+}}).
%				\end{split}
%				\end{equation}
%			\end{enumerate}

		\end{notation}
		
\begin{definition}\label{definition for NP chi} 
	If $f$ satisfies the non-degenerate condition in \cite{liu-wei}, we call the lower convex hull of the set of points $\Big(i,p^{m_\chi-1}(p-1)\val_{p^{m(f)}}(v_{i})\Big)$ the \emph{normalized Newton polygon of $L^*_f(\chi, s)^{{(-1)}^{n-1}}$} which is denoted by $\NP(f, \chi)_{L}$. Here, $\val_{p^{m(f)}}(-)$ is the $p$-adic valuation normalized so that $\val_{p^{m(f)}}(p^{m(f)})=1.$  
\end{definition}
		
			The following Theorem~\ref{theorem for L} and Theorem~\ref{generic newton polygon} are the main results of this paper.

		\begin{theorem}\label{theorem for L}
			Assume Hypothesis~\ref{hypothesis}. Let $\calF(\Delta)$ denote the set of all non-degenerate polynomials
			$f (\ux)= \sum\limits_{P \in \Delta^+} a_P  \ux^P\in \overline \FF_p[\ux]$ with $\Delta_f =\Delta$. 
			Then there is a Zariski open subset $O_{\Zar}\subset \calF(\Delta)$ such that for any $f\in O_{\Zar}$ and any finite character $\chi:\Zp\to \CC_p^\times$ of conductor $p^{m_\chi}$,
			if we put  $\left\{\alpha_1,\dots, \alpha_{p^{n(m_\chi-1)}n!\Vol(\Delta)}\right\}$ to be the set of $p^{m(f)}$-adic Newton slopes of 
			$L^*_f(\chi,s)^{{(-1)}^{n-1}}$, it has the following distribution.
			 
			 For every $0\leq i_1\leq n-1$ and every $0\leq i_2\leq p^{m_\chi-1}-1$ we have
			\begin{enumerate}
			\item $\#\Big\{\alpha_j\;\Big|\;\alpha_{j}\in (i_1+\frac{i_2}{p^{m_\chi-1}},i_1+\frac{i_2+1}{p^{m_\chi-1}})\Big\}=\sum\limits_{t=0}^{i_1}(-1)^t\binom{n}{t}\Big(\mathbbm x^-_{(i_1-t)p^{m_\chi-1}+i_2+1}-\mathbbm x^+_{(i_1-t)p^{m_\chi-1}+i_2}\Big),$
		\item	$\#\Big\{\alpha_j\;\Big|\;\alpha_{j}=i_1+\frac{i_2}{p^{m_\chi-1}}\Big\}= \sum\limits_{t=0}^{i_1}(-1)^t\binom{n}{t}\Big(\mathbbm x^+_{(i_1-t)p^{m_\chi-1}+i_2}-\mathbbm x^-_{(i_1-t)p^{m_\chi-1}+i_2}\Big).$\\
			\end{enumerate}

		\end{theorem}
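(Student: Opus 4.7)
The plan is to observe that the two formulas together completely determine $\NP(f,\chi)_L$ on its entire horizontal extent $[0, p^{n(m_\chi-1)}n!\Vol(\Delta)]$: statement (2) gives the vertical jumps of the polygon at each rational breakpoint $i_1 + i_2/p^{m_\chi-1}$, while statement (1) gives the horizontal length of each segment of constant slope strictly between consecutive breakpoints. So I would first locate the vertices of the generic polygon at these rational abscissae, compute their ordinates, and then extract both multiplicities by differencing consecutive heights.

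The starting point is the Liu--Wei setup: under the non-degeneracy hypothesis, $L^*_f(\chi,s)^{(-1)^{n-1}}$ is a polynomial of degree $p^{n(m_\chi-1)} n! \Vol(\Delta)$ whose Newton polygon is bounded below by the improved Hodge polygon $\IHP(\Delta)$ introduced earlier in the paper. I would verify that the ordinates predicted by summing the formulas in (1) and (2) coincide with the $\IHP(\Delta)$-heights at the distinguished abscissae. The relevant weight function here is the parallelotope gauge on $\Delta^+_{m_\chi}$; its level sets are controlled by an inclusion-exclusion over the $n$ codimension-one facets of $\Delta$ passing through $\calO$, and this is precisely where the $(-1)^t\binom{n}{t}$ signs and the lattice-point counts $\mathbbm{x}^\pm_{(i_1-t)p^{m_\chi-1}+i_2}$ come from: the term $t$ removes the overcount from $t$-fold intersections of such facets, which are themselves smaller parallelotopes inheriting the scaling by $i_2$.

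With the $\IHP$-heights so identified, I would invoke the paper's central technical result that, under Hypothesis~\ref{hypothesis}, $\GNP(\Delta)$ meets $\IHP(\Delta)$ at all these distinguished abscissae. This promotes the generic lower bound $\IHP(\Delta) \leq \NP(f,\chi)_L$ to an equality at the breakpoints on a Zariski open subset $O_{\Zar} \subset \calF(\Delta)$, and statements (1) and (2) then follow purely formally by differencing consecutive heights. The main obstacle is precisely this $\GNP = \IHP$ coincidence: it reduces to checking that the determinant of a square minor of the Frobenius matrix, namely the one picking out the weight-bounded subbasis indexed by the lattice points contributing up to the given breakpoint, is a nonzero regular function on $\calF(\Delta)$. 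The bound $p > (n+4)D$ in Hypothesis~\ref{hypothesis} is exactly what is needed to rule out unwanted $p$-adic cancellations in this determinant and to ensure that the leading term of the relevant minor is a genuine polynomial in the $a_P$'s. By contrast, the inclusion-exclusion used in the ordinate computation is a comparatively routine count, once one records carefully how the parallelotope gauge of a lattice point in $k\Delta \setminus (k-1)\Delta^\circ$ jumps across facets through $\calO$.
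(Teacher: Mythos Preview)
Your overall architecture is right---quote the paper's key technical result that the generic polygon touches $\IHP(\Delta)$ at the distinguished abscissae, then read off the slope multiplicities---but two of your structural identifications are off, and they matter.

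First, $\IHP(\Delta)$ and $\GNP(\Delta)$ are defined for, and bound, the Newton polygon $\NP(f,\chi)_C$ of the \emph{characteristic power series} $C^*_f(\chi,s)$, not of $L^*_f(\chi,s)^{(-1)^{n-1}}$. The improved Hodge polygon is an infinite polygon; the $L$-polygon is a finite one of length $p^{n(m_\chi-1)}n!\Vol(\Delta)$, and there is no sense in which $\IHP(\Delta)\le \NP(f,\chi)_L$ directly. The paper's argument works entirely on the $C$-side: one shows $\NP(f,\chi)_C$ passes through the vertices $(\mathbbm{x}_k^\pm, h(\Delta_k^\pm))$ for all $k\ge 0$ (Lemmas~\ref{lemmafor1.7(1)}--\ref{lemmafor1.7(3)}), and only afterward transfers this to $L$.

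Second, and relatedly, your explanation of the alternating binomials $(-1)^t\binom{n}{t}$ is not correct. They do not come from inclusion--exclusion over the facets of the parallelotope through $\calO$; they come from the product relation
\[
L^*_f(\chi, s)^{(-1)^{n-1}}=\prod_{j=0}^{n}C^*_f(\chi, q^{j}s)^{(-1)^j\binom{n}{j}},
\]
which is the M\"obius-type inversion of \eqref{C(chi, s)}. Once you know the $C$-polygon has vertices at every $\mathbbm{x}_k^\pm$, the number of $C$-slopes in $[k(p-1),(k+1)(p-1))$ is exactly $\mathbbm{x}_{k+1}^- - \mathbbm{x}_k^-$ (and similarly for $\le$ versus $<$ using $\mathbbm{x}^+$). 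Feeding these into the displayed product, the shift $s\mapsto q^j s$ moves each $C$-slope up by $j(p-1)$, and the signed multiplicities $(-1)^j\binom{n}{j}$ then produce precisely the formulas in (1) and (2) after the normalization by $(p-1)p^{m_\chi-1}$. Your ``facet inclusion--exclusion'' would at best reproduce the formula $\mathbbm{x}_k^+ - \mathbbm{x}_k^- = \sum_{I\ne\emptyset}(\cdots)$ from Lemma~\ref{polynomialforx}, which is a different identity and does not carry the index shift by $(i_1-t)p^{m_\chi-1}$. So the missing step in your plan is exactly the passage through $C^*_f$ and equation~\eqref{equation LC}; without it the binomial weights have no source.
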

	\begin{remark}
		\begin{itemize}
			\item By Poincar\'e duality, all slopes of $L^*_f(\chi,s)^{{(-1)}^{n-1}}$ belong to $[0,n)$.
		\item The distribution above includes all slopes of $L^*_f(\chi,s)^{{(-1)}^{n-1}}$. 
		\end{itemize}
	\end{remark}
	
			To study $L$-function it is more convenient to work with the so-called \emph{characteristic power series} 
		\begin{equation}\label{C(chi, s)}
		C^*_f(\chi, s):=\Big(\prod\limits_{j=0}^{\infty}L^*_f(\chi, {p^{m(f)j}}s)^{ \binom{n+j-1}{n-1}}\Big)^{{(-1)}^{n-1}},
		\end{equation}
		whose normalized $p$-adic Newton polygon we denote by $\NP(f, \chi)_{C}$. 
		
		\begin{definition}\label{generic Newton polygon} 
		The \emph{generic Newton polygon} of $\Delta$ is defined by
			$$\GNP(\Delta):=\inf\limits_{\substack{\chi: \Zp/p^{m_\chi}\Zp\to \CC_p^\times\\\Delta_f=\Delta}}\Big(\NP(f, \chi)_{C}\Big),$$
			where $\chi:\ZZ_p\to \CC_p^\times$ runs over all nontrivial finite characters, and $f$ runs over all non-degenerate polynomials in $\overline \FF_p[ x]$ such that $\Delta_f=\Delta$. 
		\end{definition}
	
%		Similarly,
%		we write $\NP(f, \chi)_C$ for the normalized Newton polygon of $C^*_f(\chi, s)$.

		\begin{theorem}\label{generic newton polygon}
		 The generic Newton polygon $\GNP(\Delta)$ passes through the points	
			$(\mathbbm{x}_k^\pm(\Delta), h(\Delta^\pm_k))$ for any $k\geq 0$, where 
			$\mathbbm{x}^\pm_k$ and $h(\Delta^\pm_k)$ are defined in Notation~\ref{cone} and Definition~\ref{definition of h} respectively.
		\end{theorem}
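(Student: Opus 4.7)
The plan is to deduce Theorem~\ref{generic newton polygon} by combining two ingredients already announced in the abstract: a lower bound $\GNP(\Delta) \geq \IHP(\Delta)$, and a matching upper bound at the specified break points. Since $\GNP(\Delta)$ is then sandwiched between $\IHP(\Delta)$ and the Newton polygon of a specific generic $f$, it suffices to show that both bounds pass through $(\mathbbm{x}_k^\pm, h(\Delta^\pm_k))$ for every $k \geq 0$.

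First I would set up the Dwork-style cohomological interpretation of the characteristic power series $C^*_f(\chi, s)$ defined in \eqref{C(chi, s)}. Following the Liu--Wei framework used to establish the degree formula for $L^*_f(\chi, s)$, realize $C^*_f(\chi, s)$ as the Fredholm determinant of a compact operator $U_\chi$ acting on a $p$-adic Banach space of overconvergent functions. Organize the natural monomial basis indexed by lattice points of the cone over $\Delta$ by recording which dilation $k\Delta$ (respectively $k\Delta^\circ$) each point first belongs to; the resulting filtration has successive ranks $\mathbbm{x}^\pm_k - \mathbbm{x}^\pm_{k-1}$. Using Dwork's splitting function together with standard Teichm\"uller estimates, bound the $p$-adic valuations of the entries of $U_\chi$ and show that the induced lower bound on the Newton polygon of $\det(1 - s U_\chi)$ is precisely $\IHP(\Delta)$, and that this polygon passes through $(\mathbbm{x}_k^\pm, h(\Delta^\pm_k))$ at every $k$.

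For the matching upper bound I would isolate, for a generic $f \in \calF(\Delta)$, the leading principal minor of $U_\chi$ of size $\mathbbm{x}_k^\pm$ and identify its leading term as an explicit polynomial in the Teichm\"uller lifts $\hat a_P$ of the coefficients of $f$. Non-vanishing of this polynomial carves out the Zariski open set $O_{\Zar}\subset \calF(\Delta)$ referenced in Theorem~\ref{theorem for L}; on this set the $p$-adic valuation of the minor is exactly $h(\Delta^\pm_k)$, producing the desired matching vertex of the Newton polygon.

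The main obstacle is this last step: identifying the leading term of the principal minor demands a delicate combinatorial analysis of how monomials in the $\hat a_P$ collect in the determinant expansion, and a proof that the contributions do not cancel. The parallelotope shape of $\Delta$ is essential here, since it provides a clean fundamental-domain description of $\Delta_k^+$ and $\Delta_k^-$ which makes the combinatorial identities tractable. The conditions $p \nmid \Vol(\Delta)$ and $p > (n+4)D$ imposed in Hypothesis~\ref{hypothesis} are calibrated precisely so that these identities remain non-degenerate after reduction modulo $p$, guaranteeing that the leading coefficient of the principal minor survives.
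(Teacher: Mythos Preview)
Your overall strategy---sandwich $\GNP(\Delta)$ between $\IHP(\Delta)$ from below and $\NP(f,\chi)_C$ for a generic $f$ from above, and verify both pass through $(\mathbbm x_k^\pm, h(\Delta_k^\pm))$---matches the paper's. The lower bound via entrywise $T$-adic estimates on the Dwork operator is exactly Proposition~\ref{IHP is below NPchi} and Corollary~\ref{corollary specialization}, and the upper bound via non-vanishing of leading coefficients of principal minors is the content of Proposition~\ref{proposition} and \S\ref{section 5}.

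There is, however, a genuine gap in your plan for the upper bound. You propose to show, for each $k\geq 0$, that the leading term of the size-$\mathbbm x_k^\pm$ minor is a nonzero polynomial in the $\hat a_P$, and then take the common nonvanishing locus as $O_{\Zar}$. But this is an \emph{infinite} family of polynomial conditions, and an intersection of infinitely many Zariski opens need not be Zariski open (nor even nonempty). The paper avoids this by proving non-vanishing only for $1\leq k\leq n+2$ (Proposition~\ref{proposition}); the combinatorial argument in \S\ref{section 5} genuinely uses $k\leq n+2$, since the inequality \eqref{conditionfor5.26} requires $K^\pm<n+2$, which in turn bounds the size of the sets $Y_{K^\pm}$ appearing in Proposition~\ref{lemma M}.

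The passage from finitely many $k$ to all $k$ is a separate, essential step (Lemma~\ref{lemmafor1.7(2)}): one shows that $\mathbbm x_k^-$ is a polynomial in $k$ of degree $n$ (Lemma~\ref{polynomialforx}) and $h(\Delta_k^-)$ is a polynomial in $k$ of degree $n+1$ (Lemma~\ref{lemma polynomial for h}), and then uses the product relation \eqref{C(chi, s)} between $C_f^*$ and $L_f^*$ to express the slope counts of $\NP(f,\chi_0)_C$ in each interval $[(\ell-1)(p-1),\ell(p-1))$ as polynomials in $k$ of the same degrees. Agreement at $n+2$ values then forces agreement for all $k$. This bootstrapping step is absent from your proposal and is not something the determinant combinatorics alone will supply.
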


		In \cite{Davis-wan-xiao}, Davis, Wan, and Xiao studied the $p$-adic Newton slopes  of $L^*_f(\chi, s)$ and $C^*_f(\chi, s)$  when $f$ is a one-variable polynomial whose degree $d$ is coprime to $p$. They concluded that, for each character $\chi:\Zp\to \CC_p^\times$ of a relatively large conductor, the $p$-adic Newton slopes of $L^*_f(\chi, s)$ are in a finite union of arithmetic progressions.
		Their proof strongly inspired the proof of spectral halo conjecture by Liu, Wan, and Xiao in \cite{liu-wan-xiao}; we refer to \cite[\S 1.5]{ren-wan-xiao-yu} for the discussion on the analogy of the two proofs. 
		
		Motivated by the attempt of extending spectral halo type results beyond the case of modular forms, 
		it is natural to ask whether one can generalize the main results of \cite{Davis-wan-xiao} to more general cases: 
		\begin{enumerate}
			\item changing the tower to $\ZZ_{p^\ell}$ for $\ell\geq2$, and
			\item making the base to higher dimensional.
		\end{enumerate}
	
		The first case is examined in a joint work with Wan, Xiao, and Yu see \cite{ren-wan-xiao-yu}. The goal of this current paper is to investigate the second case.
		
		From the Iwasawa theory point of view, it is important to have access to the Newton polygon $\NP(f,\chi)_C$ associated to this Artin-Schreier-Witt tower.  When $p$ is ``ordinary", this polygon was explicitly computed by Adolphson--Sperber \cite{AS}, Berndt--Evans \cite{BR}, and Wan \cite{wan} in many special cases, and by Liu-Wan \cite{liu-wan} in the general case (and in the $T$-adic setup).

%	
%	 Newton polygon of $\NP(f, \chi)_{C}$ plays a key role in studying the Artin--Schreier--Witt tower associated to $f$. We introduce a little bit about its development.

	Going beyond the ordinary case, there has been many researches on understanding the generic Newton polygon of $L_f(\chi, s)$ when $f$ is a polynomial of a single variable. Here is an incomplete list.
	\begin{itemize}
		\item In \cite{Davis-wan-xiao}, Davis, Wan, and Xiao prove that the Newton slopes of $L_f(\chi,s)$ form a finite union of arithmetic progressions, when $f$ is a one-variable polynomial and $\chi$ is a finite character of a relatively large conductor. 
		\item When $p$ is large enough, Zhu \cite{zhu03} and Scholten--Zhu \cite{sz} showed that for a non-degenerate one-variable polynomial $f$ and a finite character $\chi_0$ of conductor $p$, the Newton polygon $\NP(f,\chi_0)_L$ coincides $\GNP(\Delta)$.
		\item Later, Blache, Ferard, and Zhu in \cite{bfz} proved a lower bound for the Newton polygon of $f(x)\in\FF_q[x,\frac{1}{x}]$ of degree $(d_1, d_2)$, which is called a Hodge-Stickelberger polygon. They also showed that when $p$ approaches to infinity, the Newton polygon $\NP(\chi,f)_L$ coincides with the  Hodge-Stickelberger polygon. 
		\item  In \cite{bf}, Blache and Ferard worked on the generic Newton polygon associated to characters of large conductors.
		\item In \cite{ouy}, Ouyang and Yang studied the one-variable polynomial $f(x) = x^d +a_1x$. A similar result can be found in \cite{ouz}, where Ouyang and Zhang studied the family of polynomials of the form  $f(x) = x^d +a_{d-1}x^{d-1}$. 	
		\item In \cite{kw}, Koster and Wan studied a more general $\ZZ_p$-tower, and they proved the genus stability of all such $\ZZ_p$-tower.
	\end{itemize}
	However, for technical reasons, it is difficult to prove that the slopes of the $L$-function form a union of arithmetic progression when $f$ is a multi-variable polynomial. 
	Zhu in \cite{zhu} shows that  $\GNP(\Delta)$ and $\IHP(\Delta)$ coincide for characters of $\ZZ_p$ of conductor $p$ when $\Delta$ is a rectangular and $p$ is large enough. 
	A similar result is obtained by the author in \cite{Ren2} when the polytope of $f$ is an isosceles right triangle. 
	
	In this paper, we focus on the generic Newton polygon of an $n$-dimensional parallelotope $\Delta$.  
	Our main contribution in this paper is to prove the distribution of slopes of the generic Newton polygon $\GNP(\Delta)$, when  $p$ is not necessary to be ordinary with respect to $\Delta$. We refer reader to Theorem~\ref{theorem for L} for the statement.

%	in this paper is to find an improved lower bound $\IHP(\Delta)$ for $\NP(f, \chi)_{C}$ which is strictly above the classical Hodge bound when $\Delta$ is non-ordinary, and
%	prove that $\IHP(\Delta)$ coincides with $\GNP(\Delta)$ at infinitely points. As a corollary, we deduce that the slopes of $\GNP(\Delta)$ roughly form an arithmetic progression with increasing multiplicities. 
%	
%	The proof of  falls into four steps.
	\medskip
	Now we list the key steps of the proof of our main theorems.
	
	{Step 1}: Instead of working with the $L$-function itself, it is more convenient (from the point of view of Dwork trace formula) to work with the characteristic power series $C^*_f(\chi, s)$, which recovers the $L$-function by 
	\begin{equation}\label{equation LC}
	L^*_f(\chi, s)^{(-1)^{n-1}}=\prod\limits_{j=0}^{n}C^*_f(\chi, {p^{m(f)j}}s)^{(-1)^j\binom{n}{j}}.
	\end{equation}
%	So it suffices to study the Newton polygon $\NP(f, \chi)_{C}$. 
	
	The power series $C^*_f(\chi, s)$ is genuinely the characteristic power series of a nuclear operator (or equivalently an infinite matrix $N$ with respect to some canonical basis). Moreover, we can do this for the universal character (as opposed to just finite characters) of the Galois group of the tower $\ZZ_p$.

	{Step 2}: We construct the improved Hodge polygon $\IHP(\Delta)$ for $\Delta$ in Definition~\ref{IHP}, and prove that it is a lower bound of $\NP(f, \chi)_{C}$ for every nontrivial finite character $\chi$. 
The polygon $\IHP(\Delta)$ lies above the usual Hodge polygon at $x=\mathbbm x_k^\pm$ for every $k\geq1$. In fact, we show in Proposition~\ref{IHPHP} the condition that $\IHP(\Delta)$ lies strictly above the usual Hodge polygon at $x=\mathbbm x_k^\pm$.

%when $p$ is non-ordinary with respect to $\Delta$. 
	
	The key point of this step lies in: the usual way of obtaining Hodge polygon is to conjugate $N$ by an appropriate diagonal matrix, and observe that each row is entirely divisible by a certain power of $p$. In this paper, we dig into the definition of characteristic power series as the sum over permutations, which allows us to slightly but crucially improve the usual Hodge polygon.

 {Step 3}: We show in Proposition~\ref{proposition 1} that for every polynomial $f\in \mathcal F(\Delta)$, if there is a finite character $\chi_0$ of conductor $p$ such that $\NP(f, \chi_0)_{C}$ coincides with $\IHP(\Delta)$ at $$x=\mathbbm x^\pm_k\quad \textrm{for}\quad 1\leq k\leq n+2,$$ then for every nontrivial finite character $\chi$, $\NP(f, \chi)_{C}$ and $\IHP(\Delta)$ coincide at $$x=\mathbbm x^\pm_k \quad \textrm{for all}\quad k\geq 0.$$

This proposition reduces the problem to show that all the polynomials $f\in \mathcal F(\Delta)$ such that $\NP(f, \chi_0)_{C}$ and $\IHP(\Delta)$ coincide at $x=\mathbbm x^\pm_k$ for $1\leq k\leq n+2$ form a Zariski open dense subset of $\mathcal F(\Delta).$
%	Obviously, this theorem is very powerful since its condition is only on a special character such that  $\NP(f, \chi_0)_{C}$ and $\IHP(\Delta)$ coincides at finite points, but its gaurentees $\NP(f, \chi)_{C}$ and $\IHP(\Delta)$ rouches an arbitrary nontrivial finite character 

For this, one may consider the characteristic power series for a ``universal'' polynomial $\widetilde{f}$, namely all coefficients of $\widetilde{f}$ are viewed as variables. We need to show that when we write $\widetilde u_{\mathbbm x_k^\pm}=\widetilde u_{\mathbbm x_k^\pm,h(\Delta_k^\pm)}T^{h(\Delta_k^\pm)}+O(T^{h(\Delta_k^\pm)+1}),$ the coefficients
\begin{equation}\label{1.4}
\widetilde u_{\mathbbm x_k^\pm,h(\Delta_k^\pm)}\not\equiv 0 \pmod{p}\quad \textrm{	for every} \quad 1\leq k\leq n+2.
\end{equation}

{Step 4}: We show \eqref{1.4} in \S\ref{section 5}.
The technical core of this paper lies in proving \eqref{1.4}. Roughly speaking,  the key is to show that for each $0\leq k\leq n+2$, a certain monomial of $\widetilde u_{\mathbbm x_k^\pm}$ is nonzero. Tracing back to the definition of $\widetilde u_{\mathbbm x_k^\pm}$, we see the contribution to such \emph{leading} monomial must come from a unique special permutation that appears in the definition of the characteristic power series. Computing explicitly the contribution of this special permutation to the leading term, which itself was subdivided into simpler cases, allows us to prove \eqref{1.4}.

%	
%	
%	Recently, a lot of meaningful and interesting results about the Artin--Schreier--Witt towers and their analogy come out. For example:
%	In \cite{kw}, Wan and Koster proved a genus bound of an
%	Artin-Schreier-Witt towers. In \cite{hae}, Haessig provided estimates for the $p$-adic valuations of the roots of $L$-functions associated to symmetric powers of Kloosterman sums; more recently, Zhu stated a similar result to this paper in \cite{zhu} when the polytope of $f$ is a rectangular. 

	\subsection*{Acknowledgments}
	The author would like to thank his advisor Liang Xiao for the many help and support on this paper. The author is grateful for the anonymous referee to many useful comments to improve the presentation. He also thanks Douglas Haessig, Hui June Zhu, and Daqing Wan for helpful discussions.

\section{Dwork's trace formula}\label{section 2} 
In this section, we will introduce Dwork trace formula to express $C^*_f(\chi,s)$ as the characteristic power series of some infinite matrix and deduce a natural lower bound of $\NP(T,f)_C$ called the improved Hodge polygon.

Recall from introduction that $\Delta$ is an $n$-dimensional paralleltope generated by linearly independent vectors $\overrightarrow{\calO \mathbf V_1},\overrightarrow{\calO \mathbf V_2},\dots,\overrightarrow{\calO \mathbf V_n}$.
\begin{notation}\label{cone}
	We denote the \emph{cone} of $\Delta$ by 
	$$\Cone(\Delta):=\Big\{Q\in \RR^n\;\big |\; kQ\in \Delta\ \textrm{for some}\ k>0 \Big\},$$ and put $$\MM(\Delta):=\Cone(\Delta)\cap \ZZ^n$$ to be the set of integer point points in $\Cone(\Delta)$.
	
	\end{notation}

\begin{definition}\label{definition of D}
	Let $D$ be the smallest positive integer such that \begin{equation}\label{equation w}
	\MM(\Delta)\subset\Big\{z_1 \mathbf{V}_1+z_2\mathbf{V}_2+\cdots+z_n\mathbf{V}_n\;\Big|\; z_i\in \frac{1}{D}\ZZ_{\geq 0}\Big\}.
	\end{equation}
	In particular, when $\Delta$ is an $n$-cube with side length $d$, we have $D = d$.
\end{definition}
%	It is easy to check that
%$D\;\big|\; \Vol(\Delta).$

\begin{notation}
	Let $$\Lambda_{\Delta}:=\bigoplus_{i=1}^n  \ZZ_{\geq 0}\cdot\mathbf V_i\subset \MM(\Delta).$$
\end{notation}

%	For $j=1,\cdots,n$ and a face $\tau$ of $\Delta$ not containing $\calO$, we write
%$$\overline{_jf}^{\tau}=\sum
%\limits_{i=0}^{m-1}\sum\limits_{p^{m-i-1}P\in\tau}P_ja_{iu}^{p^{m-i-1}}x^{p^{m-i-1}P},$$
%where $u_j$ is the $j$-th coordinate of $u$. 
%
%\begin{definition}\label{nondegenerate}
%	We call $f$
%non-degenerate with respect to $\Delta_f$ if
%$\Delta_f$ is of dimension $n$, and for every face
%$\tau$ of $\Delta_f$ that does not contain $\calO$, the system
%$\overline{_1f}^{\tau} =\cdots=\overline{_nf}^{\tau}$ has no
%common solution in $(\overline{\mathbb{F}}_q^{\times})^n$. We refer readers to \cite[\S 1]{liu-wei} for more details. 
%\end{definition}

%Recall that $\Delta^\pm_k$, in Notation~\ref{??}, are subsets of $\MM(\Delta)$. 
	
From now on, let $p$ be a prime satisfying Hypothesis~\ref{hypothesis}, and let $\calF(\Delta)$ denote the set of all non-degenerate polynomials
$f (\ux)= \sum\limits_{P \in \Delta^+} a_P  \ux^P\in \overline \FF_p[\ux]$ with $\Delta_f =\Delta$. We denote by $\FF_p(f)$ the \emph{coefficient field} of $f$ which is the finite field generated by the coefficients of $f$.

We will fix such a polynomial $f\in \calF(\Delta)$ in \S\ref{section 2} and \S\ref{section 3}. We set $\FF_q:=\FF_p(f)$ and $m=[\FF_q:\FF_p]$. 
Let $\hat a_{P} \in \ZZ_q$ be the Teichm\"uller lift of $a_{P}$. We call $\hat f(\ux):=\sum\limits_{P\in \Delta^+} \hat a_{P} \ux^P$ the \emph{Teichm\"uller lift} of $f(\ux)$.

%All the results for $f$ can be applied analogously to any polynomial of convex hull $\Delta_f$ and finite coefficient field.

%We normalized the $p$-adic valutions $\val_p(-)$ (resp. $\val_{q}(-)$) so that $\val_{p}(p)=1$ (resp. $\val_{q}(q)=1$).

%Moreover, for those simplex $$\Delta:=\left\{\theta_{0}u_{0}+\dots +\theta _{n}u_{n}\bigg|\theta_{i}\geq 0,0\leq i\leq n,\sum_{i=0}^{n}\theta_{i}=1\right\},$$ where $u_i\in \ZZ^n$, we denote $\square_{\Delta}:=\left\{\theta _{0}u_{0}+\dots +\theta _{n}u_{n}|0\leq\theta _{i}\leq 1\right\}.$

\subsection{$T$-adic exponential sums.}
%We fix the polynomial $\overline f$ and its Teichm\"uller lift $f$ as in the introduction.

\begin{notation}\label{c}
	We recall that the \emph{Artin--Hasse exponential series} is defined by
	\begin{equation}\label{Artin-Hasse}
	\begin{split}
		E(\pi):=\sum_{i=0}^{\infty}c_i\pi^i = \exp\big( \sum_{i=0}^\infty \frac{\pi^{p^i}}{p^i} \big)
		 \in 1+ \pi + \pi^2 \ZZ_p[\![ \pi ]\!].
	\end{split}
	\end{equation}
	
	Setting $ E(\pi)= 1+T$ gives an isomorphism $\ZZ_p\llbracket\pi \rrbracket \cong \ZZ_p\llbracket T\rrbracket$. 
\end{notation}

	\begin{definition}\label{T-adic valuation}
		For a ring $R$ and a power series $g \in R\llbracket T\rrbracket$, we define its \emph{$T$-adic valuation}, denoted by $\val_T(g)$, as the largest integer $k$ such that $g\in T^kR\llbracket T\rrbracket$.
	\end{definition}

%	For the technical reason, since there is no dif and only iference for $T$-adic valuation and $\pi$-adic valuation, we sometimes do not distinguish them.

\begin{definition}	
	For every $k\geq 1$, the \emph{$T$-adic exponential sum} of $f$ over $\FF_{q^k}^\times$ is
	\[
	S_f^*(k, T): = \sum_{\ux\in (\FF_{q^k}^\times)^n} (1+T)^{\Tr_{\QQ_{q^k} / \QQ_p}(\hat{f}(\hat{\ux}))} \in \ZZ_p[\![T]\!].
	\]
	\begin{definition}
	The \emph{$T$-adic characteristic power series} of $f$ is defined by 
	\begin{equation}
		\label{E:Cfstar}
		C_f^*(T,s) := \exp \Big( \sum_{k=1}^\infty -(q^k-1)^{-n} S_f^*(k,T)\frac{s^k}{k} \Big)=
\sum_{k=0}^\infty u_{k}( T) s^k \in \ZZ_p\llbracket T, s\rrbracket.
	\end{equation}

			\end{definition}
	
	It is not difficult to check that every nontrivial finite character $\chi: \ZZ_{p} \to \CC_p^\times$ satisfies
	\begin{equation}\label{specialization}
		C_f^*(\chi,s) =C_f^*(T,s)\big|_{T = \chi(1)-1},
	\end{equation}
where $C_f^*(\chi,s)$ is defined in \S\ref{section 1}. We refer the readers to \cite[\S 2]{Davis-wan-xiao} for the proof.
	
\end{definition}

\begin{notation}
 We put
	\begin{equation}
	E_f(\ux):= \prod\limits_{P\in \Delta^+} E(\hat{a}_{P} \pi  \ux^P)
	 \in \ZZ_q\llbracket T\rrbracket \llbracket \ux \rrbracket
	\end{equation}
 and 
		\begin{equation}	\label{E:Ef(x)}
	\prod\limits_{P\in \Delta^+\backslash \{\calO\}} E(\hat{a}_{P} \pi  \ux^P)
	=\sum\limits_{Q\in \ZZ^n_{\geq 0}} e_{Q}(T) \ux^Q\in \ZZ_q\llbracket T\rrbracket \llbracket \ux \rrbracket.
		\end{equation}
	
	We shall later in \S\ref{section 4} and \S\ref{section 5} need a version of $E_f(\ux)$ for a universal polynomial $\widetilde f(\ux)$. Namely, we consider the universal polynomial $$\widetilde{f}(\ux)=\sum_{P\in \Delta^+}\widetilde a_{P} \ux^P\in \FF_p[\tilde a_P; P \in \Delta^+][\ux],$$ where $\widetilde a_{P}$ are treated as variables. Then we put
	\begin{equation}	\label{equation: E Delta}
	\begin{split}
	\prod\limits_{P\in \Delta^+\backslash \{\calO\}} E(\widetilde a_{P} \pi  \ux^P)
	=\sum\limits_{Q\in \ZZ^n_{\geq 0}} \widetilde e_{Q}(T) \ux^Q\in \ZZ_p[\widetilde{a}_P; P\in \Delta^+\backslash \{\calO\}] \llbracket T\rrbracket \llbracket \ux \rrbracket.
	\end{split}
	\end{equation}	
%	If $\tau$ denotes arithmetic $p$-Frobenius automorphism which acts naturally on $\QQ_q$, and  trivially on $\pi$ and $x$, then we have,  for every $j \in \ZZ_{\geq 0}$, 
%	\[
%	E_f^{\tau^j}(x)_\pi = \prod\limits_{i=0}^d E(a_i^{\tau^j} \pi x^i) \in \ZZ_q[\![T]\!] [\![ x ]\!].\]
\end{notation}

%\begin{convention}
%	\label{Conv:matrices start with zero}
%	In this paper, the row and column indices of matrices start with zero.
%\end{convention}

\subsection{Dwork's trace formula}\label{section 2.2}

\begin{definition}\label{Banach space}
We fix a $D$-th root $T^{1/D}$ of $T.$ Define
$$\textbf{B}=\Big\{\sum\limits_{Q\in \MM(\Delta)}b_{Q} \ux^Q\;\Big|\;b_{Q}\in \ZZ_q\llbracket T^{1/D}\rrbracket, \val_T(b_{Q})\to +\infty, \textrm{when}\ Q\to \infty\Big\}.$$
\end{definition}
Let $\psi_p$ denote the operator on $\mathbf{B}$ such that
$$\psi_p\Big(\sum\limits_{Q\in \MM(\Delta)}b_{Q} \ux^Q\Big): = \sum\limits_{Q\in \MM(\Delta)}b_{pQ} \ux^Q.$$

\begin{definition}
	Define
	\begin{equation}
	\label{E:psi}
	\psi := \sigma^{-1}_{\Frob}\circ\psi_p \circ E_{f}(\ux): \mathbf{B} \longrightarrow \mathbf{B}
	\end{equation}
		where $\sigma_\Frob$ represents the arithmetic Frobenius acting on the coefficients, and 
	$E_{f}(\ux)(g):=E_{f}(\ux)\cdot g$ for every $g\in \mathbf{B}$.
\end{definition} 

	Note that its $k$-th iterate satisfies
$$\psi^k=\sigma_{\Frob}^{-k}\circ\psi_p^k\circ \prod_{i=0}^{k-1}E_f^{\sigma_{\Frob}^i}(x_1^{p^i}, x_2^{p^i},\dots,x_n^{p^i}).$$

\begin{lemma}\label{N}
	Let $N$ be the matrix of $\psi$ acting on $\mathbf B$ with respect to the basis $ \{\ux^Q\}$, then the entries $$N_{ Q',Q}=E(\hat{a}_\calO\pi)e_{pQ'-Q}(T),$$
	where $e_{pQ'-Q}(T)$ is defined in \eqref{E:Ef(x)}.
\end{lemma}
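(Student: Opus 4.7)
The plan is a direct unwinding of the definition of $\psi$ on the basis element $x^Q$ for $Q \in \MM(\Delta)$, reading off the coefficient of $x^{Q'}$.

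First, I would factor $E_f(\underline x)$ by isolating the constant-in-$\underline x$ piece: using the definition and the expansion \eqref{E:Ef(x)},
\[
E_f(\underline x) \;=\; E(\hat a_\calO \pi) \cdot \!\!\!\prod_{P \in \Delta^+ \setminus \{\calO\}}\!\!\! E(\hat a_P \pi x^P) \;=\; E(\hat a_\calO \pi) \sum_{R} e_R(T)\, x^R.
\]
Because every $P$ appearing in the product lies in $\Delta \subset \Cone(\Delta)$ and $\MM(\Delta)$ is closed under addition, the exponents $R$ with $e_R(T)\neq 0$ all lie in $\MM(\Delta)$. Multiplying by $x^Q$ and substituting $S=Q+R$ gives
\[
E_f(\underline x) \cdot x^Q \;=\; E(\hat a_\calO \pi) \sum_{S \in Q+\MM(\Delta)} e_{S-Q}(T)\, x^S.
\]

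Next I would apply $\psi_p$, which by Definition keeps only the terms with $p \mid S$ and then divides the exponent by $p$. Writing $S=pQ'$, this yields
\[
\psi_p\bigl(E_f(\underline x)\cdot x^Q\bigr) \;=\; E(\hat a_\calO \pi) \sum_{Q' \in \MM(\Delta)} e_{pQ'-Q}(T)\, x^{Q'},
\]
the summation being effectively restricted to $Q'$ with $pQ'-Q \in \MM(\Delta)$ (elsewhere the coefficient vanishes). Applying $\sigma_{\Frob}$ to the coefficients (it fixes $\pi$ and $T$ and acts as $\hat a_P \mapsto \widehat{a_P^p}$ on the Teichm\"uller lifts appearing in $e_{pQ'-Q}(T)$ and in $E(\hat a_\calO\pi)$) produces precisely the entry of $N$ in position $(Q',Q)$, which is the asserted formula.

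There is essentially no obstacle here beyond bookkeeping; the only point that requires a word of justification is that $\psi$ preserves $\mathbf{B}$, i.e., that for fixed $Q$ the series $\sum_{Q'} e_{pQ'-Q}(T)\, x^{Q'}$ really belongs to $\mathbf{B}$. This follows from the $T$-adic convergence of the Artin--Hasse exponential: each $e_R(T)$ lies in $\pi^{|R|/D}\ZZ_q\llbracket T^{1/D}\rrbracket$ after suitable normalization (built into Definition~\ref{Banach space}), so $\val_T\bigl(e_{pQ'-Q}(T)\bigr)\to\infty$ as $Q'\to\infty$ in $\MM(\Delta)$. With this in place the computation above is unambiguous, and the lemma follows.
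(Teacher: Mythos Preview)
Your argument is correct and is essentially the same computation as the paper's: factor out $E(\hat a_\calO\pi)$, multiply by $x^Q$, apply $\psi_p$ to pick off the terms with exponent $pQ'$, and read off the coefficient. The paper omits your side remarks about $\sigma_{\Frob}$ and about $\psi$ preserving $\mathbf B$; note that the stated formula $N_{Q',Q}=E(\hat a_\calO\pi)e_{pQ'-Q}(T)$ is literally what one gets \emph{before} applying $\sigma_{\Frob}$ (the paper's own proof stops at $\psi_p\circ E_f$), so your final sentence claiming that applying $\sigma_{\Frob}$ ``produces precisely'' this formula is slightly off---the Frobenius twist is absorbed elsewhere in the paper (cf.\ the appearance of $\sigma_{\Frob}^j(N)$ in \eqref{E:expression of char power series}).
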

\begin{proof}
From 
\begin{multline*}
	\psi_p \circ E_{f}(\ux)\big( \ux^Q\big) =\psi_p\Big( E(\hat{a}_\calO\pi)\sum_{Q''\in \MM(\Delta)} e_{Q''}(T) \ux^{Q''+Q}\Big)\\
= E(\hat{a}_\calO\pi)\sum_{\substack{Q''\in \MM(\Delta)\\Q+Q''=pQ'}} e_{Q''}(T)  \ux^{Q'}= \sum_{Q'\in \MM(\Delta)} E(\hat{a}_\calO\pi)e_{pQ'-Q}(T)  \ux^{Q'},
\end{multline*}
	 we complete the proof.
\end{proof}

%Explicitly, the matrix of $\psi$ with respect to the basis $\Gamma:=\{1,x,x^2,\dots\}$ is given by
%\begin{equation}
%	\label{E:explicit N}
%	N=\big( e_{mp-n}\big)_{m,n\geq 0} =\begin{pmatrix} 
%		e_0&0&\cdots&0& 0 &\cdots &0&\cdots\\
%		e_p & e_{p-1} & \cdots & e_0 &  0  & \cdots  & 0 & \cdots\\
%		e_{2p} & e_{2p-1} & \cdots & e_p & e_{p-1}  & \cdots & e_0 & \cdots\\
%		\vdots & \vdots & \ddots & \vdots & \vdots & \vdots & \ddots  & \ddots\\
%		e_{mp} & e_{mp-1} & \cdots & e_{mp-p} & e_{mp-p-1} & \cdots & e_{mp-2p} & \cdots\\
%		\vdots & \vdots & \ddots & \vdots & \vdots & \ddots & \vdots & \ddots
%	\end{pmatrix}.
%\end{equation}
%
%The operator $\tau^{-1}\circ \psi$ is $\tau^{-1}$-linear, but its $a$-th iteration $(\tau^{-1}\circ \psi)^a$ is linear since 
%$\tau^a$ acts trivially on $\Zp\llbracket \pi\rrbracket$. For the same reason, $\tau^a(N) =N$. 
Recall that $m = [\FF_q:\FF_p]$ and $C_f^*(T,s)=\sum\limits_{k=0}^\infty u_{k}( T) s^k \in \ZZ_p\llbracket T, s\rrbracket.$

\begin{theorem}[Dwork Trace Formula]
	
	For every integer $k\geq 1$, we have
	$$(-1)^{n-1}(q^k-1)^{n}S_f^*(k,\underline{\pi})=\Tr_{\mathbf{V}/\ZZ_q[\![ \pi]\!]}\big(\psi^{mk}\big).$$
\end{theorem}
\begin{proof} See \cite[Lemma~4.7]{liu-wan}.  
\end{proof}
We refer \cite{wan} for a thorough treatment of the universal Dwork trace formula.

%\begin{theorem}[Dwork Trace Formula]
%	For every integer $k>0$, we have
%	$$(q^k-1)^{-n}S_f^*(k,T)=\Tr_{\mathbf{B}/\ZZ_q[\![ \pi]\!]}\big(\psi^{mk}\big).$$
%\end{theorem}
%\begin{proof} This was proved by \cite[Lemma~4.7]{liu-wei}.  
%\end{proof}
%One can see \cite{wan} for a a thorough treatment of the universal Dwork trace formula. 

\begin{theorem}[Analytic trace formula]\label{determinant}
	The theorem above has an equivalent multiplicative form:
	\begin{equation}\label{dwork}
	\begin{split}
		C_f^*(T, s)=& \det\big(I-s \psi^m  \;|\; \mathbf{B}/\ZZ_q[\![ \pi]\!] \big).
	\end{split}
	\end{equation}
\end{theorem}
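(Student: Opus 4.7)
The plan is to invoke the Fredholm identity for nuclear operators,
$$
\det(I-sA) \;=\; \exp\Bigl(-\sum_{k\ge 1}\Tr(A^{k})\,\tfrac{s^{k}}{k}\Bigr),
$$
applied to $A=\psi^{m}$. Taking logarithms and comparing coefficients with the definition \eqref{E:Cfstar} of $C_{f}^{\ast}(T,s)$, the theorem reduces to the additive Dwork trace formula
$$
(q^{k}-1)^{n}\,\Tr\bigl(\psi^{mk}\,\big|\,\mathbf{B}\bigr) \;=\; S_{f}^{\ast}(k,T)\qquad(k\ge 1).
$$
A preliminary task is to verify that $\psi^{m}$ is nuclear as a $\ZZ_{q}[\![\pi]\!]$-linear endomorphism of $\mathbf{B}$; this follows from Lemma~\ref{N} together with the $T$-adic decay of the Artin--Hasse coefficients $e_{Q}(T)$ from \eqref{E:Ef(x)}, since each factor $E(\hat a_{P}\pi x^{P})\equiv 1\pmod{\pi}$ for $P\neq\calO$.

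To establish the additive formula I would first iterate the definition of $\psi$ to get
$$
\psi^{mk} \;=\; \psi_{p}^{mk}\circ F_{mk}, \qquad F_{mk}\;:=\;\prod_{i=0}^{mk-1} E_{f}^{\sigma_{\Frob}^{i}}\bigl(x_{1}^{p^{i}},\dots,x_{n}^{p^{i}}\bigr),
$$
using that $\sigma_{\Frob}^{mk}=\mathrm{id}$ on $\ZZ_{q}$. Next I would compute the diagonal entries of $\psi_{p}^{mk}\circ F_{mk}$ in the monomial basis $\{x^{Q}\}_{Q\in\MM(\Delta)}$ via the contraction $\psi_{p}^{mk}(x^{Q+Q'})=x^{(Q+Q')/p^{mk}}$ when $p^{mk}\mid Q+Q'$ and $0$ otherwise: the $(Q,Q)$-entry is the coefficient of $x^{(q^{k}-1)Q}$ in $F_{mk}$, so
$$
\Tr(\psi_{p}^{mk}\circ F_{mk}) \;=\; \sum_{Q\in\MM(\Delta)} [x^{(q^{k}-1)Q}]\,F_{mk}(\underline x).
$$
Character orthogonality for $\mu_{q^{k}-1}^{n}$---whose elements are precisely the Teichm\"uller lifts of $(\FF_{q^{k}}^{\times})^{n}$---then collapses this to
$$
\Tr(\psi^{mk}) \;=\; \frac{1}{(q^{k}-1)^{n}}\sum_{\hat{\underline x}\in(\FF_{q^{k}}^{\times})^{n}} F_{mk}(\hat{\underline x}),
$$
producing the desired $(q^{k}-1)^{-n}$ prefactor.

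To conclude I would identify $F_{mk}(\hat{\underline x})$ with $(1+T)^{\Tr(\hat f(\hat{\underline x}))}$. Using the Frobenius identity $\hat a_{P}^{\sigma_{\Frob}^{i}}\hat{\underline x}^{p^{i}P}=(\hat a_{P}\hat{\underline x}^{P})^{p^{i}}$ on $\ZZ_{q^{k}}$, expanding $E(\pi)=\exp\bigl(\sum_{j\ge 0}\pi^{p^{j}}/p^{j}\bigr)$, and reindexing $(i,j)\mapsto (i+j\bmod mk,\,j)$, the inner sum becomes a full Frobenius trace:
$$
\prod_{i=0}^{mk-1} E\bigl((\hat a_{P}\hat{\underline x}^{P})^{p^{i}}\pi\bigr) \;=\; E(\pi)^{\Tr_{\QQ_{q^{k}}/\QQ_{p}}(\hat a_{P}\hat{\underline x}^{P})} \;=\; (1+T)^{\Tr(\hat a_{P}\hat{\underline x}^{P})}.
$$
Taking the product over $P\in\Delta^{+}$ yields $(1+T)^{\Tr(\hat f(\hat{\underline x}))}$, matching the summand in the definition of $S_{f}^{\ast}(k,T)$. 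The main obstacle will be the orthogonality/character-sum step that produces the precise $(q^{k}-1)^{-n}$ factor: one must check that the $n$-fold geometric manipulation genuinely converges $T$-adically on the basis indexed by the cone $\MM(\Delta)$, and that the contribution from the vertex $\calO$ is accounted for correctly (this is the role of the quotient by $\ZZ_{q}[\![\pi]\!]$ in the formulation). The Artin--Hasse collapse and the Fredholm conversion are then formal once that trace identity is secured.
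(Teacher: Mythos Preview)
The paper does not give its own proof here; it simply cites \cite[Theorem~4.8]{liu-wei}. Your outline is correct and is exactly the standard Dwork argument that underlies that reference: Fredholm identity, reduction to the additive trace identity $(q^{k}-1)^{n}\Tr(\psi^{mk})=S_{f}^{\ast}(k,T)$, iteration of $\psi$, extraction of diagonal coefficients, character orthogonality over $\mu_{q^{k}-1}^{n}$, and the Artin--Hasse collapse $\prod_{i} E((\hat a_{P}\hat{\underline x}^{P})^{p^{i}}\pi)=(1+T)^{\Tr(\hat a_{P}\hat{\underline x}^{P})}$.

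One clarification on your final remark: in the displayed formula the notation $\mathbf{B}/\ZZ_{q}[\![\pi]\!]$ means ``$\mathbf{B}$ viewed as a $\ZZ_{q}[\![\pi]\!]$-module'' (the base ring over which the Fredholm determinant is taken), not a quotient. So there is no separate bookkeeping for the vertex $\calO$: the trace sum runs over all $Q\in\MM(\Delta)$ including $Q=\calO$, and the $(\calO,\calO)$ diagonal entry is just the constant term of $F_{mk}$, already covered by your formula. The $T$-adic convergence concern is genuine but mild: since $e_{Q}(T)\to 0$ $T$-adically as $Q\to\infty$ (Lemma~\ref{L:estimate of Ef(x)}), the coefficients of $F_{mk}$ inherit the same decay and the character-sum rearrangement is justified termwise in the $T$-adic topology.
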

\begin{proof}
	It follows from Dwork trace formula. For proof, see \cite[Theorem~4.8]{liu-wan}.
	%	It follows from the following list of equalities 
	%	\begin{align*}
	%		C_f^*(\underline{\pi}, s) =& \exp \big( \sum_{k=1}^\infty \frac 1{1-p^k} S^*(k,\underline{\pi})\frac{s^k}{k} \big)  \\
	%		=&\exp \big( \sum_{k=1}^\infty {-\Tr_{\textbf{B}/\Zp[\![ \underline{\pi}]\!]}((\tau^{-1}\circ\psi)^{ak})}\frac{s^k}{k} \big) \\
	%		=& \det  \big(I-(\tau^{-1}\circ\psi)^{a}s\;\big|\;\mathbf{B}\big)\\
	%		=& \det\big(I-s \tau^{-1}(N) \tau^{-2}(N) \cdots \tau^{-a}(N)\big)
	%		\\
	%		=& \det \big(I-s \tau^{a-1}(N) \cdots \tau(N)N\big). \qedhere
	%	\end{align*}
	%\liang{The previous product expression is problematic because these operators do not commute with one another.}
\end{proof}

\begin{definition}
	The \emph{normalized Newton polygon} of $C_f^*(T, s)$, denoted by $\NP(f, T)_C$, is the lower convex hull of the set of points $\Big\{\left(i,\frac{\val_T(u_{i}(T))}{m}\right)\Big\}$.
\end{definition}

Now we recall the weight function and the usual Hodge polygon.
\begin{definition}\label{weight function}
	For an integer point $Q$ in $\ZZ^n$, assume that the line $\overline{OQ}$ intersects some surface of $\Delta$ at a point $Q'$. Then
	we define \emph{the weight} of $Q$ by the dilation $d\in \QQ$ such that $\overrightarrow{OQ}=d\cdot \overrightarrow{OQ'}$, and denote it by $w(Q)$.
	Note that $w(Q)$ is negative if $\overrightarrow{OQ}$ and $\overrightarrow{OQ'}$ have opposite directions.

	% 
	% For a multiset $S^\star$ of points in $\Cone(\Delta)$, we define 
	%	$m(S^\star)=$
\end{definition}

\begin{definition}\label{notation HP}
	Let $\WW_\ell$ denote a set consisting of $\ell$ elements of $\MM(\Delta)$ with minimal weights. The usual Hodge polygon, denoted by $\HP(\Delta)$, is the lower convex hull of 
	$$\Big\{\Big(\ell, \sum_{Q\in \WW_\ell}(p-1)w(Q)\Big)\Big\}.$$
\end{definition}

\begin{proposition}
	The Newton polygon $\NP(f, T)_C$ lies on or above $\HP(\Delta)$.
\end{proposition}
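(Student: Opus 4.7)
The plan is to combine Dwork's trace formula with a diagonal conjugation and the Cauchy--Binet formula. By Theorem~\ref{determinant}, $C_f^*(T,s) = \det(I - s\psi^m \mid \mathbf{B}/\ZZ_q\llbracket\pi\rrbracket)$, so $u_k(T) = (-1)^k \sum_{|S|=k} \det((\psi^m)_{S,S})$, summed over $k$-subsets $S \subset \MM(\Delta)$. Expanding the product \eqref{E:Ef(x)}, each monomial contributing to $e_Q(T)$ has the form $\prod c_{n_P}\hat a_P^{n_P}\pi^{\sum n_P}$ with $\sum n_P P = Q$ and $P \in \Delta^+\setminus\{\calO\}$. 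Since $w(P) \leq 1$ on $\Delta^+$, subadditivity of $w$ gives $\sum n_P \geq w(\sum n_P P) = w(Q)$, and by Lemma~\ref{N} this yields
\[
\val_\pi(N_{Q',Q}) \;\geq\; w(pQ'-Q) \;\geq\; p\,w(Q') - w(Q),
\]
where the second inequality uses $w(pQ') \leq w(pQ'-Q) + w(Q)$.

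Next I would conjugate by $\Pi := \mathrm{diag}(\pi^{-w(Q)})$; since the entries of $\Pi$ lie in $\ZZ_p\llbracket\pi\rrbracket$, $\Pi$ is fixed by $\sigma_\Frob$ and conjugation preserves the characteristic polynomial. The conjugated matrix $\widetilde N := \Pi N \Pi^{-1}$ satisfies the uniform row bound
\[
\val_\pi(\widetilde N_{Q',Q}) \;\geq\; (p-1)\,w(Q').
\]
Because $\psi$ is $\sigma_\Frob$-semilinear, a short induction shows the $\ZZ_q$-linear operator $\psi^m$ has matrix $N \cdot N^\sigma \cdots N^{\sigma^{m-1}}$ in the basis $\{x^Q\}_{Q\in\MM(\Delta)}$, and conjugation by $\Pi$ (which commutes with $\sigma_\Frob$) turns this into $\widetilde N \cdot \widetilde N^\sigma \cdots \widetilde N^{\sigma^{m-1}}$; each factor still satisfies the row bound, since $\sigma_\Frob$ preserves $\val_\pi$.

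Applying the Cauchy--Binet formula to each principal minor then gives
\[
\det\bigl((\widetilde N\,\widetilde N^\sigma \cdots \widetilde N^{\sigma^{m-1}})_{S,S}\bigr)
\;=\; \sum_{S_1,\ldots,S_{m-1}} \prod_{i=0}^{m-1} \det\bigl(\widetilde N^{\sigma^i}_{S_i,S_{i+1}}\bigr),
\]
with $S_0 = S_m = S$ and the $S_i$ ranging over all $k$-subsets of $\MM(\Delta)$. Any bijection inside $\det(\widetilde N^{\sigma^i}_{S_i,S_{i+1}})$ has $\val_\pi \geq (p-1)\sum_{R\in S_i} w(R)$, so each Cauchy--Binet term has $\val_\pi \geq (p-1)\sum_{i=0}^{m-1}\sum_{R\in S_i} w(R)$. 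Since $\sum_{R\in S_i} w(R) \geq \sum_{R\in\WW_k} w(R)$ by definition of $\WW_k$, the minimum of the lower bound (attained when each intermediate $S_i = \WW_k$) is $(p-1)\bigl[\sum_{R\in S} w(R) + (m-1)\sum_{R\in\WW_k} w(R)\bigr]$. Taking the further minimum over $S$ (attained at $S=\WW_k$) yields $\val_\pi(u_k(T)) \geq m(p-1)\sum_{R\in\WW_k} w(R)$, and dividing by $m$ (as in the normalization of $\NP(f,T)_C$) gives exactly the ordinate of $\HP(\Delta)$ at $x=k$.

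The main obstacle is propagating the row-valuation bound through the $m$-fold matrix product compatibly with the Frobenius twists; this is precisely where the factor $m$ in the bound appears, matching the $1/m$ normalization in the definition of $\NP(f,T)_C$. Every other step is a standard manipulation, and the positivity of $w$ on $\MM(\Delta)$ guarantees that no cancellation disrupts the lower bound extracted from Cauchy--Binet.
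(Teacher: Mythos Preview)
The paper itself does not give a proof; it simply cites \cite[Theorem~1.3]{liu-wei}. Your argument is the standard self-contained proof of this bound and is essentially correct: estimate $\val_\pi(e_Q)\geq w(Q)$ from the expansion of the Artin--Hasse product, diagonally twist to turn the entrywise bound into a uniform row bound $(p-1)w(Q')$, and then apply Cauchy--Binet to the $m$-fold Frobenius-twisted product to produce the factor $m$ matching the normalization. This is in fact the same skeleton the paper uses later (Proposition~\ref{IHP is below NPchi}) to prove the stronger $\IHP$ bound, except there one uses $\lfloor w(pQ')\rfloor-\lfloor w(Q)\rfloor$ in place of $pw(Q')-w(Q)$.

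One small inaccuracy to fix: you write that the entries of $\Pi=\diag(\pi^{-w(Q)})$ lie in $\ZZ_p\llbracket\pi\rrbracket$. They do not, for two reasons: the exponents $-w(Q)$ are negative, and $w(Q)\in\frac{1}{D}\ZZ$ need not be integral (cf.\ Remark~\ref{lamme property of w} and the choice of $T^{1/D}$ in Definition~\ref{Banach space}). This does not damage the argument---conjugation by an invertible diagonal matrix over $\ZZ_p(\!(\pi^{1/D})\!)$ still preserves the characteristic power series, and $\sigma_\Frob$ fixes $\pi$ because the relation $E(\pi)=1+T$ has $\ZZ_p$-coefficients---but the justification you gave for $\Pi$ being $\sigma_\Frob$-fixed should be rephrased accordingly.
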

\begin{proof}
	See \cite[Theorem 1.3]{liu-wei}.
\end{proof}

\begin{definition}
	We call $p$ \emph{ordinary with respect to $\Delta$} if $p\equiv 1\pmod{D}.$
\end{definition}
%\begin{definition} Recall that we have defined $L_f^*(\chi,s)$ and $C^*_f(\chi, s)$ for each finite character $\chi:\Zp\to \CC_p^\times$ in the introduction. 
%	\begin{itemize}
%		\item [(a)] The \emph{normalized Newton polygon} of $L^*_f(\chi, s)^{-1}:=\sum \val_ix^i$, denoted by $\NP(f, \chi)_{L^{-1}}$, is the lower convex hull of the set of points $\left(i,\val_q(\val_i)p^{m_\chi-1}(p-1)\right)$.
%		\item [(b)] Similarly to $\NP(f, \chi)_{L^{-1}}$, we write $\NP(f, \chi)_C$ for the lower convex hull of the set of points $\left(i,\val_q(u_i)p^{m_\chi-1}(p-1)\right)$ and call it the \emph{normalized Newton polygon} of $C^*_f(\chi, s)$.
%		
%	\end{itemize}
%	
%\end{definition}	

%\begin{definition}\label{generic Newton polygon}
%	The \emph{generic Newton polygon} of a two dimensional convex polytope $\Delta$ is defined by
%	$$\GNP(\Delta):=\inf\limits_{\substack{\chi: \Zp/p^{m_\chi}\Zp\to \CC_p^\times\\\Delta=\Delta}}\{\NP(f, \chi)_{L^{-1}}\}.$$
%\end{definition}

%Liu and Wan gave a lower bound of $\NP(f, s)_C$ in \cite{liu-wan}, which is called its \emph{Hodge bound}. The main contribution of this paper is giving an improved Hodge bound and showing that it agrees with the $\GNP(\Delta)$ at infinite many points under some hypothesis.

\section{The improved Hodge polygon}\label{section 3} 
As we have explained in \S\ref{section 2} that for $f\in \mathcal F(\Delta)$, the Newton polygon $\NP(f,T)_C$ lies on or above the Hodge polygon introduced in Definition~\ref{notation HP}. However, unless $p$ is ordinary with respect to $\Delta$, this Hodge polygon is not expected to be sharp. In this section, we introduce an improved Hodge polygon which is again a lower bound of $\NP(f,T)_C$, and we prove that for a generic $f$, our improved Hodge polygon coincides with $\NP(f,T)_C$ at infinitely many points.

\begin{definition}\label{IHP}
	The \emph{improved Hodge polygon} of $\Delta$, denoted by $\IHP(\Delta)$, is the lower convex hull of the set of points 
	\begin{equation}\label{IHPP}
	\Big\{\Big(\ell,\sum_{Q\in \WW_\ell}(\big\lfloor w(pQ)\big\rfloor-\big\lfloor w(Q)\big\rfloor)\Big)\Big\},
	\end{equation}
	 where $\WW_\ell$ consists of $\ell$ elements of $\MM(\Delta)$ with minimal weights as in Definition~\ref{notation HP}.
\end{definition}

We will later give a simplified expression of this polygon at some particular points.

\begin{remark}\label{lamme property of w}
 Each point $Q \in \MM(\Delta)$ can be written as a rational linear combination $\sum\limits_{i=1}^n z_i \mathbf V_i$ of the basis vectors. It is straightforward to see that \begin{equation}\label{equation w1}
 w(Q) =\max\limits_{1\leq i\leq n}\{z_i\}.
 \end{equation} In particular, since each $z_i \in \frac 1D \ZZ$, we have $w(Q) \in \frac 1D \ZZ$  for every $Q\in \MM(\Delta)$.
 
		Moreover, the weight function is subadditive, namely, for every two points $Q_1, Q_2\in \MM(\Delta)$, we have
		\begin{equation}\label{inequality for w}
		w(Q_1)+w(Q_2)\geq w(Q_1+Q_2).
		\end{equation}
\end{remark}

\begin{proposition}\label{IHPHP}\hfill
\begin{enumerate}
	\item 	The improved Hodge polygon $\IHP(\Delta)$ lies on or above $\HP(\Delta)$ at $x=\mathbbm x_k^\pm$ for every $k\geq 1$.
	\item If there is a point $P_0=\sum\limits_{i=1}^n r_i\mathbf V_i\in \Delta^-$ and $j_1,j_2\in \{1,2,\dots,n\}$ such that \begin{enumerate}
		\item $r_{j_1}<r_{j_2}$, and
		\item $pr_{j_1}-\lfloor pr_{j_1}\rfloor>pr_{j_2}-\lfloor pr_{j_2}\rfloor$,
	\end{enumerate} then 
	 $\IHP(\Delta)$ lies strictly above $\HP(\Delta)$ at $x=\mathbbm x_k^\pm$ for every $k\geq 2$.
	 	\item 	If there is a point $P_0=\sum\limits_{i=1}^n r_i\mathbf V_i\in \Delta^-$ such that 
	 	$$\Big\{j\;\Big|\; r_j=\max\limits_{1\leq i\leq n}(r_i)\Big\}\cap \Big\{j\;\Big|\; pr_j-\lfloor pr_j\rfloor=\max\limits_{1\leq i\leq n}(pr_i-\lfloor pr_i\rfloor)\Big\}=\emptyset,$$
	 	then 
	 	$\IHP(\Delta)$ lies strictly above $\HP(\Delta)$ at $x=\mathbbm x_k^\pm$ for every $k\geq 1$.
\end{enumerate}
\end{proposition}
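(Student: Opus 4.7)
The plan is to reduce the comparison $\IHP(\Delta)-\HP(\Delta)$ to an inequality about fractional weights. By Remark~\ref{lamme property of w} we have $w(pQ)=pw(Q)$, so a short computation yields the per-point identity
\[
\bigl(\lfloor pw(Q)\rfloor-\lfloor w(Q)\rfloor\bigr)-(p-1)w(Q)=\{w(Q)\}-\{pw(Q)\},
\]
which vanishes whenever $w(Q)\in\ZZ$. Grouping the remaining contributions by $\rho:=\{w(Q)\}\in\tfrac{1}{D}\ZZ\cap(0,1)$ and setting $M_\rho^{(k)}:=\#\{Q\in\Delta_k^\pm:\{w(Q)\}=\rho\}$, I obtain
\[
\IHP(\mathbbm{x}_k^\pm)-\HP(\mathbbm{x}_k^\pm)=\sum_\rho M_\rho^{(k)}\bigl(\rho-\{p\rho\}\bigr)=\sum_\rho\rho\bigl(M_\rho^{(k)}-M_{\sigma^{-1}(\rho)}^{(k)}\bigr),
\]
where $\sigma(\rho)=\{p\rho\}$ is a permutation of $\tfrac{1}{D}\ZZ\cap(0,1)$ since Hypothesis~\ref{hypothesis} forces $\gcd(p,D)=1$. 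In particular the values at $x=\mathbbm{x}_k^+$ and at $x=\mathbbm{x}_k^-$ agree, so it suffices to bound this quantity from below.

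For Part~(1), I would use the coset decomposition $\MM(\Delta)=\bigsqcup_{r\in\Delta_1^-}(r+\Lambda_\Delta)$ and observe that multiplication by $p$ descends to a permutation $\sigma_*$ of $\Delta_1^-$ whose effect on the fractional pattern $(\rho_1(r),\dots,\rho_n(r))$ of a representative $r$ is componentwise application of $\sigma$. Expressing $M_\rho^{(k)}$ as the sum of coset-wise counts and reorganising along $\sigma$-orbits, Abel summation turns the target quantity into a cyclic sum $\sum_i M_{\mu_i}^{(k)}(\mu_i-\sigma(\mu_i))$ per orbit. Non-negativity of each orbit sum follows from a rearrangement argument that pairs every ``drop'' $\mu_i>\sigma(\mu_i)$ in the orbit with the lattice points of the cosets whose patterns realise $\mu_i$ as an argmax weight; these are precisely the points that, under $\sigma_*$, would otherwise have been redistributed to the count at $\sigma(\mu_i)$.

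For Parts~(2) and~(3), I would pinpoint a specific coset whose contribution to an orbit sum is strictly positive. Under Part~(3), the disjointness of $\{j:r_j=\max r_i\}$ and $\{j:\{pr_j\}=\max\{pr_i\}\}$ forces an asymmetry: the point $P_0$ itself contributes at fractional weight $\max r_i$ through an index that is not an argmax of $(\{pr_i\})$, producing a strict gain that cannot be matched by a corresponding point in the $\sigma^{-1}$-predecessor coset, already at $k=1$. Under the weaker pairwise-inversion hypothesis of Part~(2), the analogous unmatched contribution only materialises after translating $P_0$ by $\mathbf V_{j_1}$, and such a translate first fits inside $\Delta_k^\pm$ at $k=2$, matching the threshold in the statement.

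The hard part will be making the orbit-wise rearrangement in Part~(1) rigorous, because $M_\rho^{(k)}$ is in general not monotone in $\rho$ (not even within a single $\sigma$-orbit), so a plain rearrangement inequality does not suffice; one must instead exploit the compatibility of $\sigma_*$ acting on $\Delta_1^-$ with both the fractional pattern and the weight function. Parts~(2) and~(3) will likewise require careful case analysis on which indices realise the two argmaxes and on the smallest translate of $P_0$ whose image sits inside $\Delta_k^\pm$.
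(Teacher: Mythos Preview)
Your per-point identity and the rewriting
\[
\IHP(\mathbbm{x}_k^\pm)-\HP(\mathbbm{x}_k^\pm)=\sum_\rho M_\rho^{(k)}\bigl(\rho-\sigma(\rho)\bigr)
\]
are correct and give a clean formulation of what has to be shown. But the proposal does not actually contain a proof of the inequality in Part~(1): you yourself flag that $M_\rho^{(k)}$ is not monotone along $\sigma$-orbits, and the ``rearrangement argument that pairs every drop \dots'' is a hope rather than an argument. Grouping by the single fractional value $\rho=\{w(Q)\}$ discards the full coordinate pattern $(\{z_1\},\dots,\{z_n\})$ of $Q$, and it is that finer data, not just the maximum, that governs how the counts $M_\rho^{(k)}$ redistribute under $\sigma$. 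There is no evident way to recover non-negativity orbit by orbit from the coarse counts $M_\rho^{(k)}$ alone.

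The paper avoids this entirely by a different decomposition: rather than grouping by $\rho$, it groups $\Delta_k^-$ into translates $Q_1+\Delta^-$ with $Q_1\in\Lambda_\Delta$, and shows each translate contributes non-negatively. The mechanism is not rearrangement but \emph{subadditivity} of $w$ (Remark~\ref{lamme property of w}). Using the permutation $\eta$ of $\Delta^-$ one checks, for $Q=Q_1+P_0$, the three identities
\[
\lfloor w(Q)\rfloor+w(pQ)=w(pQ+Q_1),\qquad
\lfloor w(pQ)\rfloor=w\bigl(pQ-\eta(P_0)\bigr),
\]
together with $\sum_{P_0}w(Q_1+P_0)=\sum_{P_0}w(Q_1+\eta(P_0))$; substituting, each summand becomes $w(A)+w(B)-w(A+B)\geq 0$ with $A=pQ-\eta(P_0)$ and $B=Q_1+\eta(P_0)$. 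This is a one-line application of subadditivity, not a combinatorial rearrangement, and it is what is missing from your outline.

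For Parts~(2) and~(3) the paper then simply exhibits one translate where the subadditivity inequality is strict. In Part~(2) it takes $Q_1=\mathbf V_{j_1}+\mathbf V_{j_2}$ and computes directly that the summand at $Q=Q_1+P_0$ equals $\{pr_{j_1}\}-\{pr_{j_2}\}>0$; this $Q_1$ has weight $1$, so $Q_1+P_0\in\Delta_k^-$ forces $k\geq 2$, which matches your intuition about the threshold. Your sketch for these parts is pointing in the right direction but, because it is built on the unproven orbit framework of Part~(1), it cannot be completed as written.
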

\begin{notation}\label{eta}
	For a point $Q$ in $\MM(\Delta)$ we write $Q\%$ for its residue in $\Delta^-$ modulo $\Lambda_\Delta$, and  set  
	\begin{eqnarray*}
	\eta:&\Delta^-\to \Delta^-\\
	&Q\mapsto (pQ)\% .
\end{eqnarray*}
be a permutation of $\Delta^-$.
\end{notation}

\begin{proof}[Proof of Proposition~\ref{IHPHP}]
	(1) Since any point $Q\in \Delta_k^+\backslash\Delta_k^-$ has integer weight, we get $$\big\lfloor w(pQ)\big\rfloor-\big\lfloor w(Q)\big\rfloor= w(pQ)- w(Q).$$ Therefore, we only need to prove this proposition for $x=\mathbbm x_k^-$.
	
	Since $\Delta_k^-$ can be decomposed into a disjoint union of shifts of $\Delta^-$ by points in $\Lambda_{\Delta}$, we reduce the question to show 
	 \begin{equation}\label{eq a}
	 \sum_{Q-Q_1\in \Delta^-}\Big(\big\lfloor w(pQ)\big\rfloor-\big\lfloor w(Q)\big\rfloor- w(pQ)+w(Q)\Big)\geq 0 \textrm{\ for every\ }
	 Q_1=\sum\limits_{i=1}^n m_i\mathbf V_i\in \Lambda_{\Delta}.
	 \end{equation}

		Let $Q-Q_1=\sum\limits_{i=1}^n r_i\mathbf V_i\in \Delta^-$.
We set $$m_\max:=\max_{1\leq j\leq n}(m_j)\quad\textrm{and} \quad S=:\{1\leq i\leq n\;|\; m_i=m_\max\}. $$
Take $j\in S$ such that $r_j=\max\limits_{i\in S}(r_i)$.
Then we have 
\begin{equation}\label{aa}
\big\lfloor w(Q)\big\rfloor+w(pQ)=m_j +pr_j+pm_j=w\Big(pQ+Q_1\Big).
\end{equation}	
	 Since $\eta$ is a permutation of $\Delta^-$, we know that 
	 \begin{equation}\label{ab}
	 \sum_{Q-Q_1\in \Delta^-} w(Q)=\sum_{Q-Q_1\in \Delta^-} w(Q_1+\eta(Q-Q_1)).
	 \end{equation}

Since $pQ\equiv \eta(Q-Q_1) \pmod{\Lambda_{\Delta}}$, we know 
\begin{equation}\label{ac}
w\Big(pQ-\eta(Q-Q_1)\Big)=(pr_j+pm_j)-(pr_j-\lfloor pr_j\rfloor)
=pm_j+\lfloor pr_j\rfloor=
\big\lfloor w(pQ)\big\rfloor.
\end{equation}
Combining \eqref{inequality for w}, \eqref{aa}, \eqref{ab} and \eqref{ac}, we get
	\begin{align*}
	&\sum_{Q-Q_1\in \Delta^-}\Big(\big\lfloor w(pQ)\big\rfloor-\big\lfloor w(Q)\big\rfloor- w(pQ)+w(Q)\Big)\\
		\stackrel{\eqref{ab}}{=}	&\sum_{Q-Q_1\in \Delta^-}\Big( \big\lfloor w(pQ)\big\rfloor-\big\lfloor w(Q)\big\rfloor- w(pQ)+w\Big(Q_1+\eta(Q-Q_1)\Big)\Big)\\
		\stackrel{\eqref{ac}}{=}	&\sum_{Q-Q_1\in \Delta^-}\Big( w\Big(pQ-\eta(Q-Q_1)\Big)-\big\lfloor w(Q)\big\rfloor- w(pQ)+w\Big(Q_1+\eta(Q-Q_1)\Big)\Big)\\
\stackrel{\eqref{aa}}{=}&\sum_{Q-Q_1\in \Delta^-}	\Big(w\Big(pQ-\eta(Q-Q_1)\Big)+w\Big(Q_1+\eta(Q-Q_1)\Big)-w\Big(pQ+Q_1\Big)\Big)\\
	\stackrel{\eqref{inequality for w}}{\geq}&0.
	\end{align*}

(2)  We put $Q_1=\mathbf V_{j_1}+\mathbf V_{j_2}$ and $Q=Q_1+P_0$.
From the assumptions (a) and (b), we have 
\begin{multline*}
w\Big(pQ-\eta(Q-Q_1)\Big)+w\Big(Q_1+\eta(Q-Q_1)\Big)-w\Big(pQ+Q_1\Big)\\=p+\lfloor pr_{j_2}\rfloor +(1+pr_{j_1}-\lfloor pr_{j_1}\rfloor)-(p+pr_2+1)
\\=pr_{j_1}-\lfloor pr_{j_1}\rfloor-(pr_{j_2}-\lfloor pr_{j_2}\rfloor)>0,
\end{multline*}
which completes the proof.

(3) The proof of (3) is similar to (2).
\end{proof}

\begin{example}
	Let $p=29$, $\mathbf V_1=(3,0)$ and $\mathbf V_2=(0,2)$. Then we have	$$\WW_6=\Delta_1^-=\{(0, 0),(0, 1),(1, 0),(1,1),(0, 2), (1,2) \},$$
	and the chart of weight of points in $\Delta^-$:
	\begin{center}
		\begin{tabular}{ | c|c|c|c|c|c|c|} 
			\hline
			$P_0$& $(0, 0)$&$(0, 1)$& $(1, 0)$&$(1,1)$&$(0,2)$&$(1, 2)$ \\ 
			\hline 
			$w(P_0)$	&$0$&$\frac{1}{3}$& $\frac{1}{2}$&$\frac{1}{2}$&$\frac{2}{3}$&$\frac{2}{3}$ \\ [1ex]
			\hline
			$w(pP_0)$& $0$&$\frac{29}{3}$& $\frac{29}{2}$&$\frac{29}{2}$&$\frac{58}{3}$&$\frac{58}{3}$ \\ [1ex]
			\hline
		\end{tabular}.
	\end{center}
Computing the left hand side of \eqref{eq a} for $Q_1=(0,0)$, we have
\[\sum_{P_0\in \Delta^-}\Big(\big\lfloor w(pP_0)\big\rfloor-\big\lfloor w(P_0)\big\rfloor- w(pP_0)+w(P_0)\Big)=\frac{1}{3}>0.
\]
\end{example}

We next give an estimate of the $T$-adic valuation of each entry of the matrix $N$. For this, we need to control the $T$-adic valuation of each $e_Q(T)$.
\begin{lemma}\label{L:estimate of Ef(x)}
	\hfill
	\begin{itemize}
		\item[(1)] Recall that $\prod\limits_{P\in \Delta^+\backslash \{\calO\}} E(\hat a_{P} \pi  \ux^P)$ expands as $\sum\limits_{Q\in \MM(\Delta)} e_{Q}(T) \ux^Q$. Then we have $$e_{O}(T)=1\quad \textrm{and}\quad \val_T(e_{Q}(T))\geq \big\lceil w(Q)\big\rceil\ \textrm{for all}\ Q \in \MM(\Delta).$$
		\item[(2)] Recall that $	\prod\limits_{P\in \Delta^+\backslash \{\calO\}} E(\widetilde a_{P} \pi  \ux^P)$ expands as $\sum\limits_{Q\in \MM(\Delta)} \widetilde{e}_{Q}(T) \ux^Q$. Then we have $$\widetilde e_{O}(T)=1\quad \textrm{and}\quad \val_T(\widetilde e_{Q}(T))\geq \big\lceil w(Q)\big\rceil\ \textrm{for all}\ Q \in \MM(\Delta).$$
	\end{itemize}
\end{lemma}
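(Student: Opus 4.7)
The plan is straightforward: expand the Artin--Hasse product and count the $T$-adic valuation of each contributing monomial, then apply convexity to extract the weight bound.

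First, I would expand
\[
\prod_{P\in \Delta^+\setminus\{\calO\}} E(\hat a_P \pi x^P) \;=\; \prod_{P\in \Delta^+\setminus\{\calO\}} \sum_{i_P\ge 0} c_{i_P} \hat a_P^{\,i_P} \pi^{i_P} x^{i_P P},
\]
using the Artin--Hasse expansion \eqref{Artin-Hasse} with $c_i\in \ZZ_p$. Collecting the coefficient of $x^Q$ yields
\[
e_Q(T) \;=\; \sum_{\mathbf i} \Bigl(\prod_{P} c_{i_P}\Bigr)\Bigl(\prod_P \hat a_P^{\,i_P}\Bigr)\, \pi^{|\mathbf i|},
\]
where $\mathbf i=(i_P)_{P\in\Delta^+\setminus\{\calO\}}$ runs over tuples of non-negative integers with $\sum_P i_P\cdot P = Q$, and $|\mathbf i|:=\sum_P i_P$.

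Next, I would use the identification $1+T = E(\pi) = 1+\pi + O(\pi^2)$, which gives $T = \pi + O(\pi^2)$ and hence $\val_T(\pi^k)=k$ in $\ZZ_p\llbracket T\rrbracket$. Therefore each term in the expansion of $e_Q(T)$ has $T$-adic valuation at least $|\mathbf i|$, so $\val_T(e_Q(T)) \geq \min_{\mathbf i}|\mathbf i|$ where the minimum ranges over tuples representing $Q$.

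The key step, and the only content beyond bookkeeping, is the weight inequality: if $Q = \sum_P i_P\cdot P$ with $P\in\Delta^+$, $i_P\geq 0$ and $k:=|\mathbf i|>0$, then $Q/k = \sum_P (i_P/k) P$ is a convex combination of points of $\Delta^+\subset \Delta$, so $Q\in k\Delta$ and thus $w(Q)\leq k$. Since $k$ is a non-negative integer, $k\geq \lceil w(Q)\rceil$, proving $\val_T(e_Q(T))\geq \lceil w(Q)\rceil$. For the normalization $e_{\calO}(T)=1$, observe that since $\Cone(\Delta)$ is strictly convex (generated by the linearly independent vectors $\overrightarrow{\calO\mathbf V_1},\dots,\overrightarrow{\calO\mathbf V_n}$), the only way to have $\sum_P i_P P = \calO$ with $i_P\geq 0$ and $P\neq \calO$ is $\mathbf i=0$, contributing the single term $c_0=1$.

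Finally, part (2) follows by exactly the same argument: the indeterminates $\widetilde a_P$ do not interact with $T$, so replacing $\hat a_P$ by $\widetilde a_P$ leaves every $T$-adic estimate intact, and the combinatorial identification of representations $\sum_P i_P P=Q$ is unchanged. The main (and really only) obstacle is the convexity argument in step three; the rest is direct substitution and the standard $\pi\leftrightarrow T$ change of variables.
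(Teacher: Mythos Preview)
Your proof is correct and follows essentially the same approach as the paper: expand the Artin--Hasse product, write $e_Q(T)$ as a sum over decompositions $\sum_P i_P P = Q$, and bound each term's $T$-adic valuation by $|\mathbf i|\ge w(Q)$. The only cosmetic difference is that the paper phrases the key inequality via subadditivity of the weight function together with $w(P)\le 1$ for $P\in\Delta^+$, whereas you phrase it as a convexity statement ($Q/k\in\Delta$); these are the same observation.
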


\begin{proof}
	(1) 
	We will only prove (1) since the proof of (2) is similar.
	
	It follows from Definition~\ref{E:Ef(x)} that $e_{O}(T)=1$.
	
	For each $P\in \Delta^+\backslash \{\calO\}=\Delta\cap \ZZ^n\backslash \{\calO\}$, we expand $E(\hat{a}_{P} \pi  \ux^{P})$ to a power series in variables $x_1, x_2,\dots,x_n$, and get $$\prod_{P\in \Delta^+\backslash \{\calO\}}E(\hat{a}_{P} \pi  \ux^{P})=\sum_{\vec{j}\in \ZZ_{\geq 0}^{\Delta^+\backslash \{\calO\}}}\Big(\prod_{P\in \Delta^+\backslash \{\calO\}}c_{j_P} (\hat{a}_{P} \pi  \ux^{P})^{j_P}\Big), $$
	where $\{\hat{a}_{P}\}$ is the set of coefficients of $\hat f(\ux)$ and $c_i\in \ZZ_p$ is the $\pi^i$ coefficient of $E(\pi)$.

Expanding this product and the sum, we deduce
\begin{multline*}
	e_{Q}(T)=\sum_{\Big\{\vec{j}\in \ZZ_{\geq 0}^{\Delta^+\backslash \{\calO\}}\;\Big|\;\sum\limits_{P\in \Delta^+\backslash \{\calO\}}j_PP=Q\Big\}}\Big(\prod_{P\in \Delta^+\backslash \{\calO\}}\Big(c_{j_P} (\hat{a}_{P} \pi)^{j_P}\Big)\Big)\\
	=\sum_{\Big\{\vec{j}\in \ZZ_{\geq 0}^{\Delta^+\backslash \{\calO\}}\;\Big|\;\sum\limits_{P\in \Delta^+\backslash \{\calO\}}j_PP=Q\Big\}}\Big(\prod_{P\in \Delta^+\backslash \{\calO\}}\Big(c_{j_P}(\hat{a}_{P})^{j_P} \Big) \pi^{\sum\limits_{P\in \Delta^+\backslash \{\calO\}}j_P}\Big).
\end{multline*}

	Since each point in  $\Delta^+\backslash \{\calO\}$ has weight less or equal to $1$, for each $\vec{j}$ such that $\sum\limits_{P\in \Delta^+\backslash \{\calO\}}j_PP=Q$, we have \[
	\val_T\Big(\prod_{P\in \Delta^+\backslash \{\calO\}}c_{j_P}(\hat{a}_{P})^{j_P}  \pi^{\sum\limits_{P\in \Delta^+\backslash \{\calO\}}j_P}\Big)\\=\sum\limits_{P\in \Delta^+\backslash \{\calO\}}j_P\geq \sum\limits_{P\in \Delta^+\backslash \{\calO\}}j_Pw(P)\geq w(Q).\]
	Note that $\val_T(\pi)=1$.
	Therefore, we immediately obtain $\val_T(e_{Q}(T))\geq w(Q)$. Since $\val_T(e_{Q}(T))$ is an integer, we have 
	\[\val_T(e_{Q}(T))\geq \big\lceil w(Q)\big\rceil.\qedhere\]
\end{proof}

\begin{corollary}\label{NQQ'}
	If both $Q$ and $Q'$ belong to $\MM(\Delta)$, then \begin{equation}\label{NQQ}
	\val_T(N_{Q',Q})=\val_T(e_{pQ'-Q})\geq \lfloor w(pQ')\rfloor-\lfloor w(Q)\rfloor,
	\end{equation}
	where $N_{Q',Q}$ is the entry of matrix $N$ as in Lemma~\ref{N}.
\end{corollary}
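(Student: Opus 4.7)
The corollary splits into two assertions: an equality $\val_T(N_{Q',Q})=\val_T(e_{pQ'-Q})$ and an inequality $\val_T(e_{pQ'-Q})\geq \lfloor w(pQ')\rfloor-\lfloor w(Q)\rfloor$. My plan is to handle each in turn; both are short and follow by assembling results already stated in \S\ref{section 2} and \S\ref{section 3}.

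For the equality, I would invoke Lemma~\ref{N}, which gives $N_{Q',Q}=E(\hat a_\calO\pi)\,e_{pQ'-Q}(T)$. The plan is to argue that the factor $E(\hat a_\calO\pi)$ is a unit in $\ZZ_q\llbracket T\rrbracket$, so its $T$-adic valuation is $0$ and multiplication by it preserves $\val_T$. This is immediate from Notation~\ref{c}: $E(\pi)\in 1+\pi\,\ZZ_p\llbracket \pi\rrbracket$, and under $1+T=E(\pi)$ we have $\pi\in T\,\ZZ_p\llbracket T\rrbracket^\times$, hence $E(\hat a_\calO\pi)\equiv 1\pmod{T}$.

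For the inequality, the plan is a three-step chain:
\begin{equation*}
\val_T(e_{pQ'-Q})\;\geq\;\bigl\lceil w(pQ'-Q)\bigr\rceil\;\geq\;\bigl\lceil w(pQ')-w(Q)\bigr\rceil\;\geq\;\bigl\lfloor w(pQ')\bigr\rfloor-\bigl\lfloor w(Q)\bigr\rfloor.
\end{equation*}
The first step is Lemma~\ref{L:estimate of Ef(x)}(1) (interpreted with the convention that when $pQ'-Q\notin\MM(\Delta)$ the coefficient vanishes and the inequality is trivial). The second step uses subadditivity of the weight function (Remark~\ref{lamme property of w}, inequality~\eqref{inequality for w}), which gives $w(pQ'-Q)\geq w(pQ')-w(Q)$, and then monotonicity of the ceiling function. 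The third step is the elementary fact $\lceil a-b\rceil \geq \lfloor a\rfloor - \lfloor b\rfloor$ for any reals $a,b$; I would verify this in one line by writing $a=\lfloor a\rfloor+\{a\}$, $b=\lfloor b\rfloor+\{b\}$ and splitting on whether $\{a\}\geq\{b\}$.

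I don't anticipate a real obstacle here: the content is packaged entirely in Lemma~\ref{N}, Lemma~\ref{L:estimate of Ef(x)}, and the subadditivity of $w$, all established above. The only point that warrants care is the floor/ceiling bookkeeping in the last step, since a naive use of $\lceil a-b\rceil\geq \lceil a\rceil-\lceil b\rceil$ (which fails in general) would be tempting but incorrect; the correct mixed-rounding inequality $\lceil a-b\rceil\geq\lfloor a\rfloor-\lfloor b\rfloor$ is what is needed and is exactly what the statement asks for.
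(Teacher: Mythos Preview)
Your proposal is correct and follows essentially the same route as the paper: the equality comes from $\val_T(E(\hat a_\calO\pi))=0$ via Lemma~\ref{N}, and the inequality is the chain $\val_T(e_{pQ'-Q})\geq\lceil w(pQ'-Q)\rceil\geq\lceil w(pQ')-w(Q)\rceil\geq\lfloor w(pQ')\rfloor-\lfloor w(Q)\rfloor$ using Lemma~\ref{L:estimate of Ef(x)}, subadditivity of $w$, and the floor/ceiling inequality. The paper compresses the subadditivity step and verifies the last inequality by pulling the integer $\lfloor w(pQ')\rfloor-\lfloor w(Q)\rfloor$ out of the ceiling rather than by a case split, but the content is identical.
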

\begin{proof}
	Since $\val_T(E(\hat a_\calO))=0$, we have 
	$$\val_T(N_{Q',Q})=\val_T(E(\hat a_\calO)\cdot e_{pQ'-Q})=\val_T(e_{pQ'-Q}).$$
	We assume that $pQ'-Q\in \MM(\Delta)$, for otherwise $\val(e_{pQ'-Q})=\infty$, which leads to \eqref{NQQ} directly.

	By Lemma~\ref{L:estimate of Ef(x)}, we have
\begin{multline*}
\val(e_{pQ'-Q})\geq \lceil w(pQ')-w(Q)\big\rceil\\
=\Big\lceil w(pQ')-w(Q)-\Big(\lfloor w(pQ')\rfloor -\lfloor w(Q)\rfloor\Big) \Big\rceil+\lfloor w(pQ')\rfloor -\lfloor w(Q)\rfloor\\
\geq\lfloor w(pQ')\rfloor -\lfloor w(Q)\rfloor.\qedhere
\end{multline*}
\end{proof}

\begin{notation}\label{det}
	\hfill
	\begin{enumerate}
		\item Let $\SS_1$ and $\SS_2$ be two sets of the same cardinality $\ell$. We denote by $\Iso(\SS_1,\SS_2)$ the set of bijections from $\SS_1$ to $\SS_2$.
		If the elements in these two sets are labeled as $$\SS_1:=\{Q_1,Q_2,\dots,Q_\ell\} \quad\textrm{and}\quad \SS_2:=\{Q_1',Q_2',\dots,Q_\ell'\},$$ for a bijection $\tau\in \Iso(\SS_1, \SS_2)$ such that  $\tau(Q_i)=Q'_{j_i}$ we put $$\sgn(\tau)=\sgn(j_1,j_2,\dots,j_\ell).$$

		\item 	For a function $G$ on $\SS_1\times\SS_2$, we put $$\det\Big(G(Q,Q')\Big)_{Q,Q'\in \SS_1\times\SS_2}:=\pm\sum_{\tau\in\Iso(\SS_1, \SS_2)}\sgn(\tau)\prod\limits_{Q\in \SS_1} G{(Q,\tau(Q))},$$
	\end{enumerate}
which is independent to the order of elements in $\SS_1$ and $\SS_2$ up to a sign.
\end{notation}

\begin{notation}\hfill
\begin{enumerate}
	\item 	For the rest of this section, we shall consider multisets, i.e. sets of possibly repeating elements. They are marked with a superscript star to be distinguished from usual sets. 
	\item 	The disjoint union of two multisets $\SS^\star$ and ${\SS'}^\star$ is denoted by $\SS^\star\uplus {\SS'}^\star$ as a multiset. 
	
	\item For a set $\SS$, we write ${\SS^\star}^m$ (resp. ${\SS^\star}^{\infty}$) for the union of $m$ (resp. countably infinite) copies of $\SS$ as a multiset. 
\end{enumerate}

\end{notation}

\begin{notation}\label{Iso and Prem}
	For two multisets $\SS_1^\star$ and $\SS_2^\star$ of the same cardinality, we denote by $\Iso(\SS_1^\star, \SS_2^\star)$ the set of bijections (as multisets) from $\SS_1^\star$ to $\SS_2^\star$. When $\SS_1^\star=\SS_2^\star=\SS^\star$, we simply set $\Iso(\SS^\star):=\Iso(\SS^\star, \SS^\star)$.
\end{notation}

\begin{definition}\label{definition of h}
	Let $\SS^\star$ be a subset of $\MM(\Delta)^{\star\infty}$. We define
	\begin{equation}\label{defintion of h SS}
	h(\SS^\star):=\sum\limits_{Q\in \SS^\star} \big\lfloor w(pQ)\big\rfloor-\big\lfloor w(Q)\big\rfloor. 
	\end{equation}
	The expression on the right hand side will be related to the estimate in Corollary~\ref{NQQ'}, and to the expression \eqref{IHPP}.
	
		If $\SS^\star$ belongs to $\MM(\Delta)$, we suppress the star from the notation.
	
\end{definition}

%\begin{lemma}\label{T-adic bound of det}
%	We have $$\val_T(\det(\SS))\geq h(\SS)\quad\textrm{and}\quad \val_T(u_\ell(T))\geq \min\limits_{\SS\subset \mathscr M_\ell}h(\SS).$$
%\end{lemma}
%\begin{proof}
%	It follows directly from Lemma~\ref{E:Ef(x)} and the definition of $h$ in	\eqref{defintion of h SS} and \eqref{h function}.
%\end{proof}

%
%\begin{definition}
%	 (1) The normalized Newton polygon of $C_f'(T)$ is the lower convex hull of the set of points $\left(i,\frac{\val_T(u_i)}{a}\right)$;
%	 
%	\noindent(2) We denote $\LP(\Delta)$ be a lower convex hull of the set of points $$\Big\{(i, \frac{1}{a}\min\limits_{\SS\subset \mathscr M_i}h(\SS))\Big\}.$$
%\end{definition}
%\begin{lemma}
%	The normalized Newton polygon of $C_f'$ lies above $\LP(\Delta)$.
%\end{lemma}
%\begin{proof}
%	It follows from Lemma~\ref{T-adic bound of det}. 
%\end{proof}
%
%By multiplying the height of $\LP(\Delta)$ at each point by $a$, we get a polygon, which is called  the \emph{improved Hodge polygon} of $\Delta$ and denoted by $\IHP(\Delta)$. By Proposition~\ref{determinant}, it is easy to check the following proposition.

\begin{notation}
	We denote $\mathscr M_\ell(k)$ to be the set consisting of all sub-multisets of ${\MM(\Delta)^\star}^k$ of cardinality $k\ell$. For simplicity, we put $\mathscr M_\ell:=\mathscr M_\ell(1)$.
\end{notation}
\begin{remark}
	It is clear that $\IHP(\Delta)$ is the lower convex hull of the set of points $\Big\{\left(\ell,\min\limits_{\SS\in \mathscr M_\ell}h(\SS)\right)\Big\}$.
\end{remark}

Now we gather more information of $\IHP(\Delta)$.
Recall that $D$ is the smallest positive integer that satisfies \eqref{equation w}.
	\begin{lemma}\label{first lemma}
		Let $\SS_1, \dots, \SS_m \in \mathscr M_{\ell}$ so that their disjoint union ${\mathbb{S}}^\star =\biguplus\limits_{j=0}^{m-1} \SS_{j}\in \mathscr M_\ell(m)$. 
		Then the following two statements are equivalent.
		\begin{itemize}
			\item[(1)] 	The minimum of $h({\mathbb{S}'}^\star)$ over all ${\mathbb{S}'}^\star \in \mathscr M_\ell(m)$ is achieved by $\mathbb{S}^\star$.
			\item[(2)]
			For every $1\leq i\leq m$, the sum of weights $\sum\limits_{Q \in \mathbb{S}_i} w(Q)$ achieves the minimum of $\sum\limits_{Q \in \mathbb{S}'} w(Q)$ over all $\mathbb{S}' \in \mathscr M_\ell.$
		\end{itemize}
\end{lemma}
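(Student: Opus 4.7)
The plan is to reduce the equivalence to two ingredients: (i) the strict monotonicity of a single-point contribution as a function of the weight, and (ii) an elementary decomposition of an element of $\mathscr M_\ell(m)$ into $m$ elements of $\mathscr M_\ell$. Once these are in place, the equivalence falls out formally.

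For (i), I would isolate the expression
\[
\varphi(v):=\lfloor pv\rfloor-\lfloor v\rfloor\quad\text{for }v\in\tfrac1D\ZZ_{\geq 0},
\]
so that $\lfloor w(pQ)\rfloor-\lfloor w(Q)\rfloor=\varphi(w(Q))$ (using $w(pQ)=p\,w(Q)$). I would then verify that $\varphi$ is strictly increasing on $\tfrac1D\ZZ_{\geq 0}$: incrementing $v$ by $1/D$ makes $\lfloor pv\rfloor$ grow by at least $\lfloor p/D\rfloor$, which is $\geq n+4$ by Hypothesis~\ref{hypothesis}, while $\lfloor v\rfloor$ grows by at most $1$. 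Hence $\varphi(v+\tfrac1D)-\varphi(v)\geq n+3>0$. This shows that $w$ and $\varphi\circ w$ induce the same total preorder on $\MM(\Delta)$, so an $\ell$-subset $\SS\in\mathscr M_\ell$ minimizes $\sum_{Q\in\SS}w(Q)$ if and only if it minimizes $h(\SS)=\sum_{Q\in\SS}\varphi(w(Q))$.

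For (ii), I would argue that every $\SS^\star\in\mathscr M_\ell(m)$ admits some decomposition $\SS^\star=\biguplus_{i=1}^m\SS_i$ with $\SS_i\in\mathscr M_\ell$. This is a standard round-robin construction: distribute the (at most $m$) copies of each point of $\SS^\star$ into distinct bins $\SS_1,\dots,\SS_m$, balancing so each bin ends with exactly $\ell$ elements; this succeeds because $|\SS^\star|=m\ell$ and the maximum multiplicity is $\leq m$. Summing $h$ additively gives
\[
h(\SS^\star)=\sum_{i=1}^m h(\SS_i)\geq m\cdot\min_{\SS'\in\mathscr M_\ell}h(\SS'),
\]
with equality if and only if every $\SS_i$ individually attains the minimum on $\mathscr M_\ell$. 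In particular $\min_{\mathscr M_\ell(m)}h=m\cdot\min_{\mathscr M_\ell}h$, and an element of $\mathscr M_\ell(m)$ attains this minimum if and only if each $\SS_i$ in any chosen decomposition does.

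Combining (i) and (ii) yields the equivalence: statement (1) says $\mathbb S^\star$ attains $\min_{\mathscr M_\ell(m)}h$, which by (ii) means each $\SS_i$ attains $\min_{\mathscr M_\ell}h$, which by (i) is equivalent to each $\SS_i$ attaining $\min_{\mathscr M_\ell}\sum w$ -- i.e., condition (2). The main (and essentially only) obstacle is the strict monotonicity of $\varphi$ in step (i); this is where Hypothesis~\ref{hypothesis} ($p>(n+4)D$) is indispensable, since without $p>D$ one could produce distinct weights sharing a $\varphi$-value, and then the direction $(1)\Rightarrow(2)$ would genuinely fail.
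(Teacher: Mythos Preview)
Your proposal is correct and hinges on the same key idea as the paper: the strict monotonicity of $\varphi(v)=\lfloor pv\rfloor-\lfloor v\rfloor$ on $\tfrac1D\ZZ_{\ge 0}$, which both you and the paper deduce from Hypothesis~\ref{hypothesis}. The paper proves this as the inequality \eqref{eqQQ'} (with the cruder bound $(p-1)/D-2>0$), and then argues exactly as you do that minimizing $h$ over $\mathscr M_\ell$ is equivalent to minimizing $\sum w$.

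The only difference is in how the general $m$ is handled. The paper simply says ``since $h$ is additive, without loss of generality $m=1$''; implicitly this uses that the minimum of $h$ over $\mathscr M_\ell(m)$ is $m$ times the minimum over $\mathscr M_\ell$, which follows directly from monotonicity (pick the $m\ell$ smallest $w$-values in $\MM(\Delta)^{\star m}$, i.e.\ $m$ copies of the $\ell$ smallest in $\MM(\Delta)$), together with $\sum_i h(\SS_i)=h(\SS^\star)$ to force equality termwise. Your step (ii)---explicitly decomposing an arbitrary $\SS'^\star\in\mathscr M_\ell(m)$ into $m$ ordinary sets via a round-robin argument---is a valid alternative that makes the reduction more explicit, but it is not actually needed: once monotonicity is in hand, the comparison can be made at the level of $w$-value multisets without ever decomposing a general $\SS'^\star$. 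So your route is slightly more laborious, but the substance is the same.
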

\begin{proof}
	It is obvious that $h$ is additive. Hence, without loss of generality, we assume $m=1$ in the proof.
	
	First, we claim that for $Q$ and $Q'$ two points in $\MM(\Delta)$, if 
	$w(Q)>w(Q')$, then \begin{equation}\label{eqQQ'}
	\lfloor w(pQ)\rfloor-\lfloor w(Q)\rfloor-(\lfloor w(pQ')\rfloor-\lfloor w(Q')\rfloor)>0.
	\end{equation}
	Indeed, by Definition~\ref{definition of D}, if $w(Q)>w(Q')$, then $$w(Q)\geq w(Q')+\frac{1}{D}.$$
	Since we assume $p>D(n+4)$ in Hypothesis~\ref{hypothesis}, we have 
\begin{multline*}
	\lfloor w(pQ)\rfloor-\lfloor w(Q)\rfloor-(\lfloor w(pQ')\rfloor-\lfloor w(Q')\rfloor)\\
	> w(pQ)-w(pQ')- w(Q)+w(Q')-2
	\geq \frac{p-1}{D}-2
	>0.
\end{multline*}
Therefore, if $\SS' \in\mathscr M_\ell$ such that $h(\SS')$ is minimal over all $\SS\in \mathscr M_\ell$, then $\sum\limits_{Q \in \SS'}w(Q)$ takes the minimum of $\sum\limits_{Q \in \SS} w(Q)$ for all $\SS\in \mathscr M_\ell.$

Let $\SS'$ and $\SS''$ be  two subsets of $\mathscr M_\ell$ such that $\sum\limits_{Q \in \mathbb{S}'} w(Q)$ and $\sum\limits_{Q \in \mathbb{S}''} w(Q)$ reach the minimum of $\sum\limits_{Q \in \mathbb{S}} w(Q)$ over all $\SS\in \mathscr M_\ell$. Clearly, the two multisets $\{w(Q)\;|\;Q\in \SS'\}$ and $\{w(Q)\;|\;Q\in \SS''\}$ are the same, which implies $h(\SS')=h(\SS'')$ and this lemma.
\end{proof}

\begin{proposition}\label{corollary smallest set}
For every integer $\ell\geq 1$, let $\WW_\ell$ be a set of $\ell$ elements in $ \MM(\Delta)$ with minimal weights. Then we have
% Then $\min\limits_{\mathbb{S} \in \mathscr M_\ell} h(\mathbb{S})= h(\WW_\ell).$
\begin{enumerate}
	\item	
	$\min\limits_{\SS^\star\in \mathscr M_\ell(m)}h(\SS^\star)=mh(\WW_\ell).$
	\item Order the elements in $\MM(\Delta)$ by their weights (in the non-decreasing order): $P_1, P_2,\dots.$ Suppose that $n_1, n_2, \dots$ are exactly the indices such that $w(P_{n_i+1}) > w(P_{n_i})$. Then $\IHP(\Delta)$ has vertices $(n_i, h(\WW_{n_i}))$ for every $i\geq 1$.
\end{enumerate}	
\end{proposition}
	\begin{proof}
	(1) From the proof of Lemma~\ref{first lemma}, we know that the minimum on the left is achieved when ${\mathbb{S}}^\star =\biguplus\limits_{j=0}^{m-1} \SS_{j}$ with $\SS_j=\WW_\ell$. Since
	$h$ is additive, we have
	$$h\Big(\biguplus\limits_{j=0}^{m-1}\SS_j\Big)=\sum_{j=0}^{m-1}h(\SS_j)=mh(\WW_\ell).$$
	
	(2) Since $\{w(P_n)\}$ is non-decreasing, by Lemma~\ref{first lemma}, the improved Hodge polygon $\IHP(\Delta)$ passes through the point $\left(\ell, \sum\limits_{i=1}^\ell (\lfloor w(pP_i)\rfloor-\lfloor w(P_i)\rfloor)\right)$ for every $\ell\geq 1$. Since $w(P_{n_i+1}) > w(P_{n_i})$, by \eqref{eqQQ'}, we have 	$$\lfloor w(pP_{n_i+1})\rfloor-\lfloor w(P_{n_i+1})\rfloor-\left(\lfloor w(pP_{n_i})\rfloor-\lfloor w(P_{n_i})\rfloor\right)>0.$$
	Therefore, $(n_i, h(\WW_{n_i}))$ is a vertex of $\IHP(\Delta)$ for every $i\geq 1$.
\end{proof}
\begin{corollary}
			\hfill
	\begin{enumerate}
		\item	The height of the improved Hodge polygon at $x=\mathbbm{x}_k^\pm$ satisfies 
	$$\min\limits_{\SS\in \mathscr M_{\mathbbm x_k^\pm}} h(\SS)=h(\Delta^\pm_k).$$
	\item Every Newton slope of $\IHP(\Delta)$ before point $x=\mathbbm x^-_k$ is strictly less than $k(p-1)$.
	\item Every slope after the point $x=\mathbbm x^+_k$ is strictly greater than $k(p-1)$.
	\item Every Newton slope of $\IHP(\Delta)$ between points $x=\mathbbm x^-_k$ and $x=\mathbbm x^+_k$ is equal to $k(p-1)$.
	\item For every $k\geq 1$, the point $(\mathbbm x_k^\pm, h(\Delta_k^\pm))$ is a vertex of $\IHP(\Delta)$. 
\end{enumerate}
\end{corollary}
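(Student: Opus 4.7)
The plan is to reduce all five parts to Proposition~\ref{corollary smallest set} together with a short case analysis of the slope contribution $\lfloor pw(P)\rfloor - \lfloor w(P)\rfloor$ for $P \in \MM(\Delta)$ of weight $w(P)\in\tfrac{1}{D}\ZZ_{\geq 0}$. For part (1), I would first identify $\WW_{\mathbbm x_k^\pm}$ with $\Delta_k^\pm$. Since $\Delta_k^-$ consists of exactly those $Q \in \MM(\Delta)$ with $w(Q) < k$, and the next smallest weight occurring, namely $k$, is attained precisely on $\Delta_k^+ \setminus \Delta_k^-$, the set $\Delta_k^-$ is unambiguously a valid $\WW_{\mathbbm x_k^-}$. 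For the $+$ version, ties on the faces of $k\Delta$ may give several admissible choices of $\WW_{\mathbbm x_k^+}$, but $\Delta_k^+$ is one of them and all of them yield the same $h$-value by Lemma~\ref{first lemma}. Proposition~\ref{corollary smallest set}(1) then gives $\min_{\SS\in\mathscr M_{\mathbbm x_k^\pm}} h(\SS) = h(\Delta_k^\pm)$.

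Next I would compare $\lfloor pw \rfloor - \lfloor w \rfloor$ with $k(p-1)$ in the three regimes $w<k$, $w=k$, $w>k$. Writing $w = a/D$, the elementary bounds $\lfloor pw \rfloor \leq pw$ and $\lfloor w \rfloor \geq w - (D-1)/D$ yield
\[
\lfloor pw\rfloor - \lfloor w\rfloor \;\leq\; (p-1)w + \tfrac{D-1}{D},
\]
which, combined with $w \leq k - 1/D$ in the case $w<k$, is strictly less than $k(p-1)$ provided $p > D$, a consequence of Hypothesis~\ref{hypothesis}. A symmetric computation using $\lfloor pw\rfloor \geq pw - (D-1)/D$ and $\lfloor w \rfloor \leq w$, together with $w \geq k + 1/D$ in the case $w > k$, yields the reverse strict inequality $\lfloor pw\rfloor - \lfloor w\rfloor > k(p-1)$. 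In the boundary case $w = k$ one has the exact equality $\lfloor pk\rfloor - \lfloor k \rfloor = k(p-1)$.

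With this trichotomy in hand, parts (2), (3), (4) are immediate from Proposition~\ref{corollary smallest set}(2): ordering the points of $\MM(\Delta)$ by non-decreasing weight, the consecutive slope of $\IHP(\Delta)$ at position $\ell$ is exactly $\lfloor pw(P_\ell)\rfloor - \lfloor w(P_\ell)\rfloor$, and the classification above says this slope is $<k(p-1)$, $=k(p-1)$, or $>k(p-1)$ according to whether $P_\ell$ lies in $\Delta_k^-$, in $\Delta_k^+\setminus\Delta_k^-$, or outside $\Delta_k^+$. Part (5) follows at once: at $\mathbbm x_k^-$ the consecutive slope jumps strictly from a value $<k(p-1)$ to $k(p-1)$, and at $\mathbbm x_k^+$ it jumps strictly from $k(p-1)$ to a value $>k(p-1)$, so both points are vertices of $\IHP(\Delta)$. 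No step is delicate; the only place requiring any care is the strict inequality $p > D$ supplied by Hypothesis~\ref{hypothesis}, which is precisely what prevents the floor estimates from collapsing to $k(p-1)$ when $w \neq k$.
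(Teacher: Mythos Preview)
Your proof is correct and follows the same approach the paper intends: the paper's own proof is the single sentence ``They are straightforward corollaries of Proposition~\ref{corollary smallest set},'' and what you have written is precisely a careful unpacking of that claim. Your explicit floor estimates, requiring only $p>D$ (which is weaker than the $p>(n+4)D$ of Hypothesis~\ref{hypothesis}), are the natural way to verify the trichotomy on $\lfloor pw\rfloor-\lfloor w\rfloor$ and are in the same spirit as the inequality \eqref{eqQQ'} already established in Lemma~\ref{first lemma}.
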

\begin{proof}
	They are straightforward corollaries of  Proposition~\ref{corollary smallest set}.
%			(1) It follows directly from Proposition~\ref{corollary smallest set} (2).
%			
%			(2) Let $\ell< \mathbbm x_k^-$. 
%			By Lemma~\ref{first lemma}, the subset $\SS_0 \in \mathscr M_\ell$ that achieves the minimum in $\min\limits_{\substack{\#\SS =\mathscr M_\ell}} h(\SS)$ is contained in $\Delta_k^-$.
%		
%			Therefore, we have 
%			\begin{equation}\label{equation 3}
%			\begin{split}
%			\frac{1}{\mathbbm x_k^--\ell}\Big(h(\Delta_k^-)-h(\SS_0)\Big)=&\frac{1}{\mathbbm x_k^--\ell}h(\Delta_k^--\SS_0)\\
%			\leq& \frac{1}{\mathbbm x_k^--\ell}\Big(\sum\limits_{Q\in \Delta_k^--\SS_0}\lfloor w(pQ)\rfloor-\lfloor w(Q)\rfloor \Big)\\
%			\leq&\frac{1}{\mathbbm x_k^--\ell}\Big( \sum\limits_{Q\in \Delta_k^--\SS_0} w(pQ)- w(Q)+1 \Big)\\
%			\leq&  (p-1)(k-\frac{1}{D})+1\\
%			<& k(p-1),
%			\end{split}
%			\end{equation}
%		which implies (2).
%		
%	
%	(3) Similar to (2), for $\ell> \mathbbm x_k^+$, we have $$\frac{1}{\ell-\mathbbm x_k^+}\Big(\min\limits_{\SS\in \mathscr M_\ell} h(\SS)-h(\Delta_k^+)\Big)>k(p-1),$$
%	which completes the proof.
%	
%		(4) Similar to (2) and (3), for $\mathbbm x_k^-<\ell< \mathbbm x_k^+$, we have $$\frac{1}{\ell-\mathbbm x_k^-}\Big(\min\limits_{\SS\in \mathscr M_\ell} h(\SS)-h(\Delta_k^-)\Big)=\frac{1}{\mathbbm x_k^+-\ell}\Big(\min\limits_{\SS\in \mathscr M_\ell} h(\Delta_k^+)-h(\SS)\Big)= k(p-1).$$ 
%		Combining it with (2) and (3), we shows (4).
%		
%	(5) It directly follows from (2)-(4).
	\end{proof}

\begin{notation}\label{notation 3.14}
	For a subset $I\subseteq\{1,2,\dots,n\}$ we put
	$$\Delta^-(I)=\Big\{\sum\limits_{i=1}^n r_i\mathbf V_i\in \Delta^-\Big|\;
r_i=0 \textrm{\ if\ } i\in I;\ 
r_i\in (0,1)	 \textrm{\ otherwise}
	\Big\}\subseteq \Delta^-.$$

\end{notation}
%\begin{remark}
%	Note that it is compatible to the definition of $\Delta_k^\pm(I)$ in Notation~\ref{Delta(I)}.
%\end{remark}

\begin{lemma}\label{polynomialforx}
	For every integer $k\geq 1$ we have
	\begin{enumerate}
		\item $\mathbbm x^-_k=k^n\Vol(\Delta),$ and 
		\item $\mathbbm x^+_k=\sum\limits_{I\subseteq \{1,2,\dots,n\}}\#\Delta^-(I)k^{\#I}{(k+1)}^{n-\#I}.$
	\end{enumerate}
	
\end{lemma}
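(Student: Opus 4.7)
The plan is to prove both parts by direct lattice-point counting, exploiting the parallelotope structure of $\Delta$ together with the observation that the half-open parallelotope $\Delta^\circ = \{\sum_{i=1}^n r_i \mathbf V_i : r_i \in [0,1)\}$ is a fundamental domain for the sublattice $L := \bigoplus_{i=1}^n \ZZ\mathbf V_i \subseteq \ZZ^n$.

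For (1), first identify $\Delta^- = \Delta^\circ \cap \ZZ^n$ as a complete set of coset representatives for $\ZZ^n/L$; its cardinality equals the index $[\ZZ^n : L] = |\det(\mathbf V_1, \ldots, \mathbf V_n)| = \Vol(\Delta)$, so $\mathbbm{x}^-_1 = \Vol(\Delta)$. For general $k$, decompose $k\Delta^\circ$ as the disjoint union of the $k^n$ translates $\sum_i m_i \mathbf V_i + \Delta^\circ$ indexed by $(m_1,\ldots,m_n) \in \{0,1,\ldots,k-1\}^n$. Since each translation vector lies in $L \subseteq \ZZ^n$, each translate contributes $\mathbbm{x}^-_1$ lattice points, giving $\mathbbm{x}^-_k = k^n \Vol(\Delta)$.

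For (2), stratify $\Delta^+_k$ via reduction modulo $L$. Any $Q \in \Delta^+_k$ has a unique expansion $Q = \sum_i t_i \mathbf V_i$ with $t_i \in [0,k]$; setting $m_i = \lfloor t_i \rfloor \in \ZZ_{\geq 0}$ and $r_i = t_i - m_i \in [0,1)$ produces a bijection between $\Delta^+_k$ and the set of pairs $(P_0, (m_i)_{i=1}^n)$ such that $P_0 = \sum_i r_i \mathbf V_i \in \Delta^-$ and $m_i + r_i \in [0,k]$ for each $i$. Case analysis on each coordinate then gives: if $r_i = 0$ (i.e.\ $i$ is a zero-index of $P_0$) then $m_i$ ranges over $\{0,1,\ldots,k\}$, yielding $k+1$ choices, whereas if $r_i \in (0,1)$ then the constraint $m_i + r_i \leq k$ forces $m_i \in \{0,1,\ldots,k-1\}$, yielding $k$ choices. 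Using the partition $\Delta^- = \bigsqcup_I \Delta^-(I)$ from Notation~\ref{notation 3.14}, where the index $I$ records the zero coordinates of $P_0$, and multiplying out and summing over $I$ yields the claimed expression for $\mathbbm{x}^+_k$ as a polynomial in $k$.

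Both parts are elementary once this framework is in place; there is no real obstacle. The only point requiring care is the matching of the open versus closed boundaries of $\Delta^\circ$ and $k\Delta$ under the integer/fractional part decomposition, so that each lattice point is counted exactly once and the correct exponent ($k$ versus $k+1$) is assigned to each coordinate according to whether $r_i$ vanishes.
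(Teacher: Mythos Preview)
Your proof is correct and follows essentially the same approach as the paper: both arguments decompose each lattice point $Q$ uniquely as $P_0 + \sum_i m_i \mathbf V_i$ with $P_0 \in \Delta^-$ and $m_i \in \ZZ_{\geq 0}$, and then count the admissible tuples $(m_i)$ according to whether the $i$-th coordinate of $P_0$ vanishes. Your additional remark identifying $\#\Delta^- = [\ZZ^n : L] = \Vol(\Delta)$ via the determinant is a nice touch that the paper leaves implicit.
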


\begin{proof}
	(1) 	Every point in $Q\in \Delta_k^-$ can be uniquely written as a sum of a point  $P_0\in \Delta^-$ and a point  $Q_1=\sum\limits_{i=1}^n m_i\mathbf V_i\in \Lambda_{\Delta}$  for an $n$-dimensional vector $\underline m\in \{0,\dots,k-1\}^n$.
	It implies $\mathbbm x^-_k=\#(\Delta^-) k^n.$
	Since $\Delta$ is a parallelotope, we have $\#\Delta^-=\Vol(\Delta)$, and hence
	$\mathbbm x^-_k=k^n\Vol(\Delta).$
	
	(2) Let $I\subset \{1,2,\dots,n\}$, $P_0\in \Delta^-(I)$ and $Q\in \MM(\Delta)$ such that $Q\equiv P_0\pmod{\Lambda_\Delta}$. Write $Q=P_0+Q_1$ for some $Q_1=\sum\limits_{i=1}^n m_i\mathbf V_i\in \Lambda_{\Delta}$. Then 
	$Q$ belongs to  $\Delta_k^+$ if and only if $0\leq m_i\leq k$ for every $i\in I$ and $0\leq m_i\leq k-1$ for every $i\notin I$. Therefore, there are $k^{\#I}(k+1)^{n-\#I}$ points in $\Delta_k^+$ with the residue $P_0$ module $\Lambda_{\Delta}$. It completes the proof.
\end{proof}

%\begin{notation}
%	for every non-empty set $I\subset\{1,2,\dots,n\}$ and $k\geq 1$, we put 
%	$$U(k;I)=\Big\{\sum_{i=1}^n m_i\mathbf V_i\in \Lambda_{\Delta}\;\Bigg|\;
%	 \begin{aligned}
%	&	m_i=k\quad  \textrm{if}\ i\in I,\\
%	&	m_i< k\quad  \textrm{otherwise}
%	\end{aligned}
%	 \Big\}.$$
%\end{notation}

%\begin{lemma}\label{lemma h(S1)-h(S2)}
%	Let $P_0=\sum\limits_{i=1}^{n}r_i\mathbf{V}_i\in \Delta^-$, and let $I$ be a non-empty set $\{1,2,\dots,n\}$. Let $k_1$ and $k_2$ be two positive integers.  Then for every $Q_1\in P_0+U(k_1;I)$ and $P_2\in P_0+ U(k_2;I)$, we have 
%	\begin{equation}
%	h(Q_1)=h(P_2)+(k_1-k_2)(p-1),
%	\end{equation}
%	where $h(P)=\lfloor w(pP)\rfloor-\lfloor w(P)\rfloor.$
%\end{lemma}
%
%\begin{proof}
%	
%	We assume $r_{s}=\max\{r_i\;\big|\;i\in I\}$
%	for some $s\in I$, and put
% $$Q_1=P_0+\sum_{i=1}^n m_{1,i}\mathbf V_i\quad \textrm{and}\quad  
%	P_2=P_0+\sum_{i=1}^n m_{2,i}\mathbf V_i.$$
%	
%
%By Remark~\ref{lamme property of w} (1), we have
%$$\lfloor w(p(P_0+\sum_{i=1}^n m_{1,i}V_i))\rfloor-\lfloor w(P_0+\sum_{i=1}^n m_{1,i}V_i)\rfloor=\lfloor pr_s\rfloor +pk_1-k_1$$
%and 
%$$\lfloor w(p(P_0+\sum_{i=1}^n m_{2,i}V_i))\rfloor-\lfloor w(P_0+\sum_{i=1}^n m_{2,i}V_i)\rfloor=\lfloor pr_s\rfloor +pk_2-k_2,$$
%which implies 
%\begin{align*}
%h(Q_1)-h(P_2)=&pk_1-k_1-pk_2+k_2\\
%=&(k_1-k_2)(p-1).\qedhere
%\end{align*}
%\end{proof}
% 

% \begin{proposition}\label{proposition G}
% 
% \end{proposition}
% \begin{proof}
% 	See Faulhaber's formula.
% \end{proof}

\begin{lemma}\label{lemma polynomial for h}
	The functions $h(\Delta_k^-)$ and $h(\Delta_k^+)$ are both  polynomials in $k$ of degree $n+1$, i.e. for every $k \geq 1$ we have $$h(\Delta_k^\pm)=\sum_{i=0}^{n+1} A^\pm_ik^i,$$
	where $A^\pm_i$  are integers which depend only on $\Delta$.
\end{lemma}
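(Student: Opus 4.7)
The plan is to decompose $\Delta_k^\pm$ into cosets of $\Lambda_\Delta$ and compute $\bigl\lfloor w(pQ)\bigr\rfloor-\bigl\lfloor w(Q)\bigr\rfloor$ coset by coset using the explicit description of the weight function in Remark~\ref{lamme property of w}. As in the proof of Lemma~\ref{polynomialforx}, every lattice point $Q\in\Delta_k^\pm$ admits a unique expression $Q=P_0+\sum_{i=1}^n m_i\mathbf V_i$ with $P_0\in\Delta^-$, and the admissible tuples $(m_1,\dots,m_n)$ run over a rectangular box whose sides are controlled by $k$: the box is $[0,k-1]^n$ for $\Delta_k^-$, and for $\Delta_k^+$ it is $[0,k]^{|I|}\times[0,k-1]^{n-|I|}$ for each $P_0\in\Delta^-(I)$ and each $I\subseteq\{1,\dots,n\}$.

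Write $P_0=\sum r_i\mathbf V_i$ with $r_i\in[0,1)$, so by Remark~\ref{lamme property of w}, $w(Q)=\max_i(r_i+m_i)$. Since $r_i<1$, the maximum is forced to occur at some index $j$ with $m_j=M(m):=\max_i m_i$, giving
\[
w(Q)=M(m)+R(P_0,m),\qquad R(P_0,m):=\max\{r_i\mid m_i=M(m)\}\in[0,1).
\]
Because $pM(m)\in\ZZ$, this yields the key identity
\[
\bigl\lfloor w(pQ)\bigr\rfloor-\bigl\lfloor w(Q)\bigr\rfloor=(p-1)M(m)+\lfloor pR(P_0,m)\rfloor,
\]
which I would then sum over $m$ in the rectangular box and over the (finitely many, $k$-independent) cosets $P_0$.

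For the first summand, $(p-1)\sum_{m}M(m)$ is a polynomial in $k$ of degree $n+1$: for instance, over the $\Delta_k^-$ box, $\sum_{m\in[0,k-1]^n}M(m)=\sum_{M=0}^{k-1}M\cdot\bigl[(M+1)^n-M^n\bigr]$, which is a standard degree $n+1$ polynomial in $k$, and the $\Delta_k^+$ boxes are handled identically after splitting off the $|I|$ longer coordinates. For the second summand, I fix $P_0$ and partition the box by the value $\rho$ of $R(P_0,m)$, which ranges over the finitely many distinct $r_i$'s. Using the auxiliary sets $A_\rho=\{i:r_i\geq\rho\}$ and $B_\rho=\{i:r_i<\rho\}$, the condition $R(P_0,m)\geq\rho$ becomes $\max_{A_\rho}m_i\geq\max_{B_\rho}m_i$, and a routine inclusion--exclusion in the style of the count $\sum_{M=0}^{k-1}[(M+1)^{|B_\rho|}-M^{|B_\rho|}]\cdot M^{|A_\rho|}$ shows that $\#\{m\mid R(P_0,m)\geq\rho\}$ is a polynomial in $k$ of degree $n$. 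Consequently $\sum_m\lfloor pR(P_0,m)\rfloor$ is a polynomial in $k$ of degree at most $n$, and summing over the finitely many $P_0$ preserves polynomiality.

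Adding the two contributions gives $h(\Delta_k^\pm)\in\QQ[k]$ of total degree $n+1$, the leading coefficient coming entirely from the first summand (which is strictly positive, so the degree is exactly $n+1$). The main bookkeeping difficulty I expect is the integrality of the coefficients $A_i^\pm$: the decomposition above only produces a $\QQ$-polynomial that is integer-valued on positive integers, and to promote this to integer coefficients one should rewrite each summation over the box as a $\ZZ$-linear combination of the monomial--like building blocks $k^j$ and $(k+1)^j$ coming from the two shapes of sides of the boxes, rather than applying Bernoulli-style summation formulas directly (which would introduce denominators); this refined bookkeeping, carried out stratum by stratum over $P_0\in\Delta^-(I)$, is the technical heart of the lemma.
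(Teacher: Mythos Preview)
Your approach is essentially the same as the paper's. Both proofs decompose $\Delta_k^\pm$ into cosets $P_0+\sum m_i\mathbf V_i$ and use the key observation that $\lfloor w(pQ)\rfloor-\lfloor w(Q)\rfloor=(p-1)M(m)+\lfloor p\max_{i:m_i=M(m)}r_i\rfloor$; the paper packages this via the strata $U(\ell;I)=\{m:\max_i m_i=\ell,\ \{i:m_i=\ell\}=I\}$, while you package it by the value of $M(m)$ and then of $R(P_0,m)$, but the underlying stratification is identical and both reduce to power sums $\sum_{\ell}\ell^j$.

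One comment on the integrality of the $A_i^\pm$, which you rightly flag as the delicate point: the paper's own argument does not establish it either, since it invokes the Faulhaber polynomials $G_{j,\ell}=\sum_{i=1}^{\ell} i^{\,j}$, which have rational coefficients. In fact integrality fails in general: for $\Delta$ the unit square ($n=2$, $D=1$) and $p=11$ one computes $h(\Delta_k^-)=10\bigl(\tfrac{2}{3}k^3-\tfrac{1}{2}k^2-\tfrac{1}{6}k\bigr)$, whose leading coefficient is $20/3$. So your proposed ``refined bookkeeping'' cannot succeed as stated. Fortunately this does no damage to the paper: the only place the lemma is invoked (Lemma~\ref{lemmafor1.7(2)}) uses solely that $h(\Delta_k^\pm)$ is a polynomial in $k$ of degree $n+1$, which your argument and the paper's both prove correctly. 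You should therefore drop the integrality claim and record only $A_i^\pm\in\QQ$.
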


\begin{proof}
	By Lemma~\ref{polynomialforx}, we have
\begin{multline*}
	h(\Delta_k^+)=h(\Delta_k^-)+(p-1)k(\mathbbm x_k^+-\mathbbm x_k^-)\\
	=h(\Delta_k^-)+(p-1)k\sum_{I\subseteq \{1,2,\dots,n\}}\left(\#\Delta^-(I)k^{\#I}{(k+1)}^{n-\#I}-\#\Delta^-(I)k^n\right).
\end{multline*}
	Hence, 
	it is enough to prove the statement for $h(\Delta_k^-)$.
	For every nonempty subset $I\subseteq\{1,2,\dots,n\}$ and $k\geq 1$, we put 
	$$U(k;I)=\Bigg\{\sum_{i=1}^n m_i\mathbf V_i\in \Lambda_{\Delta}\;\Bigg|\;
	\begin{aligned}
	&	m_i=k\quad  \textrm{if}\ i\in I\\
	&	m_i< k\quad  \textrm{otherwise}
	\end{aligned}
	\Bigg\}.$$
	Clearly, a point $Q\in \MM(\Delta)$ belongs to $\Delta_k^-\backslash \Delta^-$ if and only if there exist $P_0\in \Delta^-$, $\ell\leq k-1$, and a nonempty subset $I\subseteq \{1,2,\dots,n\}$ such that $Q\in P_0+U(\ell;I)$. 
	Therefore, we have the following decomposition:
	$$\Delta_k^-=\Delta^-\sqcup\coprod_{P_0\in \Delta^-}\coprod_{\substack{I\subseteq\{1,2,\dots,n\}\\I\neq \emptyset}}\coprod_{\ell=1}^{k-1}\left( P_0+U(\ell;I)\right).$$
	
Since $h$ is additive, we obtain
		$$h(\Delta_k^-)=h(\Delta^{-})+\sum_{P_0\in \Delta^-}\sum_{\substack{I\subseteq\{1,2,\dots,n\}\\I\neq \emptyset}}\sum_{\ell=1}^{k-1} h(P_0+U(\ell;I)).$$
		
	It is enough to show that for a fixed $P_0$ and a nonempty subset $I\subseteq I(P_0)$, the sum $\sum\limits_{\ell=1}^{k-1}h(P_0+U(\ell;I))$ is a polynomial in $k$ of degree $\leq n+1$.

	Now we prove that every pair of points $P_1=P_0+\sum\limits_{i=1}^n m_{1,i}\mathbf V_i\in P_0+U(k_1;I)$ and $P_2=P_0+\sum\limits_{i=1}^n m_{2,i}\mathbf V_i\in P_0+ U(k_2;I)$ satisfies
	\begin{equation}\label{equationP1P2}	h(P_1)=h(P_2)+(k_1-k_2)(p-1),
	\end{equation}
	where $h(P):=\lfloor w(pP)\rfloor-\lfloor w(P)\rfloor.$
	
	Let $P_0=\sum\limits_{i=1}^{n}r_i\mathbf{V}_i$ and $r_{s}=\max\limits_{i\in I}\{r_i\}$
	for some $s\in I$.
	By \eqref{equation w1}, we have
	$$\Big\lfloor w(p(P_0+\sum_{i=1}^n m_{j,i}V_i))\Big\rfloor-\Big\lfloor w(P_0+\sum_{i=1}^n m_{j,i}V_i)\Big\rfloor=\lfloor pr_s\rfloor +pk_j-k_j$$
for $j=1,2$,
	which implies \eqref{equationP1P2}.

	 We choose a representative $P'\in U(1;I)$ and put $$h(P_0;I):=h(P_0+P').$$
	 By \eqref{equationP1P2}, $h(P_0;I)$ is well defined and for every point $P\in P_0+U(\ell;I)$ we have $$h(P)=h(P_0;I)+(\ell-1)(p-1).$$
	Therefore, 
	we have
\begin{multline}\label{hP}
h(P_0+U(\ell;I))=\#U(\ell;I)\Big(h(P_0;I)+(\ell-1)(p-1)\Big)\\=\ell^{n-\#I}\Big(h(P_0;I)+(\ell-1)(p-1)\Big).
\end{multline}

   For every $\ell\geq 1$ and $k\geq 0$ we put
    $	G_{k,\ell}:= \sum\limits_{i=1}^\ell i^k$, which is well-known to be a polynomial in $\ell$ of degree $k+1$. Therefore, the function 	\begin{multline*}
    \sum_{\ell=1}^{k-1} \ell^{n-\# I}\Big(h(P_0;I)+(\ell-1)(p-1)\Big)
    =G_{n-\#I,k-1}(h(P_0;I)-p+1)+(p-1)G_{n+1-\#I,k-1}
    \end{multline*}
     is a polynomial in $k$ of degree in $n+1$, so is $h(\Delta_k^-)$ when combined with \eqref{hP}.
\end{proof}

\begin{notation}
	We denote by \[\left[ \begin{array}{cccccccccc}
	P_0 & P_1 &\cdots&P_{\ell-1} \\
	Q_0 & Q_1 &\cdots&Q_{\ell-1}  \end{array} \right]_M\]
	the $\ell\times \ell$-submatrix formed by elements of a matrix $M$ whose row indices belong to $\{P_0,P_1,\dots,P_{\ell-1}\}$ and whose column indices belong to $\{Q_0,Q_1,\dots,Q_{\ell-1}\}$.
\end{notation}

Recall that we defined the improved Hodge polygon $\IHP(\Delta)$ in Definition~\ref{IHP}.
\begin{proposition}\label{IHP is below NPchi}
For every $f\in \calF(\Delta)$, the normalized Newton polygon $\NP(f, T)_C$ lies above $\IHP(\Delta)$. 
\end{proposition}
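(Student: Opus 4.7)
The plan is to combine Dwork's trace formula (Theorem~\ref{determinant}) with a Lindstr\"om--Gessel--Viennot (LGV) style cancellation to estimate each principal minor appearing in the coefficients $u_\ell(T)$. Let $M$ denote the matrix of $\psi^m$ in the basis $\{x^Q\}_{Q\in\MM(\Delta)}$. The expansion of $\det(I - sM)$ gives
\[
u_\ell(T) \;=\; (-1)^\ell \!\!\sum_{\substack{\SS \subset \MM(\Delta) \\ |\SS| = \ell}}\!\! \det(M|_{\SS}),
\]
so it suffices to show $\val_T(\det(M|_{\SS})) \ge m\, h(\WW_\ell)$ for every $\ell$-element subset $\SS$; dividing by $m$ and passing to lower convex hulls then forces $\NP(f,T)_C \ge \IHP(\Delta)$ (since $\IHP(\Delta)$ is itself the lower convex hull of the points $(\ell, h(\WW_\ell))$).

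First I would unfold $\psi^m$ as an $m$-fold iteration of $\psi$, so that the $(Q',Q)$-entry of $M$ is a sum over $m$-step chains
\[
M_{Q',Q} \;=\; \sum_{Q = Q_0,\, Q_1,\, \dots,\, Q_m = Q'} \prod_{i=1}^{m} \sigma_{\Frob}^{?}\big(N_{Q_i, Q_{i-1}}\big),
\]
with Frobenius twists whose exact form is irrelevant for $T$-adic valuation estimates. Expanding each principal minor as a signed sum over permutations $\tau$ of $\SS$ and over choices of $m$-step paths from $Q$ to $\tau(Q)$ for each $Q \in \SS$ yields a big double sum. The crucial step is an LGV involution: if at some intermediate time step $i_0$ two paths share a common vertex $R$, swap their tails past time $i_0$. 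The resulting configuration carries the same weight product but its endpoint permutation differs from $\tau$ by a transposition, hence the opposite sign. Fixing a canonical choice (say, the earliest such $i_0$, and the lexicographically smallest colliding pair) makes this a fixed-point-free, sign-reversing involution on non-vertex-disjoint configurations, whose contributions cancel. Only vertex-disjoint path systems survive, and for such a system $\SS_i := \{Q_i^{(Q)} : Q \in \SS\}$ is a genuine $\ell$-element subset of $\MM(\Delta)$ for every $i = 0, 1, \dots, m$, with $\SS_0 = \SS_m = \SS$.

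Next, Corollary~\ref{NQQ'} (whose bound is Frobenius-invariant) gives $\val_T(N_{Q_i, Q_{i-1}}) \ge \lfloor w(pQ_i)\rfloor - \lfloor w(Q_{i-1})\rfloor$ on each edge. Summing over $Q \in \SS$ and $i$, then telescoping via $\SS_0 = \SS_m$, yields
\[
\val_T\Big(\prod_{Q,i}\sigma_{\Frob}^{?}\big(N_{Q_i^{(Q)}, Q_{i-1}^{(Q)}}\big)\Big) \;\ge\; \sum_{i=1}^{m}\Big(\sum_{Q\in \SS_i}\lfloor w(pQ)\rfloor - \sum_{Q\in \SS_{i-1}}\lfloor w(Q)\rfloor\Big) \;=\; \sum_{i=1}^{m} h(\SS_i).
\]
Because each $\SS_i$ is an $\ell$-element subset of $\MM(\Delta)$, the minimality defining $\WW_\ell$ (Lemma~\ref{first lemma} and Proposition~\ref{corollary smallest set}) gives $h(\SS_i) \ge h(\WW_\ell)$ for every $i$, hence $\val_T(\det(M|_\SS)) \ge m\, h(\WW_\ell)$, which is what we wanted.

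The hard part will be the LGV involution in this infinite-dimensional, $\sigma_{\Frob}$-twisted setting: one must verify that the tail-swap is a convergent, canonically defined, sign-reversing involution on the entire double sum. Without this cancellation, the naive approach of conjugating $N$ by the diagonal matrix $\diag(T^{\lfloor w(Q)\rfloor})_{Q\in\MM(\Delta)}$ only produces $\val_T(\det(M|_\SS)) \ge h(\SS)$, which falls short of the factor of $m$ required to match the normalisation of $\NP(f,T)_C$.
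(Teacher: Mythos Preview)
Your approach is correct and, once unpacked, is essentially the paper's own proof. The paper reaches the expression
\[
u_\ell(T)=\sum_{\SS_0,\dots,\SS_{m-1}\in\mathscr M_\ell}\ \det\Big(\prod_{j=0}^{m-1}\big[\ \cdot\ \big]_{\sigma_{\Frob}^j(N)}\Big)
\]
by quoting a Cauchy--Binet identity from \cite{ren-wan-xiao-yu}, and then bounds each product of $\ell\times\ell$ determinants using Corollary~\ref{NQQ'} to obtain $\val_T\ge \sum_j h(\SS_j)\ge m\,h(\WW_\ell)$. Your LGV involution is precisely the combinatorial proof of that Cauchy--Binet step: the tail-swap at the earliest collision is exactly what forces each intermediate $\SS_i$ to be a genuine $\ell$-element \emph{set}, after which your telescoping estimate and appeal to Proposition~\ref{corollary smallest set} match the paper line for line. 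So there is no genuinely different idea here, only a different packaging of the same cancellation.

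Two remarks on the packaging. First, your worry about ``convergence in the infinite-dimensional, $\sigma_{\Frob}$-twisted setting'' is unnecessary: the argument is purely $T$-adic, and modulo any fixed power of $T$ only finitely many paths contribute (since $\val_T(N_{Q',Q})\to\infty$ as $Q'\to\infty$), so the involution is a finite, honest bijection at each level. The Frobenius twists are constant along time-slices, hence preserved by the tail-swap. Second, your diagnosis that the ``naive'' row-by-row bound on $M=\psi^m$ loses a factor of $m$ is correct, and this is exactly why the paper invokes Cauchy--Binet rather than estimating entries of $M$ directly; but since that identity is available off the shelf, the LGV machinery, while valid, is more than is needed.
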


\begin{proof}
	We first recall the definition of $u_{\ell}$ in \eqref{E:Cfstar}, and 
	write $N$ for the standard matrix of $\psi_p\circ E_f$ corresponding to the basis  $\{ \ux^{Q}\}_{Q\in \MM(\Delta)}$ of the Banach space $\mathbf B$. By \cite[Corollary~3.9]{ren-wan-xiao-yu}, we know that the standard matrix of $\psi^n$ corresponding to the same basis is equal to $\sigma_{\Frob}^{m-1}(N)\circ \sigma_{\Frob}^{m-2}(N)\circ\cdots \circ N.$ 
	By \cite[Proposition~4.6]{ren-wan-xiao-yu}, for every $\ell \in \NN$ we have \begin{equation}
	\label{E:expression of char power series}
	\begin{split}
	u_{\ell}(T)
	&=\sum_{\substack{\{Q_{0,0}, Q_{0,1}, \dots, Q_{0,\ell-1}\}\in \mathscr M_\ell\\\{Q_{1,0}, Q_{1,1}, \dots, Q_{1,\ell-1}\}\in \mathscr M_\ell\\\vdots\\\{Q_{{m-1},0}, Q_{{m-1},1}, \dots, Q_{{m-1},\ell-1}\}\in \mathscr M_\ell}} \det\bigg(\prod\limits_{j=0}^{m-1}\left[ \begin{array}{cccccccccc}
	Q_{j+1,0} & Q_{j+1,1} &\cdots&Q_{j+1,\ell-1} \\
	Q_{j,0} & Q_{j,1} &\cdots&Q_{j,\ell-1}  \end{array} \right]_{\sigma_{\Frob}^{j}(N)}\bigg),     \\
	\end{split}
	\end{equation}
	where $Q_{m,i}:=Q_{0,i}$ for each $0\leq i\leq \ell-1$.
	
	We set $\SS_j=\{Q_{j,0}, Q_{j,1}, \dots, Q_{j,\ell-1}\}$, where $\SS_m=\SS_0$, then 
	\begin{equation}\label{explicit of u}
	\begin{split}
	&\val_T\Bigg(\det\Big(\prod\limits_{j=0}^{m-1}\left[ \begin{array}{cccccccccc}
	Q_{j+1,0} & Q_{j+1,1} &\cdots&Q_{j+1,\ell-1} \\
	Q_{j,0} & Q_{j,1} &\cdots&Q_{j,\ell-1}  \end{array} \right]_{\sigma_{\Frob}^{j}(N)}\Big)\Bigg)\\
	=&\val_T\Big(\prod_{j=0}^{m-1}\det\Big(\sigma_{\Frob}^j(e_{pQ'-Q})\Big)_{Q,Q'\in \SS_{j+1}\times\SS_j}\Big)\\
	\geq &\sum_{j=0}^{m-1}\min_{\tau_j\in \Iso(\SS_{j+1},\SS_j)}\Big\{ \sum_{Q\in \SS_{j+1}}\val_T(\sigma_{\Frob}^j(e_{p\tau_j(Q)-Q}))\Big\} \\
			\geq &\sum_{j=0}^{m-1}\min_{\tau_j\in \Iso(\SS_{j+1},\SS_j)}\Big\{ \sum_{Q\in \SS_{j+1}}\lfloor w(p\tau_j(Q))\rfloor-\lfloor w(Q)\rfloor\Big\} \\
	=& \sum_{Q\in \biguplus\limits_{j=0}^{m-1} \SS_{j}}(\lfloor w(pQ)\rfloor-\lfloor w(Q)\rfloor)= h(\biguplus_{j=0}^{m-1} \SS_{j}).
	\end{split}
	\end{equation} 
	
	Therefore, we deduce
	\begin{equation}\label{xk}
	\frac{\val_T(u_{\ell}(T))}{m}\geq \frac{\min\limits_{\SS_0,\dots,\SS_{m-1}\in \mathscr M_\ell} h(\biguplus\limits_{j=0}^{m-1} \SS_{j})}{m}=\min\limits_{\SS\in \mathscr M_\ell}h(\SS)=h(\WW_\ell),
	\end{equation}
which implies that $\NP(T,f)_C$ lies on or above $\IHP(\Delta)$.
\end{proof}
\begin{corollary}\label{corollary specialization}
	Let $\chi:\ZZ_p\to \CC_p^\times$ be a finite character of conductor $p^{m_\chi}$.  
	The improved Hodge polygon is a lower bound of the normalized Newton polygon $\NP(f, \chi)_C$.

\end{corollary}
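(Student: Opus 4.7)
The approach is to deduce this corollary directly from Proposition~\ref{IHP is below NPchi} by specializing $T$ to $\chi(1) - 1$ via the formula \eqref{specialization}, which reads $C_f^*(\chi, s) = C_f^*(T, s)\big|_{T = \chi(1) - 1}$. Under this substitution, the $i$-th coefficient of $C^*_f(\chi, s)$ is exactly $u_i(\chi(1) - 1)$, so the desired bound on $\NP(f,\chi)_C$ reduces to a lower bound on the $p^m$-adic valuation of $u_i(\chi(1) - 1)$, where $m = m(f)$.

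First I would record the valuation of $\chi(1) - 1$. Since $\chi$ has conductor $p^{m_\chi}$, the value $\chi(1)$ is a primitive $p^{m_\chi}$-th root of unity, and the standard ramification theory of cyclotomic extensions yields
\[
\val_{p^m}(\chi(1) - 1) = \frac{1}{m(p-1)p^{m_\chi - 1}}.
\]
Next, Proposition~\ref{IHP is below NPchi} asserts that $\NP(f,T)_C$ lies above $\IHP(\Delta)$; translating through the normalization in the definition of $\NP(f,T)_C$, this is equivalent to the inequality
\[
\val_T(u_i(T)) \geq m \cdot y_i
\]
for every $i$, where $y_i$ denotes the height of $\IHP(\Delta)$ at abscissa $i$.

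Combining these two facts, and using that $u_i(T) \in \ZZ_p\llbracket T\rrbracket$ has coefficients of nonnegative $p$-adic valuation, one obtains
\[
\val_{p^m}\bigl(u_i(\chi(1) - 1)\bigr) \geq \val_T(u_i(T)) \cdot \val_{p^m}(\chi(1) - 1) \geq \frac{y_i}{(p-1)p^{m_\chi - 1}}.
\]
Multiplying through by the normalization factor $p^{m_\chi - 1}(p-1)$ prescribed by Definition~\ref{definition for NP chi} yields exactly $y_i$, the height of $\IHP(\Delta)$ at $i$, completing the bound. Since the substantive $T$-adic estimate is already built into Proposition~\ref{IHP is below NPchi}, this specialization step is purely mechanical; no real obstacle arises beyond keeping track of the two different valuation normalizations.
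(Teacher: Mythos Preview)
Your proof is correct and follows essentially the same approach as the paper: specialize $T$ to $\chi(1)-1$ and use that this can only push the Newton polygon up relative to the $T$-adic one already controlled by Proposition~\ref{IHP is below NPchi}. The paper compresses this into the single sentence ``specializing will only increase the valuations,'' whereas you have spelled out the cyclotomic valuation of $\chi(1)-1$ and the normalization factors explicitly; the content is identical.
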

\begin{proof}
Note that specializing $T$ with $\chi(1)-1$ does not lower the polygon. Therefore, for every nontrivial finite character $\chi$, the Newton polygon $\NP(f, \chi)_C$ lies on or above $\NP(f, T)_C$. Hence, $\NP(f, \chi)_C$ lies on or above $\IHP(\Delta)$, when combined with Proposition~\ref{IHP is below NPchi}. 
\end{proof}
\begin{corollary}\label{first corollary}
For every $k\geq 1$ we have $$u_{\mathbbm x_k^{\pm}}\equiv \prod\limits_{j=0}^{m-1}\sigma_{\Frob}^j\Big(\det(e_{pQ'-Q})_{Q,Q'\in \Delta_k^\pm} \Big) \pmod {T^{mh(\Delta^\pm_k)+1}}.$$
\end{corollary}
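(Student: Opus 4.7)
The plan is to start from the explicit sum formula \eqref{E:expression of char power series} for $u_{\mathbbm x_k^\pm}(T)$ and prune every summand whose $T$-valuation strictly exceeds $mh(\Delta_k^\pm)$. Concretely, each summand is labelled by a tuple $(\SS_0,\dots,\SS_{m-1})\in\mathscr M_{\mathbbm x_k^\pm}^m$, and the chain of estimates carried out in \eqref{explicit of u} inside the proof of Proposition~\ref{IHP is below NPchi} shows that the $T$-valuation of the corresponding term is at least $h\bigl(\biguplus_{j=0}^{m-1}\SS_j\bigr)$. The corollary thus reduces to identifying exactly which tuples make this lower bound equal to $mh(\Delta_k^\pm)$, and confirming that for such tuples the resulting summand has the stated form.

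By additivity of $h$, together with Lemma~\ref{first lemma} and Proposition~\ref{corollary smallest set}(1), the minimum of $h\bigl(\biguplus_j\SS_j\bigr)$ equals $m\cdot h(\WW_{\mathbbm x_k^\pm})=m\cdot h(\Delta_k^\pm)$, and it is attained exactly when each $\SS_j\in\mathscr M_{\mathbbm x_k^\pm}$ individually minimizes $\sum_{Q\in\SS_j}w(Q)$. For our particular cardinalities, the minimizer $\WW_\ell$ is in fact \emph{unique} and equals $\Delta_k^\pm$: writing $Q=\sum_i z_i\mathbf V_i$ with $w(Q)=\max_i z_i$ and using that $\Delta_k^-=k\Delta^\circ\cap\ZZ^n$ forces $z_i<k$ for all $i$ whereas $\Delta_k^+=k\Delta\cap\ZZ^n$ allows $z_i=k$, one checks that $w(Q)<k$ on $\Delta_k^-$, $w(Q)=k$ on $\Delta_k^+\setminus\Delta_k^-$, and $w(Q)>k$ on $\MM(\Delta)\setminus\Delta_k^+$. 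Hence $\Delta_k^-$ (respectively $\Delta_k^+$) is the unique subset of cardinality $\mathbbm x_k^-$ (respectively $\mathbbm x_k^+$) of minimal total weight. Modulo $T^{mh(\Delta_k^\pm)+1}$ the only summand surviving in \eqref{E:expression of char power series} is therefore the one with $\SS_0=\SS_1=\cdots=\SS_{m-1}=\Delta_k^\pm$.

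For this distinguished tuple, each factor $\bigl[\cdots\bigr]_{\sigma_\Frob^j(N)}$ in \eqref{E:expression of char power series} is a square submatrix indexed by $\Delta_k^\pm\times\Delta_k^\pm$, so the determinant of the product factors as the product of determinants, each of which, by Lemma~\ref{N} and the commutativity of $\sigma_\Frob$ with $\det$, equals $\sigma_\Frob^j(E(\hat a_\calO\pi))^{\mathbbm x_k^\pm}\cdot\sigma_\Frob^j\bigl(\det(e_{pQ'-Q})_{Q,Q'\in\Delta_k^\pm}\bigr)$. Since $E(\hat a_\calO\pi)\equiv 1\pmod T$ while the product of the $\sigma_\Frob^j$-determinants already carries $T$-valuation at least $mh(\Delta_k^\pm)$, multiplication by this unit prefactor contributes nothing modulo $T^{mh(\Delta_k^\pm)+1}$, yielding the stated congruence. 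The only genuinely delicate point is the uniqueness of $\WW_{\mathbbm x_k^\pm}$ in the previous paragraph: a potential worry only if there were ties in the weight stratification at the slice $w=k$, which the strict separation $w<k$ on $\Delta_k^-$ versus $w=k$ on $\Delta_k^+\setminus\Delta_k^-$ rules out. The rest of the argument is a direct extraction from formulas already established.
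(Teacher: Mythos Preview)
Your proof is correct and follows the same approach as the paper's: both extract from the estimate chain \eqref{explicit of u} that the only tuple $(\SS_0,\dots,\SS_{m-1})$ achieving the minimal $T$-valuation $mh(\Delta_k^\pm)$ is the constant tuple $\SS_j=\Delta_k^\pm$, after which the surviving summand factors as the stated product. You spell out two points the paper leaves implicit --- the uniqueness of $\WW_{\mathbbm x_k^\pm}$ via the strict weight separation at level $k$, and the absorption of the unit prefactor $E(\hat a_\calO\pi)^{\mathbbm x_k^\pm}$ --- but these are routine elaborations of the same argument rather than a different route.
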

\begin{proof}
 By Proposition~\ref{corollary smallest set}, when $\ell=\mathbbm x_k^{\pm}$, the equalities hold in \eqref{explicit of u} if and only if \[\SS_j=\Delta^\pm_k\ \textrm{for every}\ 0\leq j\leq m-1.\qedhere\]
\end{proof}

	 \section{The generic Newton polygon}\label{section 4} 
	 In this section, we will prove Theorem~\ref{generic newton polygon}, which is given by a sequence of sufficient statements.
	 
%	 Most results in this section apply for all $1\leq \ell\leq n$. Therefore, without additional instruction, we put $\Delta'$ to be an $\ell$-paralleltope in $\RR^n$ as $$\Delta'=\Big\{\sum_{i=1}^{\ell}z_i'V_i'\;\big|\;0\leq i\leq 1\Big\},$$
%	 where $\overrightarrow{\calO V'_1},\overrightarrow{\calO V'_2},\dots,\overrightarrow{\calO V'_{\ell}}$ are linearly independent.
	 
	 Recall that $\widetilde e_{Q}(T)$ is defined in \eqref{equation: E Delta}, and satisfies $\val_T(\widetilde{e}_{Q}(T))\geq \lfloor w(pQ)\rfloor-\lfloor w(Q)\rfloor$ for all $Q \in \MM(\Delta)$ (see Lemma~\ref{L:estimate of Ef(x)} (2)).
	 
	 \begin{notation}\label{definition of u}
	 	We put \begin{equation}\label{tildeu}
	 	\det(I-\widetilde{e}_{pQ'-Q}s)_{Q,Q'\in \MM(\Delta)}=\sum_{i=0}^{\infty}\widetilde{u}_i(T)s^i\in \Zp[\widetilde a_P; P\in \Delta^+\backslash \{\calO\}][\![s]\!].
	 	\end{equation}

	 \end{notation}
	 Similar to \eqref{xk}, we have 
	 	\begin{equation}\label{equation xk}
	 \val_T(\widetilde u_{\mathbbm x_k^\pm}(T))\geq h(\Delta^\pm_k),
	 \end{equation}
	 which allows us to put
	 \begin{equation}\label{tildeui}
	 \widetilde{u}_{\mathbbm x_k^\pm}(T):=\sum_{i=h(\Delta_k^\pm)}^{\infty} \widetilde{u}_{\mathbbm x_k^\pm,i}T^{i}\quad \textrm{for}\  \widetilde{u}_{\mathbbm x_k^\pm,i}\in \Zp[\widetilde a_P; P\in \Delta^+\backslash \{\calO\}]
	 \end{equation}
	 with an explicit lower bound of the summation.
	 
%	  \begin{proposition}\label{proposition}
%	 	There are a Zariski open subset $O_\Zar\subseteq  (\overline\FF_p)^{\mathbbm x'_1}$ such that \\
%	 	if $$(a_{P})_{P\in \Delta^+}\in O_{\Zar}\ \text{and}\ f(\ux)=\sum\limits_{P\in \Delta^+}a_{P}\ux^P,$$ then $$\widetilde u_{\mathbbm x_k}|_{\widetilde{a}_P=a_{P}}\not\equiv 0 \pmod{(p, T^{h(\overline\Delta^\Int_k)+1})}\ \textrm{for every}\ 1\leq k\leq n-1.$$
%	 \end{proposition}

%We will show in Proposition~\ref{proposition 1} that the following proposition leads to Theorem~\ref{generic newton polygon}, and prove Theorem~\ref{theorem for L} as a corollary.

\begin{proposition}\label{proposition}
For every integer $1\leq k\leq n+2$, we have
 \begin{equation}\label{tilde u}
 \widetilde u_{\mathbbm x_k^\pm,h(\Delta_k^\pm)}\not\equiv 0 \pmod{p}.
 \end{equation}
\end{proposition}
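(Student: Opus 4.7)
The plan has four stages. First, I isolate the contribution to the leading $T$-coefficient of $\widetilde{u}_{\mathbbm{x}_k^\pm}$. By Proposition~\ref{corollary smallest set}(2), the unique set of cardinality $\mathbbm{x}_k^\pm$ in $\mathscr M_{\mathbbm{x}_k^\pm}$ minimizing $h$ is $\Delta_k^\pm$ itself. Running the argument of Corollary~\ref{first corollary} with the universal coefficients $\widetilde{a}_P$ in place of $\hat{a}_P$ yields
\[
\widetilde{u}_{\mathbbm{x}_k^\pm}(T) \equiv \pm \det\bigl(\widetilde{e}_{pQ'-Q}(T)\bigr)_{Q,Q' \in \Delta_k^\pm} \pmod{T^{h(\Delta_k^\pm)+1}},
\]
so the claim reduces to showing that the coefficient of $T^{h(\Delta_k^\pm)}$ in this determinant, viewed as a polynomial in the $\widetilde{a}_P$, is nonzero modulo $p$.

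Next I would expand
\[
\det\bigl(\widetilde{e}_{pQ'-Q}\bigr) = \sum_{\tau \in \Iso(\Delta_k^\pm)} \sgn(\tau) \prod_{Q \in \Delta_k^\pm} \widetilde{e}_{p\tau(Q) - Q}(T).
\]
By Corollary~\ref{NQQ'}, each summand has $T$-valuation at least $h(\Delta_k^\pm)$, with equality only when every factor $\widetilde{e}_{p\tau(Q)-Q}$ simultaneously attains the valuation bound $\lfloor w(p\tau(Q))\rfloor - \lfloor w(Q)\rfloor$. Substituting the explicit expansion of each $\widetilde{e}_R$ from the proof of Lemma~\ref{L:estimate of Ef(x)}(2), the coefficient of $T^{h(\Delta_k^\pm)}$ becomes a signed sum over pairs $\bigl(\tau, (\vec{\jmath}_Q)_{Q \in \Delta_k^\pm}\bigr)$ in which $\sum_P (\vec{\jmath}_Q)_P \cdot P = p\tau(Q) - Q$ saturates the weight inequality in each factor.

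The heart of the argument is to identify a distinguished permutation $\tau^{*}$ and a distinguished monomial $M_k$ in the $\widetilde{a}_P$ such that only $\tau^{*}$ produces $M_k$ at $T$-valuation $h(\Delta_k^\pm)$. Using the residue map $\eta$ of Notation~\ref{eta} and the decomposition $Q = P_0 + Q_1$ with $P_0 \in \Delta^-$ and $Q_1 \in \Lambda_\Delta$, I would characterize $\tau^{*}$ by requiring that $p\tau^{*}(Q) - Q$ always lie in $\Lambda_\Delta$ with minimal weight; by Remark~\ref{lamme property of w} this forces $\tau^{*}(Q)$ to be the unique element of $\Delta_k^\pm$ whose $\Delta^-$-component equals $\eta^{-1}(P_0)$ and whose $\Lambda_\Delta$-component is determined. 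For $\tau^{*}$, each factor $\widetilde{e}_{p\tau^{*}(Q) - Q}$ expands using only the vertex variables $\widetilde{a}_{\mathbf{V}_1}, \dots, \widetilde{a}_{\mathbf{V}_n}$ (plus a single $\widetilde{a}_{P_0}$ per $P_0 \in \Delta^-$ once $k \geq 2$), and contributes $M_k$ with a coefficient that is, up to sign, an explicit product of Artin--Hasse coefficients $c_j$ and multinomial factors — all $p$-adic units under Hypothesis~\ref{hypothesis}.

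The main obstacle is the uniqueness claim: no permutation $\tau \neq \tau^{*}$ produces $M_k$ at the minimal $T$-valuation. The argument would use the strict inequality \eqref{eqQQ'} from the proof of Lemma~\ref{first lemma}, which crucially depends on $p > (n+4)D$, to show that any deviation either drops the valuation below $h(\Delta_k^\pm)$ in some factor or else forces the introduction of a variable $\widetilde{a}_P$ outside the support of $M_k$. The case analysis runs over $k \in \{1, 2, \ldots, n+2\}$: for small $k$ the permutation $\tau^{*}$ is essentially the identity on the shell structure, while for larger $k$ it interacts non-trivially with the shells $\Delta_k^\pm \setminus \Delta_{k-1}^\pm$, and ruling out competing permutations requires delicate bookkeeping of the multinomial exponents. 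The choice of upper bound $k \leq n+2$ is motivated downstream by Lemma~\ref{lemma polynomial for h}: the polynomiality of $h(\Delta_k^\pm)$ in $k$ of degree $n+1$ allows the result for $k \leq n+2$ to propagate to all $k$ via polynomial interpolation in the subsequent section.
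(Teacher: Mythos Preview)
Your first two stages are correct and match the paper: the reduction to showing that the $T^{h(\Delta_k^\pm)}$-coefficient of $\det(\widetilde e_{pQ'-Q})_{Q,Q'\in\Delta_k^\pm}$ is nonzero modulo $p$ is exactly Lemma~\ref{lemma for res} combined with \eqref{eqQQ}.

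The third and fourth stages, however, contain a genuine gap. Your specialization to the singleton vertex variables $\widetilde a_{\mathbf V_1},\dots,\widetilde a_{\mathbf V_n}$ does not in general achieve the minimal $T$-valuation. For a point $R=\sum_i m_i\mathbf V_i\in\Lambda_\Delta$ one has $\lceil w(R)\rceil=\max_i m_i$, and attaining $\val_T(\widetilde e_R)=\max_i m_i$ requires decomposing $R$ into $\max_i m_i$ points of $\Delta^+\setminus\{\calO\}$; with only the $\mathbf V_i$ available the best one can do is $\sum_i m_i$, which is strictly larger whenever more than one $m_i$ is positive. The paper therefore specializes to \emph{all} vertex variables $\widetilde a_{\mathbf V_S}$, further identified by $\widetilde a_{\mathbf V_S}=\widetilde a_{\#S}$ (Notation~\ref{Delta(I)}(2)); this is exactly what makes Lemma~\ref{lemma valuation of P} work.

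More seriously, the claim that a \emph{single} permutation $\tau^*$ isolates a monomial $M_k$ is false even after the correct specialization. Already for $n=2$, $P_0=\calO$, $I=\emptyset$, the set $\{(1,1),(2,2)\}\subset\Delta_3^-$ (in $\Lambda_\Delta$-coordinates) has both the identity and the swap contributing the \emph{same} monomial $\widetilde a_2^{3p-3}\pi^{3p-3}$ at minimal valuation, with coefficients $c_{p-1}c_{2p-2}$ and $-c_{p-2}c_{2p-1}$ respectively; one must show their difference is a $p$-unit. The paper handles this by introducing a reverse-lexicographic partial degree on $\ZZ_p[\widetilde a_1,\dots,\widetilde a_n]$ (Definition~\ref{definition of partial order}), proving that the leading term factors as a product over the strata $\SS\in\mathscr S(\Delta_k^\pm(I,P_0))$ (Proposition~\ref{lemma section 5 (1)}), and then computing each resulting block determinant in Artin--Hasse coefficients via a recursive row-reduction using the congruence $(w-j)c_{w+ip-j}-c_{w+ip-j-1}\equiv c_{w+(i-1)p-j}\pmod p$ (Lemma~\ref{lemma M=}). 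This determinant computation is the technical heart of the argument and is not bypassed by any choice of $\tau^*$.
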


We will give its proof in \S\ref{section 5}.

	 \begin{proposition}\label{proposition 1}
	 	Proposition~\ref{proposition} implies Theorems~\ref{generic newton polygon} and ~\ref{theorem for L}.
	 \end{proposition}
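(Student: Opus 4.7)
The plan is to convert the arithmetic input of Proposition~\ref{proposition}---the non-vanishing modulo $p$ of the leading $T$-coefficients of $\widetilde u_{\mathbbm x_k^\pm}$ for $1\le k\le n+2$---into geometric information about the relevant Newton polygons: first about $\NP(f,T)_C$, then about $\NP(f,\chi)_C$ via specialization, and finally about the $L$-function slope distribution via \eqref{equation LC}.

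I would begin by forming the Zariski open subset
$$O_{\Zar}\;:=\;\bigcap_{k=1}^{n+2}\Bigl\{f\in\mathcal F(\Delta)\;\Big|\;\widetilde u_{\mathbbm x_k^\pm,h(\Delta_k^\pm)}\big|_{\widetilde a_P=\hat a_P}\not\equiv 0\pmod p\Bigr\}\subset\mathcal F(\Delta),$$
nonempty by Proposition~\ref{proposition}: it is an intersection of $2(n+2)$ principal opens in the coefficient space cut out by non-zero polynomials in the universal coefficients. For $f\in O_{\Zar}$, the Teichm\"uller specialization $\widetilde a_P\mapsto\hat a_P$ preserves non-vanishing modulo $p$; combined with Corollary~\ref{first corollary} and the lower bound $\val_T\bigl(u_{\mathbbm x_k^\pm}(T)\bigr)\ge m h(\Delta_k^\pm)$, this forces $\val_T\bigl(u_{\mathbbm x_k^\pm}(T)\bigr)=m h(\Delta_k^\pm)$ exactly, so $\NP(f,T)_C$ meets the lower bound $\IHP(\Delta)$ (from Proposition~\ref{IHP is below NPchi}) at $(\mathbbm x_k^\pm,h(\Delta_k^\pm))$ for every $1\le k\le n+2$.

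The main obstacle is to propagate this matching from $1\le k\le n+2$ to all $k\ge 0$. I would exploit the polynomial-in-$k$ structure uncovered in Lemma~\ref{lemma polynomial for h}: the height $h(\Delta_k^\pm)$ is a polynomial in $k$ of degree exactly $n+1$, and the block decomposition
$$\Delta_k^-=\Delta^-\sqcup\coprod_{P_0\in\Delta^-}\coprod_{\emptyset\ne I\subseteq\{1,\dots,n\}}\coprod_{\ell=1}^{k-1}\bigl(P_0+U(\ell;I)\bigr)$$
used in its proof decomposes the leading $T$-coefficient of $u_{\mathbbm x_k^\pm}(T)$ into contributions indexed by the fixed building blocks $\Delta^\pm$ and $U(1;I)$, with multiplicities polynomial in $k$ of degree at most $n+1$. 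A careful analysis---matching $N$-minors over these blocks with the $n+2$ ``seed'' cases already handled, and invoking that a polynomial of degree $\le n+1$ is determined modulo $p$ by its values at $n+2$ consecutive integers---should show that non-vanishing modulo $p$ at $k=1,\ldots,n+2$ forces non-vanishing at every $k\ge 0$. This propagation is the technical heart of the deduction.

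Once $\NP(f,T)_C=\IHP(\Delta)$ at every $(\mathbbm x_k^\pm,h(\Delta_k^\pm))$, passage to an arbitrary finite character $\chi$ of conductor $p^{m_\chi}$ is immediate: specializing $T=\chi(1)-1$ multiplies $T$-adic valuations by $\val_p(\chi(1)-1)=1/(p^{m_\chi-1}(p-1))$, which is precisely the normalization built into $\NP(f,\chi)_C$, and the $p$-adic-unit leading coefficient survives the specialization. Combined with Corollary~\ref{corollary specialization} (which supplies the opposite inequality), this yields Theorem~\ref{generic newton polygon}. For Theorem~\ref{theorem for L}, I would unwind the product formula
$$L^*_f(\chi,s)^{(-1)^{n-1}}=\prod_{j=0}^n C^*_f(\chi,p^{m(f)j}s)^{(-1)^j\binom{n}{j}}$$
from \eqref{equation LC}: the slope multiset of $C^*_f(\chi,s)$ is now completely prescribed by the vertex data $\{(\mathbbm x_k^\pm,h(\Delta_k^\pm))\}$, and the alternating binomial combination in $j$ telescopes by inclusion--exclusion into the sums $\sum_{t=0}^{i_1}(-1)^t\binom{n}{t}$ over $\mathbbm x^\pm_{(i_1-t)p^{m_\chi-1}+i_2}$ asserted in Theorem~\ref{theorem for L}.
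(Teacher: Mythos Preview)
Your setup (the Zariski open $O_{\Zar}$, the specialization argument, and the final unwinding of \eqref{equation LC}) is essentially what the paper does. The genuine gap is in your propagation step from $1\le k\le n+2$ to all $k$.

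You propose to show directly that the leading $T$-coefficient of $u_{\mathbbm x_k^\pm}(T)$ is a $p$-adic unit for \emph{every} $k$, by decomposing $\Delta_k^\pm$ via the blocks $P_0+U(\ell;I)$ of Lemma~\ref{lemma polynomial for h} and invoking that ``a polynomial of degree $\le n+1$ is determined by its values at $n+2$ points.'' But the leading coefficient is a single large determinant, and there is no reason it should factor along this block decomposition (the factorization established later in \S\ref{section 5} requires passing to $\widetilde e^{\res}$ and is used to prove Proposition~\ref{proposition}, not this implication). More seriously, even if you could package things as a polynomial in $k$ modulo $p$, being \emph{determined} by $n+2$ values is not the same as \emph{non-vanishing propagating}: a degree-$(n+1)$ polynomial that is nonzero at $k=1,\dots,n+2$ can perfectly well vanish at $k=n+3$.

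The paper's propagation argument is completely different and does not attempt to control $u_{\mathbbm x_k^\pm}$ directly for large $k$. Instead it exploits the fact that $L_f^*(\chi_0,s)^{(-1)^{n-1}}$ is a \emph{polynomial} of fixed degree $n!\Vol(\Delta)$: writing $\mathbbm t_i$ for the number of its slopes below $i(p-1)$, the product formula \eqref{C(chi, s)} expresses the count of $C$-slopes below $k(p-1)$ as $\sum_{\ell=1}^k\sum_j\binom{n+j-1}{n-1}(\mathbbm t_{\ell-j+1}-\mathbbm t_{\ell-j})$, a polynomial in $k$ of degree $n$ in the unknowns $\mathbbm t_i$. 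The hypothesis forces this polynomial to equal $\mathbbm x_k^-=k^n\Vol(\Delta)$ for $k=1,\dots,n+2$, hence identically; the same trick with heights (degree $n+1$, via Lemma~\ref{lemma polynomial for h}) gives the $y$-coordinates. So the polynomial interpolation is applied to an \emph{equality} of two polynomials, not to a non-vanishing statement, and the finiteness of the $L$-function is what makes it work. You need to replace your propagation mechanism with this one.
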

% The proof of Proposition~\ref{proposition 1} is given after several lemmas.

% 
%We first fix some character $\chi_0:=\Zp\to\CC_p^\times$ of conductor $p$, and denote the Newton slopes of $\NP_L(f,\chi)$ by $$\Big(\alpha_1,\alpha_2,\dots,\alpha_{n!\Vol(\Delta)}\Big)$$ in a non-descending order and put $$X_k:=\sum\limits_{j=1}^{\mathbbm{t}_k}\alpha_j,$$
% where $\mathbbm t_k:=\sum\limits_{i=0}^k (-1)^i \binom{n}{i} \mathbbm x_{k-i}$. 
% We denote 	$$C_f^*(\chi,s) := \sum_{k=0}^\infty u_{\chi,k} s^k \in \ZZ_p\llbracket T. s\rrbracket.$$
% 
% 

% 	
% 	,Assume that for each $0\leq k\leq n$, the Newton slopes of $\NP_L(f,\chi)$ satisfies
% 	\begin{equation}\label{equation Newton slopes}
% 	\begin{cases}
% 	\alpha_i <k(p-1)& \textrm{if}\ i\leq \mathbbm t_k,\\
% 	\alpha_i \geq k(p-1)& \textrm{if}\ i> \mathbbm t_k.
% 	\end{cases}
% 	\end{equation}
%Then for every $\ell\geq n$, we have $$u_{\chi,\mathbbm x_\ell}=\sum_{i=0}^{n+1}B_i \ell^i.$$

% \begin{proof}
%  Consider the relation between the characteristic function and $L$-function associated to $\chi$ and $f$, we have
% \begin{equation}\label{equation CL}
% C^*_f(\chi, s)^{(-1)^{n-1}}=\Big(\prod\limits_{j=0}^{\infty}L^*_f(T, q^js)^{\binom{n+j-1}{n-1}}\Big).
% \end{equation}
% Therefore, there are exactly   
% $$\sum_{j=\ell-n+1}^{\ell-1}\binom{n+j-1}{j}X_{\ell-j}+\sum_{j=1}^{\ell-n}\binom{n+j-1}{n-1}n!\Vol(\Delta)$$
% segment in $\NP_C(\chi, f)$ with slopes strictly less than $\ell$.
%
% 
% \end{proof}
% 

% It is easy to show that \ref{equation Newton slopes} follows directly from \eqref{equation CL} and \ref{property (b)}(b) above.

We give its proof after several lemmas.
\begin{lemma}\label{lemmafor1.7(1)}
	Assume Proposition~\ref{proposition}. Then there exists a Zariski open subset $O_\Zar\subseteq \calF(\Delta)$ such that for every $f\in O_\Zar$ and every finite character $\chi_0$ of conductor $p$, the Newton polygon $\NP(f,\chi_0)_C$ passes though the point $(\mathbbm x_k^\pm,h(\Delta_k^\pm))$ for every $1\leq k\leq n+2$.
\end{lemma}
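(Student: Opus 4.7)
The plan is to identify $O_\Zar$ as the non-vanishing locus (modulo $p$) of the universal leading-coefficient polynomials $\widetilde u_{\mathbbm x_k^\pm, h(\Delta_k^\pm)}$, and then to transfer the resulting statement about $\NP(f,T)_C$ to the specialization $\NP(f,\chi_0)_C$ by observing that these leading coefficients remain $p$-adic units after substituting $T = \zeta_p - 1$.

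First I would unpack Notation~\ref{definition of u}. Expanding the universal determinant yields
\[
\widetilde u_{\mathbbm x_k^\pm}(T) = (-1)^{\mathbbm x_k^\pm}\sum_{\SS\in\mathscr M_{\mathbbm x_k^\pm}} \det(\widetilde e_{pQ'-Q})_{Q,Q'\in \SS},
\]
where each summand has $T$-valuation at least $h(\SS)\geq h(\Delta_k^\pm)$ by Lemma~\ref{L:estimate of Ef(x)}(2). Since $\mathbbm x_k^\pm$ corresponds to a vertex of $\IHP(\Delta)$, Proposition~\ref{corollary smallest set} combined with Lemma~\ref{first lemma} forces the minimum to be attained \emph{only} by the subset $\SS = \Delta_k^\pm$ (the weights of points beyond $\Delta_k^\pm$ are strictly larger). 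Consequently $\widetilde u_{\mathbbm x_k^\pm, h(\Delta_k^\pm)}$ coincides, up to sign, with the $T^{h(\Delta_k^\pm)}$-coefficient of $\det(\widetilde e_{pQ'-Q})_{Q,Q'\in \Delta_k^\pm}$, viewed as a polynomial in the universal variables $\{\widetilde a_P\}$. Proposition~\ref{proposition} then asserts that each of the $2(n+2)$ such polynomials is nonzero modulo $p$, so I let $O_\Zar$ be the intersection over $1\le k\le n+2$ and both signs of the loci where the specialization $\widetilde u_{\mathbbm x_k^\pm, h(\Delta_k^\pm)}(a_P)$ is nonzero in $\overline \FF_p$; a finite intersection of nonempty Zariski open subsets is a nonempty Zariski open subset of $\calF(\Delta)$.

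Next, for $f \in O_\Zar$ I invoke Corollary~\ref{first corollary} to get
\[
u_{\mathbbm x_k^\pm}(T) \equiv \prod_{j=0}^{m-1} \sigma_{\Frob}^j\bigl(\det(e_{pQ'-Q})_{Q,Q'\in\Delta_k^\pm}\bigr) \pmod{T^{mh(\Delta_k^\pm)+1}}.
\]
Since each of the $m$ factors has $T$-valuation exactly $h(\Delta_k^\pm)$ (by the identification above applied to the specialization $\widetilde a_P \mapsto \hat a_P$), the leading $T$-coefficient of the right-hand side equals $\prod_{j=0}^{m-1}\sigma_{\Frob}^j(c_k)$, where $c_k$ is $\pm\widetilde u_{\mathbbm x_k^\pm, h(\Delta_k^\pm)}(\hat a_P)$. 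By construction of $O_\Zar$ one has $c_k \not\equiv 0 \pmod p$, and because $\sigma_\Frob$ permutes $p$-adic units of $\ZZ_q$, the product is itself a $p$-adic unit. Hence $\val_T(u_{\mathbbm x_k^\pm}(T)) = mh(\Delta_k^\pm)$, so $\NP(f,T)_C$ touches its lower bound $\IHP(\Delta)$ at $x = \mathbbm x_k^\pm$ for every $1\le k\le n+2$.

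Finally I pass to $\chi_0$ via the specialization \eqref{specialization}: substitute $T \mapsto \zeta_p - 1$, which has $\val_p(\zeta_p - 1) = 1/(p-1)$. Because the $T$-leading coefficient of $u_{\mathbbm x_k^\pm}(T)$ is a $p$-adic unit, no cancellation can occur after substitution, and $\val_p\bigl(u_{\mathbbm x_k^\pm}(\zeta_p - 1)\bigr) = mh(\Delta_k^\pm)/(p-1)$. Under the normalization used for $\NP(f,\chi_0)_C$ (divide by $m$ and multiply by $p-1$ when $m_\chi = 1$), this corresponds exactly to Newton polygon height $h(\Delta_k^\pm)$ at $x = \mathbbm x_k^\pm$; combined with Corollary~\ref{corollary specialization} which forces $\NP(f,\chi_0)_C$ to lie above $\IHP(\Delta)$, we conclude that $\NP(f,\chi_0)_C$ passes through the $2(n+2)$ points in question. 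The main subtle point is the uniqueness of the minimizing subset (from Proposition~\ref{corollary smallest set}), which is what allows the $T$-leading coefficient of $u_{\mathbbm x_k^\pm}$ to be identified with a Frobenius-twisted product of a single specialized universal polynomial; once this is in hand, the rest is a routine specialization argument requiring only that $p$-adic units survive the substitution $T \mapsto \zeta_p - 1$.
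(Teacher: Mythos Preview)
Your proposal is correct and follows essentially the same approach as the paper: define $O_\Zar$ as the non-vanishing locus of the specialized leading universal coefficients $\widetilde u_{\mathbbm x_k^\pm,h(\Delta_k^\pm)}$, invoke Corollary~\ref{first corollary} to identify the $T^{mh(\Delta_k^\pm)}$-coefficient of $u_{\mathbbm x_k^\pm}$ with a norm of a $p$-adic unit, then specialize $T\mapsto \chi_0(1)-1$ and squeeze against the lower bound from Corollary~\ref{corollary specialization}. Your explicit justification that $\Delta_k^\pm$ is the \emph{unique} minimizing subset (via Proposition~\ref{corollary smallest set}) is a point the paper leaves implicit in its equation~\eqref{eqQQ}, but otherwise the arguments coincide.
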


 \begin{proof}
%	By Corollary~\ref{corollary specialization}, for every $f\in \mathcal F(\Delta)$ and every finite character $\chi:\ZZ_p\to\CC_p^\times$, the Newton polygon $\NP(f,\chi)_L$ is not below 
% 	$(\mathbbm x_k^\pm,h(\Delta_k^\pm))$ for $k\geq 1$.
% 	
% 	Therefore, we are left to show that for every $k\geq0$ there exists a $f\in \mathcal F(\Delta)$ such that $\NP(f,\chi)_L$ passes though $(\mathbbm x_k^\pm,h(\Delta_k^\pm))$.
% 	To this end, w
Since $\widetilde u_{\mathbbm x_k^\pm,h(\Delta_k^\pm)}\not\equiv 0 \pmod{p}$ for every $1\leq k\leq n+2,$
the subset
 	\begin{multline}\label{Ozar}
 	O_\Zar(\Delta):=
 	\Big\{f(\ux)=\sum_{P\in \Delta^+}a_P\ux^P\in \calF(\Delta):
 	 \widetilde u_{\mathbbm x^\pm_k,h(\Delta_k^\pm)}\Big|_{\widetilde{a}_P={\hat a}_{P}}\not\equiv 0 \pmod p\\ \textrm{for every}\ 1\leq k\leq n+2\Big\}
 	\end{multline}
 	 is a nonempty open subset of $\calF(\Delta)$.

 	Similar to Corollary~\ref{first corollary},  we have
 	\begin{equation}\label{eqQQ}
 	\det\left(\widetilde{e}_{pQ'-Q}\right)_{Q,Q'\in \Delta_k^\pm}\equiv\widetilde u_{\mathbbm x_k^\pm,h(\Delta_k^\pm)} T^{h(\Delta_k^\pm)}\mod (p, T^{h(\Delta_k^\pm)+1}).
 	\end{equation}

% 	such that $$\widetilde u_{\mathbbm x_k}|_{\widetilde{a}_P=\hat a_{P}}\not\equiv 0 \pmod{(p, T^{h(\overline\Delta^\Int_k)+1})}\ \textrm{for every}\ 1\leq k\leq n-1.$$
% 	$(a_{P})_{P\in \Delta^+}\in O_{\Zar}\ \text{and}\$
% 	
% 	there is  such that 
% 	if $$(a_{P})_{P\in \Delta^+}\in O_{\Zar}\ \text{and}\ f(\ux)=\sum\limits_{P\in \Delta^+}a_{P}\ux^P,$$ then $$\widetilde u_{\mathbbm x_k}|_{\widetilde{a}_P=a_{P}}\not\equiv 0 \pmod{(p, T^{h(\overline\Delta^\Int_k)+1})}\ \textrm{for every}\ 1\leq k\leq n-1.$$
% 	
% 	 $\widetilde{e}_{P}|_{\widetilde{a}_P=a_{P}}= e_{P}$, for every $k\leq n-1$ we have 
	
	Hence, for every $f(\ux)=\sum\limits_{P\in \Delta^+}a_P \ux^P\in O_\Zar(\Delta)$ and every $ 1\leq k\leq n+2$ we have 
\begin{multline}\label{equation for estimating u}
u_{\mathbbm x_k^\pm}\equiv \prod\limits_{j=0}^{m(f)-1}\sigma_{\Frob}^j\Big(\det(e_{pQ'-Q})_{Q,Q'\in \Delta_k^\pm} \Big)
=\prod\limits_{j=0}^{m(f)-1}\sigma_{\Frob}^j\Big(\det(\widetilde{e}_{pQ'-Q})_{Q,Q'\in \Delta_k^\pm}\Big|_{\widetilde{a}_{P}=\hat a_{P}} \Big)\\
\equiv \prod\limits_{j=0}^{m(f)-1}\sigma_{\Frob}^j\left(\widetilde u_{\mathbbm x_k,h(\Delta_k^\pm)}\Big|_{\widetilde{a}_{P}=\hat a_{P}}\right)T^{m(f)h(\Delta^\pm_k)}
\not\equiv 0 
\pmod {p,T^{m(f)h(\Delta^\pm_k)+1}},
\end{multline}
where $m(f)=[\FF_p(f):\FF_p].$

Combining \eqref{specialization} with Corollary~\ref{corollary specialization}, we get $$C_f^*(\chi_0,s)=C_f^*(T,s)\Big|_{T=\chi_0(1)-1}\quad \textrm{and}\quad u_{\mathbbm x_k^\pm}=\sum_{\ell=T^{mh(\Delta^\pm_k)}}^\infty u_{\mathbbm x_k^\pm,\ell}T^\ell. $$ 
Together with  \eqref{equation for estimating u},  these two equalities imply that 
for every $1\leq k\leq n+2$ the $p$-adic valuation of the coefficient of $s^{\mathbbm x_k^\pm}$ in $C_f^*(\chi_0,s)$ satisfies 
$$\val_p\left(u_{\mathbbm x_k^\pm}\Big|_{T=\chi_0(1)-1}\right)=m\val_p(\xi_p-1)h(\Delta^\pm)=\frac{mh(\Delta_k^\pm)}{p-1}.$$
It implies that $\NP(f,\chi_0)_C$ lies below $(\mathbbm x_k^\pm,h(\Delta_k^\pm))$ for every $1\leq k\leq n+2$.
Since $(\mathbbm x_k^\pm,h(\Delta_k^\pm))$ are \emph{vertices} of $\IHP(\Delta)$ which, by Corollary~\ref{corollary specialization}, is a lower bound for $\NP(f,\chi_0)_C$, we conclude that $\NP(f,\chi_0)_C$ must pass though the points $(\mathbbm x_k^\pm,h(\Delta_k^\pm))$ for every $1\leq k\leq n+2$.
\end{proof}

	\begin{lemma}\label{lemmafor1.7(2)}
		For a polynomial $f\in \mathcal F(\Delta)$ and a finite character $\chi_0$ of conductor $p$, if $\NP(f,\chi_0)_C$ passes though $(\mathbbm x_k^\pm,h(\Delta_k^\pm))$ for some $1\leq k\leq n+2$, then it passes
		$(\mathbbm x_k^\pm,h(\Delta_k^\pm))$ for all $k\geq 1$. 
	\end{lemma}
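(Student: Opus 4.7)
The plan is to leverage the expression \eqref{C(chi, s)} that presents $C_f^*(\chi_0, s)$ as an infinite product of shifted $L$-functions,
\[
C_f^*(\chi_0, s) \;=\; \prod_{j=0}^\infty L_f^*(\chi_0, p^{mj} s)^{(-1)^{n-1}\binom{n+j-1}{n-1}}.
\]
Because $L_f^*(\chi_0, s)^{(-1)^{n-1}}$ is a polynomial of degree $n!\Vol(\Delta)$ whose Newton slopes all lie in $[0, n)$ (as indicated by the slope ranges in Theorem~\ref{theorem for L}), every Newton slope of $C_f^*(\chi_0, s)$ has the form $\alpha + j$ where $\alpha$ is a slope of $L_f^*(\chi_0, s)$ and $j \in \ZZ_{\ge 0}$, occurring with combinatorial multiplicity $\binom{n+j-1}{n-1}$---a polynomial in $j$ of degree $n-1$.

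First I would translate the vertex-agreement hypothesis into a statement about slope multiplicities. Writing $m_L$ and $m_C$ for the slope multiplicity functions of $L_f^*(\chi_0, s)$ and $C_f^*(\chi_0, s)$ respectively, the shifted-product expression yields
\[
m_C(s) \;=\; \sum_{j\ge 0}\binom{n+j-1}{n-1}\,m_L(s-j).
\]
Since $m_L$ is supported in $[0, n)$ and $\binom{n+j-1}{n-1}$ is a polynomial in $j$ of degree $n-1$, the function $s\mapsto m_C(s)$ becomes a polynomial in $s$ of degree at most $n-1$ for $s\ge n$. Summing, the cumulative NP-height of $C_f^*(\chi_0, s)$ evaluated at $x=\mathbbm x_k^\pm$ is a polynomial in $k$ of degree at most $n+1$, with the small-$k$ boundary contributions absorbed into the lower-order coefficients.

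Now I would conclude by polynomial interpolation. By Lemmas~\ref{polynomialforx} and~\ref{lemma polynomial for h}, the IHP-height $h(\Delta_k^\pm)$ is also a polynomial in $k$ of degree exactly $n+1$. The hypothesis that $\NP(f,\chi_0)_C$ passes through $(\mathbbm x_k^\pm, h(\Delta_k^\pm))$ for $k=1,\ldots,n+2$ supplies $n+2$ evaluations at which the NP-height and $h(\Delta_k^\pm)$ agree. Since any polynomial of degree at most $n+1$ is uniquely determined by its values at $n+2$ points, the two polynomials in $k$ coincide, giving agreement at $(\mathbbm x_k^\pm, h(\Delta_k^\pm))$ for all $k\ge 1$.

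The main obstacle is rigorously establishing the polynomial-in-$k$ structure of the NP-heights for \emph{all} $k\ge 1$. The polynomial formula for $m_C(s)$ holds cleanly only for $s \ge n$, so the low-$k$ regime (where $\mathbbm x_k^\pm$ captures slopes below this threshold) must be shown not to disrupt polynomiality; equivalently, the boundary contributions must interpolate into the same polynomial of degree $\le n+1$, rather than producing piecewise behavior. A secondary delicacy is the parallel handling of the $-$ and $+$ vertex conditions together with the natural matching between the slope jumps of $\IHP(\Delta)$ at $k(p-1)$ and the indices $\mathbbm x_k^\pm$, so that the $n+2$ assumed coincidences indeed furnish enough independent equations to pin down the interpolating polynomial.
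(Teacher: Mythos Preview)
Your approach is essentially the paper's: exploit the product formula relating $C_f^*$ to shifts of $L_f^*$, then use polynomial interpolation in $k$. However, there is a genuine gap in the way you have set it up.

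The quantity that is \emph{a priori} polynomial in $k$ is not the NP-height of $C_f^*(\chi_0,s)$ at the abscissa $x=\mathbbm x_k^\pm$; it is the pair
\[
\Big(\#\{\text{NP slopes} < k(p-1)\},\ \sum_{\alpha<k(p-1)}\alpha\Big),
\]
computed from the $L$-side via your convolution formula. The NP-height at $x=\mathbbm x_k^-$ equals that slope-sum \emph{only if} the number of slopes below $k(p-1)$ is exactly $\mathbbm x_k^-$. For $k\le n+2$ this is granted by the hypothesis (since $(\mathbbm x_k^\pm,h(\Delta_k^\pm))$ are vertices of the lower bound $\IHP(\Delta)$, hence vertices of $\NP(f,\chi_0)_C$), but for $k>n+2$ it is exactly what you are trying to prove. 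Your sentence ``the cumulative NP-height \dots\ at $x=\mathbbm x_k^\pm$ is a polynomial in $k$'' silently assumes this identification and is therefore circular.

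The fix---and this is what the paper does---is to run the interpolation in \emph{two stages}. First interpolate on the slope \emph{count}: the expression $\sum_{\ell=1}^k\sum_j\binom{n+j-1}{n-1}(\mathbbm t_{\ell-j+1}-\mathbbm t_{\ell-j})$ is a polynomial in $k$ of degree $n$ (the $\mathbbm t_i$ are just $n+1$ unknown constants), and by hypothesis it equals $\mathbbm x_k^-=k^n\Vol(\Delta)$ for $1\le k\le n+2$, hence for all $k$. This establishes, for every $k$, that $x=\mathbbm x_k^-$ is precisely where the NP slope crosses $k(p-1)$. Only then does the NP-height at $\mathbbm x_k^-$ coincide with the polynomial slope-sum, and a second interpolation (now on degree-$(n+1)$ polynomials, using Lemma~\ref{lemma polynomial for h}) finishes the argument. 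The $+$ case is handled the same way with $\le$ in place of $<$.
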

\begin{proof}
	Since for every $1\leq k\leq n+2$ the Newton polygon $\NP(f,\chi_0)_C$ passes though the points $(\mathbbm x_k^-,h(\Delta_k^-))$  and $(\mathbbm x_k^+,h(\Delta_k^+))$, which are vertices of its lower bound $\IHP(\Delta)$, we have the following.
		\begin{itemize}
		\item[(a)] The points $(\mathbbm x_k^-,h(\Delta_k^-))$ and $(\mathbbm x_k^+,h(\Delta_k^+))$ are vertices of $\NP(f,\chi_0)_C$, and the segment connecting them is contained in $\NP(f,\chi_0)_C$.
		\item[(b)]\label{property (b)}  There are $\mathbbm x_k^-$ slopes of $\NP(f,\chi_0)_C$ strictly less than $k(p-1)$.
		\item[(c)]\label{property (c)}  There are $\mathbbm x_k^+$ slopes of $\NP(f,\chi_0)_C$ less than or equal to $k(p-1)$.
	\end{itemize}

We denote by
$\LL^\star:=\{\alpha_1,\alpha_2,\dots,\alpha_{n!\Vol(\Delta)}\}$ the Newton slopes of $\NP(f,\chi_0)_L$ in a non-descending order. By Weil conjecture, $\alpha\in [0,n(p-1)]$ for every $\alpha\in \LL^\star$.
We denote by $\mathbbm t_i$ the index such that 
$$\mathbbm t_i=\#\{\alpha\in \LL^\star\;|\;\alpha< i(p-1)\}
\quad\textrm{for}\quad0\leq i\leq n.$$
%In particular, we write $\mathbbm t_0$
%for the number of zero slopes in $\LL$.

Set $$\mathbf X_i:=\sum\limits_{j=1}^{\mathbbm{t}_i}\alpha_j\quad\textrm{and}\quad C_f^*(\chi_0,s) := \sum_{i=0}^\infty u_{\chi_0,i} s^i \in \ZZ_p\llbracket s\rrbracket.$$
Consider the relation between $C^*_f(\chi_0, s) $ and $L^*_f(\chi_0, s)$:
\begin{equation}
C^*_f(\chi_0, s)=\Big(\prod\limits_{j=0}^{\infty}L^*_f(\chi_0, q^js)^{\tbinom{n+j-1}{n-1}}\Big)^{(-1)^{n-1}}.
\end{equation}
We obtain that for every $\ell\geq 1$ there are
$$\sum_{j=\ell-n}^{\ell-1} \tbinom{n+j-1}{n-1}( \mathbbm t_{\ell-j+1}-\mathbbm t_{\ell-j})$$
slopes of $\NP(f,\chi_0)_C$  are contained in $[(\ell-1)(p-1),\ell(p-1))$,
where $\mathbbm t_{<0}=0$. 
Therefore, the slopes of $\NP(f,\chi_0)_C$ strictly less than $k(p-1)$ is equal to 
$$\sum_{\ell=1}^{k} \sum_{j=\ell-n}^{\ell-1} \tbinom{n+j-1}{n-1}( \mathbbm t_{\ell-j+1}-\mathbbm t_{\ell-j}),$$
and $\NP(f,\chi_0)_C$ passes through the points 
$$\Big(\sum_{\ell=1}^{k} \sum_{j=\ell-n}^{\ell-1} \tbinom{n+j-1}{n-1}( \mathbbm t_{\ell-j+1}-\mathbbm t_{\ell-j}),\sum_{\ell=1}^{k} \sum_{j=\ell-n}^{\ell-1} \tbinom{n+j-1}{n-1}( \mathbf X_{\ell-j+1}-\mathbf X_{\ell-j}+j(p-1)(\mathbbm t_{\ell-j+1}-\mathbbm t_{\ell-j})\Big)$$
for all $k\geq 1$.

Combining it with (b), for every $1\leq k\leq n+2$ we have 
\begin{equation}\label{equation xk=}
\sum_{\ell=1}^{k} \sum_{j=\ell-n}^{\ell-1} \tbinom{n+j-1}{n-1}( \mathbbm t_{\ell-j+1}-\mathbbm t_{\ell-j})=\mathbbm x_k^-=k^n\Vol(\Delta).
\end{equation}

Note that the left hand side of \eqref{equation xk=} is equal to \[
\sum_{i=1}^{n}\Big(( \mathbbm t_{i+1}-\mathbbm t_{i})\sum_{\ell=1}^{k}  \tbinom{n+\ell-i-1}{n-1}\Big),
\]
which is a polynomial of degree $n$.
%
%Since  for every $1\leq i\leq n$, the sum $$(\mathbbm t_{i+1}-\mathbbm t_{i})\sum_{\ell=1}^{k}  \binom{n+\ell-i-1}{n-1}$$ is a polynomial in $k$ of degree $n$, we know that $$\sum_{\ell=1}^{k} \sum_{j=\ell-n}^{\ell-1} \binom{n+j-1}{n-1}( \mathbbm t_{\ell-j+1}-\mathbbm t_{\ell-j})$$ is also a polynomial in $k$ of degree $n$.  
Since the both sides of \eqref{equation xk=} are polynomials in $k$ of degree $n$ and they have the same values for every $1\leq k\leq n+2$, they must be identical as polynomials. Namely, the equality \eqref{equation xk=} holds for all $k\geq 1$.

On the other hand, combining \eqref{equation xk=} with (a), we get
\begin{equation}\label{equation for Xk}
\sum_{\ell=1}^{k} \sum_{j=\ell-n}^{\ell-1} \tbinom{n+j-1}{n-1}( \mathbf X_{\ell-j+1}-\mathbf X_{\ell-j}+j(p-1)(\mathbbm t_{\ell-j+1}-\mathbbm t_{\ell-j}))=h(\Delta^-_k)
\end{equation}
for every $1\leq k\leq n+2$.

Clearly, the left hand side of \eqref{equation for Xk} is a polynomial in $k$ of degree $n+1$. By Lemma~\ref{lemma polynomial for h}, the right hand side of \eqref{equation for Xk} is also a polynomial in $k$ of degree $n+1$.
Running the similar argument as above, we conclude that the equality \eqref{equation for Xk} holds for every $k\geq 1$.
Therefore, the Newton polygon $\NP(f,\chi_0)_C$ passes through the point $(\mathbbm x_k^-,h(\Delta_k^-))$ for every $k\geq 1$. 

A similar argument shows that $\NP(f,\chi_0)_C$ passes through the point $(\mathbbm x_k^+,h(\Delta_k^+))$ for every $k\geq 1$. 
\end{proof}
	\begin{lemma}\label{lemmafor1.7(3)}
	For a polynomial $f\in \mathcal F(\Delta)$ with $m=[\FF_p(f):\FF_p]$, if there exists a finite character $\chi_0$ of conductor $p$ such that
	$\NP(f,\chi_0)_C$ passes though $(\mathbbm x_k^\pm,h(\Delta_k^\pm))$ for all integer $k\geq 1$, then for every nontrivial finite character $\chi$  the Newton polygon $\NP(f,\chi)_C$ passes though the points $(\mathbbm x_k^\pm,h(\Delta_k^\pm))$ for all $k\geq 0$.
\end{lemma}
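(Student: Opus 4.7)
The plan is to extract from the hypothesis a strong \emph{integral} statement about the $T$-adic expansion of $u_{\mathbbm x_k^\pm}(T)$, which is character-independent, and then re-specialize at any desired conductor. Concretely, I claim that the hypothesis at the single character $\chi_0$ of conductor $p$ forces not only $\val_T u_{\mathbbm x_k^\pm}(T)= mh(\Delta_k^\pm)$ for every $k\geq 1$, but also that the leading $T$-adic coefficient of $u_{\mathbbm x_k^\pm}(T)$ is a $p$-adic unit in $\ZZ_q$. Once this is established, the same unit immediately controls the $p$-adic valuation at every specialization, and the IHP lower bound (Corollary~\ref{corollary specialization}) closes the argument.

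For Step 1, set $\pi_0:=\chi_0(1)-1$, so that $\val_p(\pi_0)=\tfrac{1}{p-1}$. By Proposition~\ref{IHP is below NPchi} we may write
\[
u_{\mathbbm x_k^\pm}(T) \;=\; \sum_{j\geq mh(\Delta_k^\pm)} a_j\, T^j, \qquad a_j \in \ZZ_q.
\]
Every term with $j>mh(\Delta_k^\pm)$ specializes to something of $p$-adic valuation at least $\tfrac{mh(\Delta_k^\pm)+1}{p-1}$, strictly larger than $\tfrac{mh(\Delta_k^\pm)}{p-1}$. Translating the normalization used in Lemma~\ref{lemmafor1.7(1)}, the hypothesis that $\NP(f,\chi_0)_C$ passes through $(\mathbbm x_k^\pm,h(\Delta_k^\pm))$ is exactly $\val_p(u_{\mathbbm x_k^\pm}(\pi_0))=\tfrac{mh(\Delta_k^\pm)}{p-1}$. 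The only way that this precise valuation can be attained is for the $j=mh(\Delta_k^\pm)$ term to contribute non-trivially, i.e.\ $a_{mh(\Delta_k^\pm)}\not\equiv 0\pmod p$. Thus $\val_T u_{\mathbbm x_k^\pm}(T)=mh(\Delta_k^\pm)$ and its leading $T$-adic coefficient is a unit in $\ZZ_q$.

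For Steps 2--3, take any finite character $\chi$ of conductor $p^{m_\chi}$ and set $\pi:=\chi(1)-1$, so that $\val_p(\pi)=\tfrac{1}{p^{m_\chi-1}(p-1)}$. Applying the same expansion with $\pi$ in place of $\pi_0$, the leading contribution $a_{mh(\Delta_k^\pm)}\pi^{mh(\Delta_k^\pm)}$ has $p$-adic valuation exactly $\tfrac{mh(\Delta_k^\pm)}{p^{m_\chi-1}(p-1)}$, while every other term has strictly larger valuation. Therefore
\[
\val_p\bigl(u_{\mathbbm x_k^\pm}(\pi)\bigr) \;=\; \frac{mh(\Delta_k^\pm)}{p^{m_\chi-1}(p-1)}.
\]
After the normalization in Definition~\ref{definition for NP chi} (which multiplies $p$-adic valuation by $\tfrac{p^{m_\chi-1}(p-1)}{m}$ to be compatible with the $C$-to-$L$ relation), this says $\NP(f,\chi)_C$ lies at height at most $h(\Delta_k^\pm)$ over $x=\mathbbm x_k^\pm$. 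Combined with $\NP(f,\chi)_C \geq \IHP(\Delta)$ from Corollary~\ref{corollary specialization} and the fact that $(\mathbbm x_k^\pm,h(\Delta_k^\pm))$ are vertices of $\IHP(\Delta)$, equality is forced and $\NP(f,\chi)_C$ must pass through these vertices. The case $k=0$ is trivial since every Newton polygon contains the origin.

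The main obstacle is Step~1: one needs to be sure the valuation of a specialization uniquely pins down the leading coefficient to be a unit modulo $p$, rather than merely bounding $\val_T$ from above. This is the delicate point, but it follows cleanly from the gap $\tfrac{1}{p-1}$ between successive potential $p$-adic valuations coming from $T^{j}$, together with the \emph{a priori} lower bound $\val_T u_{\mathbbm x_k^\pm}\geq mh(\Delta_k^\pm)$ supplied by the IHP estimate. After this, Steps~2--3 are routine and uniform in $m_\chi$, precisely because the unit condition is a character-free statement.
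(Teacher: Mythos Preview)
Your argument is essentially the same as the paper's: extract from the conductor-$p$ hypothesis that the leading $T$-adic coefficient of $u_{\mathbbm x_k^\pm}$ is a $p$-adic unit (the paper phrases this as $u_{\mathbbm x_k^\pm}\not\equiv 0\pmod{p,T^{mh(\Delta_k^\pm)+1}}$ and argues by contradiction), then re-specialize at an arbitrary conductor and invoke Corollary~\ref{corollary specialization}.

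Two small points to tighten. First, the sentence ``the hypothesis that $\NP(f,\chi_0)_C$ passes through $(\mathbbm x_k^\pm,h(\Delta_k^\pm))$ is exactly $\val_p(u_{\mathbbm x_k^\pm}(\pi_0))=\tfrac{mh(\Delta_k^\pm)}{p-1}$'' is not true in general: a Newton polygon can pass through a point $(i,y)$ lying on a segment while the $i$-th coefficient has valuation strictly larger than $y$. What saves you (and what the paper makes explicit) is that $(\mathbbm x_k^\pm,h(\Delta_k^\pm))$ is a \emph{vertex} of $\IHP(\Delta)$; since $\NP(f,\chi_0)_C\geq\IHP(\Delta)$ and they meet at this vertex, it is forced to be a vertex of $\NP(f,\chi_0)_C$ as well, whence the valuation equality. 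Second, your treatment of $k=0$ only covers $(\mathbbm x_0^-,0)=(0,0)$; you also need $(\mathbbm x_0^+,h(\Delta_0^+))=(1,0)$, which the paper handles by computing $u_1=-\mathrm{Norm}_{\FF_p(f)/\FF_p}(E(\hat a_{\calO}\pi))\equiv -1\pmod T$.
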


\begin{proof}
	Recall that $C^*_f(T,s)=\sum\limits_{i=0}^\infty u_is^i.$
Since $$u_{\mathbbm x_0^-}=1\quad\textrm{and}\quad u_{\mathbbm x_0^+}=-\textrm{Norm}_{\FF_p(f)/\FF_p}(E(\hat a_\calO\pi))\equiv -1\pmod T,$$
	the Newton polygon $\NP(f,\chi)_C$ passes through the point $(\mathbbm x_0^\pm,h(\Delta_0^\pm))$.
	
	We  next show that for every integer $k\geq 1$ we have \begin{equation}\label{4.10}
	u_{\mathbbm x_k^\pm}\not\equiv 0\pmod {p,T^{mh(\Delta^\pm_k)+1}}.
	\end{equation}
	
	Suppose it is false. Without loss of generality, we may assume that there exists $k_0\geq 1$ such that $$u_{\mathbbm x_{k_0}^-}\equiv 0\pmod {p,T^{mh(\Delta^-_{k_0})+1}}.$$
	This congruence equality implies that the $p$-adic valuation of the coefficient of $s^{\mathbbm x_{k_0}^-}$ in $C_f^*(\chi_0,s)$ satisfies 
	$$\val_p\left(u_{\mathbbm x_{k_0}^-}\Big|_{T=\chi_0(1)-1}\right)>\frac{mh(\Delta_{k_0}^-)}{p-1},$$
and hence the Newton polygon $\NP(f,\chi_0)_C$ can never pass through the point $(\mathbbm x_{k_0}^-,h(\Delta_{k_0}^-))$. However, $(\mathbbm x_{k_0}^-,h(\Delta_{k_0}^-))$ is a vertex of the lower bound $\IHP(\Delta)$ of $\NP(f,\chi_0)_C$, a contradiction to the assumption in this lemma.
	
Therefore, we prove the inequality \eqref{4.10} and hence
$$\val_p\left(u_{\mathbbm x_k^\pm}\Big|_{T=\chi(1)-1}\right)= \frac{mh(\Delta^\pm_k)}{(p-1)p^{m_\chi-1}}.
$$
The last equality implies that $\NP(f,\chi)_C$ passes through $(\mathbbm x_k^\pm,h(\Delta_k^\pm))$ for every $k\geq 1$ when combined with Corollary~\ref{corollary specialization}.
\end{proof}
%On the other hand, it is easy to check by definition that $$\widetilde u_{\mathbbm x_0^\pm,h(\Delta_0^\pm)}=\mp 1.$$
%
%As a result, for every character $\chi:\ZZ_p\to \CC_p^\times$ of conductor $m(\chi)$, the Newton polygon $\NP(f,\chi)_L$ passes though $(\mathbbm x_k^\pm,h(\Delta_k^\pm))$ for every $0\leq  k\leq p^{m(\chi)-1}$, which completes the proof.

\begin{proof}[\textbf{Proof of Theorem~\ref{generic newton polygon}}]
	
	We know already from Corollary~\ref{corollary specialization} that $\IHP(\Delta)$ is a lower bound for $\GNP(\Delta)$.  Now we show that it is sharp at the point $(\mathbbm x_{k}^\pm,h(\Delta_{k}^\pm))$ for every $k\geq 0$.
	
	If we choose a polynomial $f\in O_\Zar$, by Lemmas~\ref{lemmafor1.7(1)} and \ref{lemmafor1.7(2)}, for every finite character $\chi_0$ of conductor $p^{m_{\chi_0}}$ and every integer $k\geq 0$, the Newton polygon $\NP(f,\chi_0)_C$ passes though  the point $(\mathbbm x_k^\pm,h(\Delta_k^\pm))$.
\end{proof}
	\begin{proof}[\textbf{Proof of Theorem~\ref{theorem for L}}]
		
		Let $\chi$ be a nontrivial finite character and $f\in O_\Zar$, where $O_\Zar$ is defined as in Lemma~\ref{lemmafor1.7(1)}. By  Lemmas~\ref{lemmafor1.7(1)}, ~\ref{lemmafor1.7(2)} and ~\ref{lemmafor1.7(3)}, the Newton polygon $\NP(f,\chi)_C$ passes through the point $(\mathbbm x_k^\pm,h(\Delta_k^\pm))$  for every $k\geq 0$. Since these points are vertices of $\IHP(\Delta)$ which is a lower bound of $\NP(f,\chi)_C$, they must also be vertices of $\NP(f,\chi)_C$.

		Therefore, by \eqref{equation LC}, for every $0\leq i_1\leq n-1$ and every $0\leq i_2\leq p^{m_\chi-1}-1$ there are $$\sum\limits_{t=0}^{i_1}(-1)^i\binom{n}{i}\mathbbm x^+_{(i_1-t)p^{m_\chi-1}+i_2}\  \left(\mathrm{resp.}\ \sum\limits_{t=0}^{i_1}(-1)^i\binom{n}{i}\mathbbm x^-_{(i_1-t)p^{m_\chi-1}+i_2}\right)$$ slopes of $\NP(f,\chi)_L$ less than or equal to (resp. strictly less than) $i_1(p-1)p^{m_\chi-1}+(p-1)i_2.$
		Since the slopes of $\NP(f,\chi)_L$ divided by $(p-1)p^{m_\chi}$ are the $p^{m(f)}$-adic Newton slopes of $L^*_f(\chi,s)$, we complete the proof.
	\end{proof}

	 \section{Nonvanishing of leading terms in universal coefficients}\label{section 5}

To show \eqref{tilde u} for $1\leq k\leq n+2$,  it is enough to show the coefficient of one special term in $\widetilde u_{\mathbbm x_k^\pm}$ is not zero. Computing explicitly the contribution of this permutation of $\Delta_k^{\pm}$ to this special term, which itself is subdivided into simpler cases, allows us to prove \eqref{tilde u}.
	 \subsection{Proof of  Proposition~\ref{proposition} with assuming Propositions~\ref{lemma section 5 (1)} and \ref{lemma section 5 (2)}.}
	 
	 	Recall that in Notation~\ref{eta}, we define the function $\eta$ which maps $Q\in \Delta^-$ to the residue of $pQ$ module $\Lambda_{\Delta}$ in $\Delta^-$.
	 
%	Recall that we define the determinant of a bi-variable function in Notation~\ref{det}
%	\begin{lemma}\label{lemma to e}
%		We have $$\widetilde{u}_{\mathbbm x_k^\pm}\equiv\det(\widetilde e_{pQ-Q'})_{Q,Q'\in \Delta_k^\pm}\pmod{T^{h(\Delta_k^\pm)+1}}.$$
%	\end{lemma}
%\begin{proof}
%	It follows from Lemma~\ref{L:estimate of Ef(x)} and \ref{first lemma}.
%\end{proof}
  	 \begin{notation}\label{Delta(I)}\hfill
 	
 	%	 \begin{definition}
 	%	 	For a power series $g(s)=\sum\limits_{i=0}^{\infty}b_{i}s^i$ in $s$, we put $\LD(g,s)s^{\deg(g,s)}$ to be the leading term of $g$, where $\ord(g,s)$ is its degree. 
 	%	 \end{definition}

 	\begin{enumerate}
 		\item For every non-empty subset $S\subseteq \{1,2,\dots,n\}$, we set $\mathbf V_S:=\sum\limits_{i\in S} \mathbf V_i$, and denote by 
$$\Ver(\Delta):=\{\mathbf V_S\;\big|\;S\subseteq \{1,2,\dots,n\}\ \textrm{and}\ S\neq \emptyset\}$$ the set consisting of all vertices of $\Delta$ except the origin $\calO$. 
 		\item For a polynomial $g(\widetilde a_{P})\in \ZZ_p[\widetilde a_{P},P\in \Delta^+\backslash \{\calO\}]$, we denote by  $g^{\res}$ the polynomial in $\ZZ_p[\zeta_1, \zeta_2, \dots, \zeta_n]$ obtained by \begin{itemize}
 			\item specializing $\widetilde a_{P}$ with $0$ if $P\notin \Ver(\Delta)$, and
 			\item replacing $\widetilde a_{P}$ by $\zeta_{\#S}$ if $P=\mathbf V_S$.
 		\end{itemize}
 		\item For a point $P_0\in \Delta^-$, we put $$\Delta_k^\pm(P_0)=\big\{Q\in\Delta_k^\pm\;\big|\;Q\equiv P_0 \pmod {\Lambda_{\Delta}}
 		\big\}.$$
 	\end{enumerate}
 \end{notation}

\begin{theorem}\label{lemma for res}
	For every $1\leq k\leq n+2$, we have 
	\begin{equation}\label{equation for res}
	\det\left(\widetilde e^\res_{pQ-Q'}\right)_{Q,Q'\in \Delta_k^\pm}\not\equiv 0\mod (p, T^{h(\Delta^\pm_k)+1}).
	\end{equation}
\end{theorem}
The rest of the paper is devoted to the proof of this theorem.

\begin{lemma}
Theorem~\ref{lemma for res} implies  Proposition~\ref{proposition}.
\end{lemma}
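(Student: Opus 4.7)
The plan is to reduce the nonvanishing of $\widetilde u_{\mathbbm x_k^\pm,h(\Delta_k^\pm)}$ modulo $p$ to the nonvanishing of a \emph{single} principal minor — namely the one indexed by $\Delta_k^\pm$ — and then to pass from the $\res$-substituted statement of Lemma~\ref{lemma for res} back to this minor via the observation that $g\mapsto g^\res$ is a $\Zp\llbracket T\rrbracket$-algebra homomorphism and hence preserves congruence to $0$ modulo $p$.

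First I would expand the determinant defining $\widetilde u_\ell(T)$ in \eqref{tildeu} as a signed sum over principal minors,
$$\widetilde u_\ell(T)=(-1)^\ell\sum_{\substack{\SS\subseteq\MM(\Delta)\\|\SS|=\ell}}\det(\widetilde e_{pQ'-Q})_{Q,Q'\in\SS},$$
and repeat in this universal setting the $T$-adic estimate used in the proof of Proposition~\ref{IHP is below NPchi}, invoking Lemma~\ref{L:estimate of Ef(x)}(2) in place of (1) in Corollary~\ref{NQQ'}. The same bookkeeping as in \eqref{explicit of u} then yields $\val_T\bigl(\det(\widetilde e_{pQ'-Q})_{Q,Q'\in\SS}\bigr)\geq h(\SS)$ for every $\SS$.

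Next I would isolate the sole contributor to the $T^{h(\Delta_k^\pm)}$-coefficient by checking that $\SS=\Delta_k^\pm$ is the \emph{unique} $\mathbbm x_k^\pm$-element subset of $\MM(\Delta)$ achieving $h(\SS)=h(\Delta_k^\pm)$. By Lemma~\ref{first lemma}, any such minimizer must also minimize $\sum_{Q\in\SS}w(Q)$; since every point of $\Delta_k^-$ has weight strictly less than $k$, every point of $\Delta_k^+\setminus\Delta_k^-$ has weight exactly $k$, and every point of $\MM(\Delta)\setminus\Delta_k^+$ has weight strictly greater than $k$, swapping any lattice point outside $\Delta_k^\pm$ for one inside strictly increases the weight sum, forcing the minimizer to be $\Delta_k^\pm$ itself. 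Every other $\SS$ then satisfies $h(\SS)\geq h(\Delta_k^\pm)+1$ and contributes nothing to the $T^{h(\Delta_k^\pm)}$-coefficient of $\widetilde u_{\mathbbm x_k^\pm}(T)$, so
$$\widetilde u_{\mathbbm x_k^\pm,h(\Delta_k^\pm)}=(-1)^{\mathbbm x_k^\pm}\cdot\bigl(\text{coefficient of }T^{h(\Delta_k^\pm)}\text{ in }\det(\widetilde e_{pQ'-Q})_{Q,Q'\in\Delta_k^\pm}\bigr).$$

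To close the argument I would apply $g\mapsto g^\res$ to both sides: if the left-hand side were $\equiv 0\pmod p$, then so would be its $\res$-image, forcing $\det(\widetilde e^\res_{pQ'-Q})_{Q,Q'\in\Delta_k^\pm}\equiv 0\pmod{(p,\,T^{h(\Delta_k^\pm)+1})}$ and contradicting Lemma~\ref{lemma for res}. All genuine difficulty is deferred to Lemma~\ref{lemma for res}; the main (minor) obstacle in the present implication is simply verifying the uniqueness of $\Delta_k^\pm$ as the $h$-minimizer among $\mathbbm x_k^\pm$-element subsets, which follows from the weight stratification of $\MM(\Delta)$ described above.
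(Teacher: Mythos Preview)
Your proposal is correct and follows essentially the same approach as the paper. The paper's proof is terser: it cites \eqref{eqQQ} (the universal analogue of Corollary~\ref{first corollary}) to identify $\widetilde u_{\mathbbm x_k^\pm,h(\Delta_k^\pm)}T^{h(\Delta_k^\pm)}$ with $\det(\widetilde e_{pQ'-Q})_{Q,Q'\in\Delta_k^\pm}$ modulo $(p,T^{h(\Delta_k^\pm)+1})$, and then observes that since $g\mapsto g^{\res}$ is a specialization, nonvanishing of the $\res$-determinant forces nonvanishing of the original. What you have done is unpack the justification of \eqref{eqQQ}---the minor expansion, the $T$-adic bound from Lemma~\ref{L:estimate of Ef(x)}(2), and the uniqueness of $\Delta_k^\pm$ as the $h$-minimizer via Lemma~\ref{first lemma} and the weight stratification---which the paper relegates to the phrase ``similar to Corollary~\ref{first corollary}''.
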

\begin{proof}
From \eqref{equation for res} we have
$$\det(\widetilde e_{pQ'-Q})_{Q,Q'\in \Delta_k^\pm}\not\equiv 0\mod (p, T^{h(\Delta^\pm_k)+1})$$ for every $1\leq k\leq n+2$. Combined with \eqref{eqQQ}, these congruence inequalities imply that
$$\widetilde u_{\mathbbm x_k^\pm,h(\Delta_k^\pm)}\not\equiv 0 \pmod{p}$$
for every $1\leq k\leq n+2$. 
\end{proof}

	\begin{lemma}\label{lemma first}
	For every $\tau\in \Iso(\Delta_k^\pm)$, if there exist $P_0\in \Delta^-$ and $Q_0\in \Delta_k^\pm(P_0)$ such that $\tau(Q_0)\notin \Delta_k^\pm(\eta(P_0))$, then 
	\begin{equation}\label{prodres}
	\prod_{Q\in \Delta_k^\pm(P_0)}\widetilde{e}^\res_{pQ-\tau(Q)}=0.
\end{equation}
\end{lemma}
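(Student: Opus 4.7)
The plan is to exploit the restrictive shape of $\widetilde e^\res_{Q'}$. After the substitution $\widetilde a_P = 0$ for $P\notin \Ver(\Delta)$, the generating series \eqref{equation: E Delta} collapses to
\[
\prod_{\emptyset \neq S\subseteq\{1,\dots,n\}} E(\widetilde a_{\#S}\,\pi\,x^{\mathbf V_S}) \;=\; \sum_{Q'} \widetilde e^\res_{Q'}(T)\, x^{Q'}.
\]
Expanding exactly as in the proof of Lemma~\ref{L:estimate of Ef(x)}, the coefficient $\widetilde e^\res_{Q'}$ is a sum indexed by tuples $(j_S)$ with $\sum_S j_S \mathbf V_S = Q'$. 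Since each vertex $\mathbf V_S = \sum_{i\in S} \mathbf V_i$ is a non-negative integer combination of $\mathbf V_1,\dots,\mathbf V_n$, this forces $Q' \in \Lambda_{\Delta}$. Hence the first key observation is
\[
\widetilde e^\res_{Q'} \neq 0 \;\Longrightarrow\; Q'\in \Lambda_{\Delta}.
\]

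Next I would translate the hypothesis into a congruence modulo $\Lambda_\Delta$. By Notation~\ref{eta}, $pP_0-\eta(P_0)\in \Lambda_{\Delta}$, so for every $Q\in \Delta_k^\pm(P_0)$ (in particular for $Q=Q_0$) the relation $Q-P_0\in \Lambda_{\Delta}$ gives $pQ \equiv pP_0 \equiv \eta(P_0) \pmod{\Lambda_{\Delta}}$. Because $\tau \in \Iso(\Delta_k^\pm)$, the image $\tau(Q_0)$ lies in $\Delta_k^\pm$, and therefore the only way the hypothesis $\tau(Q_0)\notin \Delta_k^\pm(\eta(P_0))$ can be realized is through the congruence failure $\tau(Q_0) \not\equiv \eta(P_0) \pmod{\Lambda_{\Delta}}$.

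Combining the two previous displays yields $pQ_0 - \tau(Q_0) \not\equiv 0 \pmod{\Lambda_{\Delta}}$, so $pQ_0 - \tau(Q_0) \notin \Lambda_{\Delta}$. By the first observation, $\widetilde e^\res_{pQ_0 - \tau(Q_0)} = 0$. Since $Q_0\in \Delta_k^\pm(P_0)$ is one of the factors in \eqref{prodres}, the product vanishes.

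There is essentially no technical obstacle here; the argument reduces to the obvious containment $\Ver(\Delta)\subset \Lambda_{\Delta}$ together with bookkeeping modulo $\Lambda_{\Delta}$. The real usefulness of this lemma comes in the next step, where it restricts the computation of $\det(\widetilde e^\res_{pQ-Q'})_{Q,Q'\in \Delta_k^\pm}$ in Lemma~\ref{lemma for res} to those permutations $\tau$ that respect the fiber decomposition $\Delta_k^\pm = \coprod_{P_0\in \Delta^-}\Delta_k^\pm(P_0)$, i.e.\ to permutations of the form $\tau=\coprod_{P_0}\tau_{P_0}$ with $\tau_{P_0}\in \Iso(\Delta_k^\pm(P_0),\Delta_k^\pm(\eta(P_0)))$.
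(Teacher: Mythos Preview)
Your proof is correct and follows essentially the same approach as the paper: both arguments observe that after the restriction $\res$, only the vertices $\mathbf V_S\in\Lambda_\Delta$ contribute, so $\widetilde e^\res_{Q'}=0$ unless $Q'\in\Lambda_\Delta$, and then verify that the hypothesis forces $pQ_0-\tau(Q_0)\notin\Lambda_\Delta$. Your congruence bookkeeping is slightly more explicit than the paper's, but the substance is identical.
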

\begin{proof}
		Recall that for every $Q\in \MM(\Delta)$, we have 	\begin{equation}\label{ep}
	\widetilde e_{Q}(T)=\sum_{\Big\{\vec{j}\in \ZZ_{\geq 0}^{\Delta^+\backslash \{\calO\}}\;\Big|\;\sum\limits_{P\in \Delta^+\backslash \{\calO\}}j_PP=Q\Big\}}\Big(\prod_{P\in \Delta^+\backslash \{\calO\}}c_{j_P} (\widetilde{a}_{P} \pi)^{j_P}\Big). 
	\end{equation}
	
	If $\tau(Q_0)\notin \Delta_k^\pm(\eta(P_0))$, then 
	$pQ_0-\tau(Q_0)\not\equiv \calO \mod \Lambda_\Delta$. Therefore, every linear combination $$pQ_0-\tau(Q_0)=\sum_{P\in \Delta^+\backslash \{\calO\}} j_PP$$ contains $P\notin \Ver(\Delta)$ must have $j_P\neq 0$, which implies \[\widetilde{e}^\res_{pQ_0-\tau(Q_0)}=0.\]
	Hence, to compute 	$\det(\widetilde e^\res_{pQ-Q'})_{Q,Q'\in \Delta_k^\pm}$, it suffices to take sum over all the permutations $\tau$ such that $ \tau(Q)\equiv pQ\pmod{\Lambda_{\Delta}}$
for every $Q\in \Delta_k^\pm$.
\end{proof}
%	\begin{lemma}\label{reducetores}
%		
%		The determinant
%	
%			\begin{equation}
%		\det(\widetilde e^\res_{pQ-Q'})_{Q,Q'\in \Delta_k^\pm}=\prod_{P_0\in \Delta^-}\det(\widetilde e^\res_{P_0-\eta(P_0)+pQ-Q'})_{Q,Q'\in\Delta_k^\pm(P_0)}.
%		\end{equation}
%	\end{lemma}
%\begin{proof}
%	Lemma~\ref{lemma first} shows that 
%		\begin{equation*}
%		\begin{split}
%	\det(\widetilde e^\res_{pQ-Q'})_{Q,Q'\in \Delta_k^\pm}=&\prod_{P_0\in \Delta^-}\det(\widetilde e^\res_{pQ-Q'})_{Q\in\Delta_k^\pm(P_0),Q'\in\Delta_k^\pm(\eta(P_0))}\\
%	=&\prod_{P_0\in \Delta^-}\det(\widetilde e^\res_{P_0-\eta(P_0)+pQ-Q'})_{Q,Q'\in\Delta_k^\pm(P_0)}.\qedhere
%		\end{split}
%	\end{equation*}
%\end{proof}
%	 	\begin{proof}
%	 		Let $P=\sum\limits_{i=1}^{n}r_i\mathbf{V}_i$ be a point in $\Delta$ such that $P\%\neq \calO$. By Cramer's rule, we know that $0\leq r_i<1$ and $r_i\Vol(\Delta)\in \ZZ$. Since $p$ and $\Vol(\Delta)$ are coprime, there exists an $i$ such that $pr_i\notin \ZZ$. Hence, we have $(pP)\%\neq \calO$, which shows that $\eta$ is injective. Therefore, it is also a bijection. 
%	 	\end{proof}

	 \begin{notation}
	 For a point $Q=\sum\limits_{i=1}^n z_i\mathbf V_i$,  we set $$I(Q):=\{i\;\big|\; z_i=0\}.$$
	For a subset $I\subseteq\{1,2,\dots,n\}$, we generalize Notations~\ref{notation 3.14} and~\ref{Delta(I)}(3) to
\begin{itemize}
	\item 	$\Delta_k^\pm(I):=\Big\{Q=\sum\limits_{i=1}^n z_i\mathbf V_i\in \Delta_k^\pm\Bigg|\;
	\begin{aligned}
	&	z_i=0\quad  \textrm{if}\ i\in I\\
	&	z_i>0 \quad  \textrm{otherwise}
	\end{aligned}
	\Big\},$
	\item $\Delta_k^+(I,P_0):=\Delta_k^+(I)\cap \Delta_k^+(P_0)\quad\textrm{and}\quad \Delta_k^-(I,P_0):=\Delta_k^-(I)\cap \Delta_k^-(P_0).$
\end{itemize}
\end{notation}
\begin{lemma}\hfill
	\begin{enumerate}
		\item For each $P_0\in \Delta^-$, if $I\not\subseteq I(P_0)$, then $\Delta_k^\pm(I,P_0)=\emptyset$.
	\item The set $\Delta_k^\pm$ is a disjoint union of $\Delta_k^\pm(P_0,I)$ for all points $P_0\in \Delta^-$ and all subsets $I\subseteq I(P_0)$.
	\end{enumerate}
\end{lemma}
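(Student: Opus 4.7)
The lemma is essentially a bookkeeping statement; no technical obstacle arises once the definitions are unfolded carefully, so the plan is short.

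\textbf{Plan for (1).} Write $P_0 = \sum_{i=1}^n r_i \mathbf V_i$ with $r_i \in [0,1)$, which is legitimate since $P_0 \in \Delta^- = \Delta^\circ \cap \ZZ^n$. Suppose for contradiction that $I \not\subseteq I(P_0)$, so there exists $i_0 \in I$ with $r_{i_0} > 0$. The plan is to show that any hypothetical $Q = \sum z_i \mathbf V_i \in \Delta_k^\pm(I,P_0)$ leads to a contradiction at coordinate $i_0$. By definition $Q \equiv P_0 \pmod{\Lambda_\Delta}$, so $Q - P_0 \in \Lambda_\Delta = \bigoplus_{i=1}^n \ZZ_{\geq 0}\mathbf V_i$; writing $Q - P_0 = \sum m_i \mathbf V_i$ with $m_i \in \ZZ_{\geq 0}$, we get $z_i = r_i + m_i$. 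Then $z_{i_0} \geq r_{i_0} > 0$, contradicting $i_0 \in I$ (which forces $z_{i_0} = 0$). Hence $\Delta_k^\pm(I,P_0) = \emptyset$.

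\textbf{Plan for (2).} I first show every point of $\Delta_k^\pm$ lies in at least one $\Delta_k^\pm(P_0,I)$ with $I \subseteq I(P_0)$, then prove the decomposition is disjoint. For existence, given $Q \in \Delta_k^\pm$ take $P_0 := Q \%$, the unique representative of $Q$ modulo $\Lambda_\Delta$ in $\Delta^-$ (as defined in Notation~\ref{eta}), and $I := I(Q)$. Then tautologically $Q \in \Delta_k^\pm(I) \cap \Delta_k^\pm(P_0) = \Delta_k^\pm(I,P_0)$, and part (1) applied to this nonempty set gives $I \subseteq I(P_0)$. For disjointness, if $Q \in \Delta_k^\pm(I,P_0)$, then $P_0$ must equal $Q \%$ by uniqueness of the residue, and $I$ must equal $\{i : z_i = 0\} = I(Q)$ by the defining condition of $\Delta_k^\pm(I)$; so the pair $(P_0, I)$ is uniquely determined by $Q$.

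\textbf{Where the subtlety (if any) lies.} The only place one must be careful is distinguishing the two meanings of $\Delta^-$ vs $\Delta_k^-$: the representative $P_0 = Q\%$ lives in $\Delta^-$ (coordinates strictly less than $1$) regardless of whether $Q$ sits in $\Delta_k^+$ or $\Delta_k^-$, so the decomposition $z_i = r_i + m_i$ with $m_i \in \ZZ_{\geq 0}$ is uniform in the $\pm$ case. The upper bounds on $z_i$ (strict $< k$ in the $-$ case, $\leq k$ in the $+$ case) play no role in this lemma; they only affect which $(P_0, I)$ pairs yield nonempty fibers, not the partition structure itself. No cases need to be grouped by $p$ or by $D$, and no appeal to Hypothesis~\ref{hypothesis} is needed.
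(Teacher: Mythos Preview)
Your proposal is correct and is precisely the kind of direct definition-unfolding the paper has in mind: the paper's own proof reads in its entirety ``Both (1) and (2) are straightforward,'' and your argument supplies exactly those omitted details without introducing any additional ideas.
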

\begin{proof}
Both (1) and (2) are straightforward.
\end{proof}
\begin{lemma}\label{lemma second}
		For every $\tau\in \Iso(\Delta_k^\pm)$, if there exists a subset $I\subseteq \{1,2,\dots,n\}$ such that $\tau(\Delta_k^\pm(I))\neq \Delta_k^\pm(I) $, then \begin{equation}\label{prodsurface}
		\prod_{Q\in \Delta_k^\pm(P_0)}\widetilde{e}_{pQ-\tau(Q)}=0.
		\end{equation}
\end{lemma}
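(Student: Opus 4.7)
The plan is to leverage Lemma~\ref{lemma first}, which already reduces us to permutations $\tau$ with $\tau(\Delta_k^\pm(P_0)) = \Delta_k^\pm(P_0)$ for every $P_0 \in \Delta^-$ (otherwise the analogous product over $\widetilde e^\res$ vanishes, and the same argument applied to $\widetilde e$ itself gives the corresponding vanishing here since $\widetilde e$ specializes to $\widetilde e^\res$). The crucial additional input is that $\widetilde e_{Q''} = 0$ whenever $Q'' \notin \MM(\Delta)$, because the sum in \eqref{equation: E Delta} for $\widetilde{e}_{Q''}$ is over non-negative integer combinations of points in $\Delta^+\setminus\{\calO\}$ summing to $Q''$, which requires $Q''$ to lie in $\Cone(\Delta)\cap\ZZ^n=\MM(\Delta)$. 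So for the product in question to be non-zero, each $pQ-\tau(Q)$ must have non-negative $\mathbf V_i$-coordinates.

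Writing $Q = \sum_i z_i \mathbf V_i$ and $\tau(Q) = \sum_i z'_i \mathbf V_i$, the condition $pz_i - z'_i \geq 0$ combined with $z'_i \geq 0$ forces $z'_i = 0$ whenever $z_i = 0$. Hence $I(Q) \subseteq I(\tau(Q))$ for every $Q$ making the product nonzero. The heart of the proof is to upgrade this inclusion to an equality $I(Q) = I(\tau(Q))$, i.e., to show $\tau$ preserves each stratum $\Delta_k^\pm(I)$; the lemma then follows by contrapositive.

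I would argue stratum-by-stratum inside a fixed $\Delta_k^\pm(P_0)$. The inclusions $I(Q)\subseteq I(\tau(Q))$ with $I(Q),I(\tau(Q))\subseteq I(P_0)$ translate into
\[
\tau\big(\Delta_k^\pm(I, P_0)\big) \;\subseteq\; \bigsqcup_{\substack{I' \supseteq I \\ I' \subseteq I(P_0)}} \Delta_k^\pm(I', P_0).
\]
The plan is then downward induction on $|I|$. The base case $I = I(P_0)$ is immediate since $\Delta_k^\pm(I(P_0), P_0) = \{P_0\}$ is a singleton and is automatically fixed. For the inductive step, having shown $\tau(\Delta_k^\pm(I', P_0)) = \Delta_k^\pm(I', P_0)$ for all $I' \supsetneq I$ with $I'\subseteq I(P_0)$, bijectivity of $\tau$ on the finite set $\Delta_k^\pm(P_0)$ removes all strata with $I' \supsetneq I$ from the allowed image of $\Delta_k^\pm(I, P_0)$, leaving only $\Delta_k^\pm(I, P_0)$ itself; a second application of bijectivity/finiteness gives equality.

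Taking the contrapositive delivers the lemma: if $\tau$ fails to preserve some $\Delta_k^\pm(I)$, then $\tau$ fails to preserve some $\Delta_k^\pm(I, P_0)$ for an appropriate $P_0 \in \Delta^-$, and then the induction breaks, meaning there must exist $Q \in \Delta_k^\pm(P_0)$ with $pQ - \tau(Q) \notin \MM(\Delta)$; the corresponding factor $\widetilde e_{pQ-\tau(Q)}$ vanishes, killing the product \eqref{prodsurface}. I don't anticipate a serious obstacle: the only subtlety is carefully ordering the strata by reverse inclusion so that bijectivity can be applied stratum-by-stratum, but once this partial-order structure is in place the argument is forced.
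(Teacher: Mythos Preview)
Your core idea---the inclusion $I(Q)\subseteq I(\tau(Q))$ whenever $pQ-\tau(Q)\in\MM(\Delta)$, followed by an induction on $|I|$ using bijectivity---is exactly what the paper uses. The paper phrases it as a minimal-counterexample argument (take $I_0$ of smallest cardinality not preserved by $\tau$, then produce a point $Q_0$ with $pQ_0-\tau(Q_0)\notin\Cone(\Delta)$), while you phrase it as explicit downward induction; these are logically equivalent.

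That said, your wrapping introduces two errors you should remove. First, your reduction via Lemma~\ref{lemma first} is backwards: that lemma gives vanishing of $\widetilde e^{\res}$, and the fact that $\widetilde e$ \emph{specializes} to $\widetilde e^{\res}$ means vanishing of $\widetilde e$ implies vanishing of $\widetilde e^{\res}$, not the other way around. Moreover Lemma~\ref{lemma first} yields $\tau(\Delta_k^\pm(P_0))\subseteq\Delta_k^\pm(\eta(P_0))$, not $\Delta_k^\pm(P_0)$. Second, your base-case claim that $\Delta_k^\pm(I(P_0),P_0)=\{P_0\}$ is false: any $P_0+\sum_{i\notin I(P_0)} m_i\mathbf V_i$ with $m_i\geq 0$ lying in $\Delta_k^\pm$ also belongs to this set.

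Both errors disappear if you simply drop the reduction to a fixed $P_0$ and run your induction globally on $\Delta_k^\pm$. The base case is then $I=\{1,\dots,n\}$, where $\Delta_k^\pm(I)=\{\calO\}$ really is a singleton, and your inductive step (remove the already-preserved strata with $I'\supsetneq I$, then use finiteness) goes through verbatim. No appeal to Lemma~\ref{lemma first} is needed at all, and this matches the paper's self-contained proof.
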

\begin{proof}
		
	We put $I_0\subseteq \{1,2,\dots,n\}$ to be one of the smallest subset such that $$\tau( \Delta_k^\pm(I_0))\neq \Delta_k^\pm(I_0).$$ Namely,  every set $I$ that contains fewer elements than $I_0$ satisfies
	$$\tau( \Delta_k^\pm(I))= \Delta_k^\pm(I).$$
	
	It implies a point $Q_0\in \Delta_k^\pm(I_0)$ 
	such that $pQ_0-\tau(Q_0)\notin \Cone(\Delta)$. Hence, we obtain $\widetilde e_{pQ_0-\tau(Q_0)}=0$
	and consequently \eqref{prodsurface}.
\end{proof}

	 	\begin{proposition}\label{proposition first}
	 	We have the following equality
 		\begin{equation}\label{equation first}
 		\det\left(\widetilde e^\res_{pQ-Q'}\right)_{Q,Q'\in \Delta_k^\pm}=\prod_{P_0\in \Delta^-}\prod_{I \subseteq I(P_0)}\det\left(\widetilde e^\res_{P_0-\eta(P_0)+pQ-Q'}\right)_{Q,Q'\in\Delta_k^\pm(I,P_0)}.
 		\end{equation}
	 	\end{proposition}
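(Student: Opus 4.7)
The plan is to expand both sides via the Leibniz formula and set up a natural bijection between the permutations that contribute nonzero terms.

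By Lemma~\ref{lemma first} and Lemma~\ref{lemma second} (the latter applying to $\widetilde e^\res$ because it is a specialization of $\widetilde e$), only those $\tau \in \Iso(\Delta_k^\pm)$ which restrict to bijections $\tau \colon \Delta_k^\pm(I, P_0) \xrightarrow{\sim} \Delta_k^\pm(I, \eta(P_0))$ for every $P_0 \in \Delta^-$ and every $I \subseteq I(P_0)$ can contribute nonzero terms to the LHS. Call such $\tau$ \emph{$\eta$-compatible}.

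Next, I would observe that Hypothesis~\ref{hypothesis} forces $\gcd(p, D) = 1$, so if $P_0 = \sum r_i \mathbf V_i$ with $r_i \in [0,1) \cap \tfrac{1}{D}\ZZ$, then $\eta(P_0) = \sum (pr_i - \lfloor pr_i \rfloor)\mathbf V_i$ and the $i$-th coordinate vanishes iff $r_i = 0$; hence $I(P_0) = I(\eta(P_0))$. Consequently the translation $\phi_{P_0}\colon \Delta_k^\pm(I, P_0) \to \Delta_k^\pm(I, \eta(P_0))$ sending $P_0 + Q_1 \mapsto \eta(P_0) + Q_1$ is a well-defined bijection (the lattice-point constraints defining the two sets coincide). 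Therefore every $\eta$-compatible $\tau$ corresponds to a unique tuple $(\tau'_{I, P_0})$ of self-permutations $\tau'_{I, P_0} \in \Iso(\Delta_k^\pm(I, P_0))$ via $\tau'_{I, P_0}(Q) := \tau(Q) - \eta(P_0) + P_0$, and conversely every such tuple arises this way. A direct computation gives
\[
pQ - \tau(Q) = (P_0 - \eta(P_0)) + pQ - \tau'_{I, P_0}(Q),
\]
so the entry $\widetilde e^\res_{pQ - \tau(Q)}$ of the LHS matches the entry $\widetilde e^\res_{P_0 - \eta(P_0) + pQ - \tau'_{I, P_0}(Q)}$ appearing on the RHS.

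The main technical point, and the part requiring the most care, is sign bookkeeping. If $P_0$ is fixed by $\eta$ then $\phi_{P_0}$ is the identity and $\sgn(\tau|_{\Delta_k^\pm(I, P_0)}) = \sgn(\tau'_{I, P_0})$ immediately. For an $\eta$-orbit of length $L > 1$, the restriction of $\tau$ to $\bigsqcup_j \Delta_k^\pm(I, \eta^j(P_0))$ factors as the disjoint union $\bigsqcup_j \tau'_{I, \eta^j(P_0)}$ composed with the fixed cyclic-shift permutation induced by the translations $\phi_{\eta^j(P_0)}$. The resulting overall sign depends only on $\eta$ and the orderings chosen on $\Delta_k^\pm$ and on the blocks $\Delta_k^\pm(I, P_0)$, not on $\tau$, so it is uniform across the Leibniz expansion and is absorbed by the $\pm$ ambiguity of Notation~\ref{det}. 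Collecting everything, both sides of \eqref{equation first} equal the same sum up to this harmless global sign, which completes the argument.
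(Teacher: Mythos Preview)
Your proposal is correct and follows essentially the same approach as the paper: invoke Lemmas~\ref{lemma first} and~\ref{lemma second} to restrict to $\eta$-compatible permutations, use the translation identification $\Delta_k^\pm(I,\eta(P_0))=\Delta_k^\pm(I,P_0)-P_0+\eta(P_0)$, and match entries. You are in fact more careful than the paper, which asserts the translation identity without justifying $I(P_0)=I(\eta(P_0))$ and handles the sign issue silently via the $\pm$ convention of Notation~\ref{det}.
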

	 	\begin{proof}
	 	By Lemmas~\ref{lemma first} and~\ref{lemma second}, for every $\tau\in \Iso(\Delta_k^\pm)$ if \[\prod_{Q\in \Delta^\pm(P_0)}\widetilde e^\res_{pQ-\tau(Q)}\neq 0,\] then $\tau$ must map $\Delta_k^\pm(I,P_0)$ onto $\Delta_k^\pm(I,\eta(P_0))$ for every  point $P_0\in \Delta^-$ and every integer $I\in I(P_0)$. 
	 	Note that
	 	\begin{equation}\label{p0}
	 	\Delta_k^\pm(I,\eta(P_0))=\Delta_k^\pm(I,P_0)-P_0+\eta(P_0).
	 	\end{equation}
	 	Then we have 
	 	\begin{multline*}
	 	\det\left(\widetilde e^\res_{pQ-Q'}\right)_{Q,Q'\in \Delta_k^\pm}=\prod_{P_0\in \Delta^-}\prod_{I \subseteq I(P_0)}\det\left(\widetilde e^\res_{pQ-Q'}\right)_{Q'\in\Delta_k^\pm(I,P_0), Q'\in \Delta_k^\pm(I,\eta(P_0))}\\
	 	=\prod_{P_0\in \Delta^-}\prod_{I \subseteq I(P_0)}\det\left(\widetilde e^\res_{P_0-\eta(P_0)+pQ-Q'}\right)_{Q,Q'\in\Delta_k^\pm(I,P_0)}.\qedhere
	 		\end{multline*}
	\end{proof}

%\begin{notation}
%	for every $P=\sum\limits_{i=1}^nz_i\mathbf{V}_i\in \Lambda_\Delta$, we put
%	$$\widetilde{e}_P^\res(T)=\sum_{j=w(P)}^{\sum_{i=1}^{n}z_i}\widetilde{e}_{P,i}^\res T^i.$$
%\end{notation}
%\begin{lemma}
%	$$\widetilde{v}^\res_{k,h(\Delta^\Int_k)}/(\pm)=\prod_{P_0\in \Delta^{\Int}}\sum_{\tau\in \Iso(S_{\Delta,k-1})}\prod_{Q\in S_{\Delta,k-1}} \widetilde{e}_{pP_0-\eta(P_0)+pQ-\tau (Q),w(pP_0-\eta(P_0)+pQ-\tau (Q))}^{\res}$$
%\end{lemma}

 \begin{definition}\label{definition of partial order}
 		\hfill
	\begin{enumerate}
		\item 	
		The \emph{partial degree} of  a monomial $\prod\limits_{i=1}^n \zeta_i^{t_i}$, denoted by $\Deg(\prod\limits_{i=1}^n \zeta_i^{t_i} )$, is defined to be the vector $(t_n,t_{n-1},\dots,t_1)\in \ZZ_{\geq 0}^n,$
		where  $\ZZ_{\geq 0}^n$ equips with a reverse lexicographic order. Namely, for two vectors $\vec{v}=(v_1,v_2,\dots,v_n)$ and $\vec{u}=(u_1,u_2,\dots,u_n)$ in $\RR^n$, we call $\vec{v}$ \emph{strictly greater than} $\vec{u}$, denoted by $\vec{v}\succ\vec{u}$, if there is $1\leq k\leq n$ such that \begin{itemize}
			\item $v_i=u_i$ for every $1\leq i\leq k-1,$ and 
			\item $v_k>u_k.$
		\end{itemize}
	Note that the partial degree of every nonzero constant is just $(0,0,\dots,0)$.

%		\item 	for every two monomials  $\prod\limits_{P\in \Ver(\Delta)} \zeta_P^{n_P}$ and $\prod\limits_{P\in \Ver(\Delta)} \zeta_P^{m_P}$, we write $$\prod\limits_{P\in \Ver(\Delta)} \zeta_P^{n_P}>\prod\limits_{P\in \Ver(\Delta)} \zeta_P^{m_P}$$ if $$\Deg(\prod\limits_{P\in \Ver(\Delta)} \zeta_P^{n_P})>\Deg(\prod\limits_{P\in \{V_I}(\Delta)} \zeta_P^{m_P}).$$
		\item The \emph{partial degree} of a polynomial $g\in \Zp[\![T]\!][\zeta_1,\zeta_2,\dots,\zeta_n]$, denoted by $\Deg(g)$, is the maximal partial degree of nonzero monomials in $g$. 
		\item For a point $Q=\sum\limits_{j=1}^\ell z_j\mathbf V_{S_j}$ such that $0=z_0<z_1<\cdots<z_\ell$, the \emph{degree} of $Q$, denoted by $\Deg(Q)$, is the $n$-dimensional vector $(v_1,v_2,\dots,v_n)$ such that $$v_{n+1-\sum\limits_{j=i}^\ell \#S_j}=z_i-z_{i-1}\quad \textrm{for every }\quad1\leq i\leq \ell,$$ and all other components are zero. 
		\item The leading term of $g$, denoted by $\LD(g)$, is the sum of monomials in $g$ of the maximal partial degree. 
		 
	\end{enumerate}
\end{definition}

\begin{property}\label{property of valuation}
	\hfill
	\begin{enumerate}
		\item Every nonzero $g\in \Zp[\![T]\!][\zeta_1,\zeta_2,\dots,\zeta_n]$ satisfies
		$$\Deg(g-\LD(g))\prec \Deg(g).$$
		\item For every two polynomials $g_1,g_2\in \Zp[\![T]\!][\zeta_1,\zeta_2,\dots,\zeta_n]$, we have 
		\begin{enumerate}[label=(\Roman*)]
			\item $\Deg(g_1-g_2)\preceq \max(\Deg(g_1),\Deg(g_2)).$
			\item $\Deg(g_1g_2)=\Deg(g_1)+\Deg(g_2).$
			\item $\LD(g_1g_2)=\LD(g_1)\LD(g_2).$
			\item $\LD(g_1)=\LD(g_2)$ if and only if $\Deg(g_1)\succ\Deg(g_1-g_2). $
		\end{enumerate}
	\end{enumerate}
\end{property}

 \begin{lemma}\label{lemma valuation of P}
	Let $ \coprod\limits_{i=1}^{\ell}S_i\subseteq\{1,2,\dots,n'\}$ for some $n'\leq n$ and $Q_1=\sum\limits_{i=1}^\ell m_i\mathbf V_{S_i}\in \Lambda_{\Delta}$ such that $0=m_0< m_1<\cdots<m_{\ell}$. Then 
	\begin{itemize}
		\item[(1)] the leading term
		\begin{equation}
		\label{LDeres}\LD\Big(\widetilde{e}^\res_{{Q_1}}\Big)=\prod_{i=1}^\ell c_{m_i-m_{i-1}} \Big(\zeta_{\sum\limits_{j=i}^n\#S_j} \pi\Big)^{m_i-m_{i-1}},
		\end{equation}
		where $\{c_i\}$ is defined in \eqref{Artin-Hasse}.
		\item[(2)] $\Deg(\widetilde{e}^\res_{{Q_1}})=\Deg({Q_1}).$
	\end{itemize}
\end{lemma}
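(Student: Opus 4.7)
The plan is to trace through the defining expansion for $\widetilde e^{\res}_{Q_1}$ and identify, among all ways of writing $Q_1$ as a nonnegative integer combination of the vectors $\mathbf V_S$ with $\emptyset \neq S \subseteq \{1,\dots,n\}$, the unique one contributing to the leading term. Starting from \eqref{ep} and applying $\res$, one has
\[
\widetilde e^\res_{Q_1} \;=\; \sum_{\vec j}\Big(\prod_{\emptyset\neq S\subseteq\{1,\dots,n\}} c_{j_S}\,(\widetilde a_{\#S}\pi)^{j_S}\Big),
\]
the sum running over tuples $\vec j = (j_S)$ of nonnegative integers satisfying $\sum_S j_S\mathbf V_S = Q_1$. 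Grouping the $\widetilde a_{\#S}$'s by size, the resulting monomial equals $\prod_{r=1}^n \widetilde a_r^{t_r(\vec j)}$ with $t_r(\vec j) := \sum_{|S|=r} j_S$, so its partial degree is the vector $(t_n(\vec j), t_{n-1}(\vec j), \dots, t_1(\vec j))$. The task then reduces to identifying the $\vec j$ that maximizes this vector in reverse-lex order.

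Next I would constrain $\vec j$ by reading off the $\mathbf V_k$-coefficients of the equality $\sum_S j_S \mathbf V_S = Q_1$: for each $k$ one obtains $\sum_{S \ni k} j_S = m_j$ when $k \in S_j$, and $\sum_{S\ni k} j_S = 0$ otherwise. Setting $T_i := \coprod_{j=i}^{\ell} S_j$, the second family of constraints forces $j_S = 0$ whenever $S$ contains some $k\notin T_1$. Hence every nonzero $j_S$ satisfies $S \subseteq T_1$; in particular $t_r(\vec j) = 0$ for $r > |T_1|$, so the first possibly nonzero entry of the partial degree sits at position $t_{|T_1|}$.

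The main step is an induction on $\ell$. The only $S\subseteq T_1$ with $|S|=|T_1|$ is $T_1$ itself, hence $t_{|T_1|}(\vec j) = j_{T_1}$. Taking any $k \in S_1$, the constraint $m_1 = \sum_{S\ni k} j_S \geq j_{T_1}$ gives $j_{T_1} \leq m_1$; equality forces $j_S = 0$ for every $S\subsetneq T_1$ with $S\cap S_1 \neq \emptyset$, so every remaining nonzero $j_S$ has $S\subseteq T_2$. Subtracting off the contribution $m_1 \mathbf V_{T_1}$ reduces the question to decomposing $Q_1 - m_1\mathbf V_{T_1} = \sum_{i=2}^{\ell}(m_i - m_1)\mathbf V_{S_i}$ using only $\mathbf V_S$'s with $S\subseteq T_2$, which is an instance of the same problem with $\ell$ replaced by $\ell-1$ (the strict inequalities $0<m_2-m_1<\cdots<m_\ell-m_1$ being preserved). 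By induction, the unique maximizer is $j_{T_i} = m_i - m_{i-1}$ for all $i = 1,\dots,\ell$, and $j_S = 0$ otherwise.

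Substituting this unique $\vec j$ back produces the leading monomial $\prod_{i=1}^{\ell} c_{m_i-m_{i-1}}(\widetilde a_{|T_i|}\pi)^{m_i-m_{i-1}}$, giving (1) since $|T_i| = \sum_{j=i}^{\ell}\#S_j$. For (2), the $k$-th coordinate of this monomial's partial degree is $m_i - m_{i-1}$ exactly when $k = n+1-|T_i|$ and zero otherwise, which matches the formula for $\Deg(Q_1)$ in Definition~\ref{definition of partial order}(3). The main bookkeeping obstacle is the verification that the maximizing choice $j_{T_1} = m_1$ genuinely forces $j_S = 0$ for every $S\subsetneq T_1$ meeting $S_1$; this is what makes the residual problem live entirely on $T_2$ and keeps the induction clean.
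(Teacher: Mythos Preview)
Your proof is correct and follows the same approach as the paper: you expand $\widetilde e^{\res}_{Q_1}$ via \eqref{ep}, restrict to the vertex variables, and identify the unique decomposition $\vec j$ giving the maximal partial degree. The paper's own proof simply asserts that the displayed monomial is the unique term of maximal degree without justification; your inductive argument (peeling off $j_{T_1}=m_1$ and reducing to $Q_1-m_1\mathbf V_{T_1}$ supported on $T_2$) supplies exactly the verification the paper omits.
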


\begin{proof}	
	(1) By \eqref{ep} and Notation~\ref{Delta(I)} (3),  the polynomial  $\widetilde{e}_{Q_1}$ can be written explicitly as 
	$$	\widetilde e^\res_{{Q_1}}=\sum_{\Big\{\vec{j}\;\Big|\;\sum\limits_{\substack{S\subseteq \{1,2,\dots,n\}\\S\neq \emptyset}}j_{S}{\mathbf V_S}={Q_1}\Big\}}\Big(\prod_{\substack{S\subseteq \{1,2,\dots,n\}\\S\neq \emptyset}}c_{j_{S}} (\zeta_{\#{S}} \pi)^{j_{S}}\Big). 
	$$
	
	The monomial $$\prod_{i=1}^n c_{m_i-m_{i-1}} \Big(\zeta_{\sum\limits_{j=i}^n \#S_j} \pi\Big)^{m_i-m_{i-1}}$$
	is the unique term in $\widetilde{e}^\res_{{Q_1}}$ of the maximal degree, which completes the proof.
	
	(2) It follows directly from (1).
	%	by taking \[j_S=\begin{cases}
	%	z_{\alpha_k}-z_{\alpha_{k-1}}& \textrm{if}\ S=\{\alpha_i\;|\;1\leq i\leq k\},\\
	%	0&\textrm{otherwise.}
	%	\end{cases}\]
	%	
	%The maximal degree of $\widetilde e^\res_{Q}$ is $(z_{\alpha_1},z_{\alpha_2}-z_{\alpha_1},\dots,z_{\alpha_n}-z_{\alpha_{n-1}})$, and the term comes from taking \[j_S=\begin{cases}
	%z_{\alpha_k}-z_{\alpha_{k-1}}& \textrm{if}\ S=\{\alpha_i\;|\;1\leq i\leq k\},\\
	%0&\textrm{otherwise.}
	%\end{cases}\]
	%Namely,  
\end{proof}

\begin{notation}\label{n'}\hfill
\begin{enumerate}

\item Let $P_0\in \Delta^-$ and $I\subseteq I(P_0)$. By relabeling indices, we may assume that $I=\{n'+1,n'+2,\dots,n\}$.
Let $S_1, S_2,\dots,S_\ell$ be an ordered disjoint subsets of $\{1,2,\dots,n'\}$ such that $\coprod\limits_{i=1}^\ell S_i\subseteq\{1,2,\dots,n'\}.$  

We set  $$\Delta_k^\pm(I,P_0;S_1,\dots,S_\ell):=\Big\{\sum_{j=1}^{\ell} z_j\mathbf V_{S_j}\in \Delta^\pm_k(I,P_0) \;\Big|\;0<z_1<\cdots<z_\ell\Big\}.$$ 
\item For $1\leq \ell\leq  n'$, we set $$\mathscr S_\ell(\Delta^\pm_k(I,P_0)):=\{\Delta_k^\pm(I,P_0;S_1,\dots,S_\ell)\},$$
where $S_1,\dots,S_\ell$ runs over all ordered disjoint subsets of $\{1,2,\dots,n'\}$.
\item Let  $$\mathscr S(\Delta^\pm_k(I,P_0))=\coprod_{\ell=0}^{n'}\mathscr S_\ell(\Delta^\pm_k(I,P_0)).$$
\end{enumerate}

Clearly, we have $$\Delta^\pm_k(I,P_0)=\coprod_{\SS\in \mathscr S(\Delta^\pm_k(I,P_0))}\SS.$$
\end{notation}

We next  to show that Theorem~\ref{lemma for res} follows from Propositions~\ref{lemma section 5 (1)} and~\ref{lemma section 5 (2)} whose proofs are given later in \S\ref{section 5.1} and \S\ref{section 5.2} respectively.

\begin{proposition}\label{lemma section 5 (1)}
	For every subset $I\subseteq I(P_0)$, we have 
	\begin{multline}\label{equation section 5 (1)}
	\LD\Big( \det(\widetilde e^\res_{P_0-\eta(P_0)+pQ-Q'})_{Q,Q'\in\Delta_k^\pm(I,P_0)}\Big)\\
	=\prod_{\SS\in\mathscr S(\Delta_k^\pm(I,P_0))} 
	\det\Big(\LD(\widetilde{e}_{P_0-\eta(P_0)+pQ-Q'}^{\res})\Big)_{Q,Q'\in \SS}.
	\end{multline}
\end{proposition}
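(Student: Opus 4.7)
The determinant on the left of \eqref{equation section 5 (1)} expands as
$$\pm\!\!\sum_{\tau \in \Iso(\Delta_k^\pm(I,P_0))} \sgn(\tau) \prod_{Q \in \Delta_k^\pm(I,P_0)} \widetilde e^{\res}_{P_0 - \eta(P_0) + pQ - \tau(Q)}.$$
By Lemma~\ref{lemma valuation of P}(2) together with Property~\ref{property of valuation}(2)(II), the partial degree of each summand equals $\sum_{Q} \Deg\!\big(P_0 - \eta(P_0) + pQ - \tau(Q)\big)$. The overall strategy is to identify exactly which $\tau$ maximize this total partial degree in the reverse lexicographic order, and to show they are precisely the \emph{block-preserving} permutations, i.e.\ those $\tau$ which send each $\SS \in \mathscr S(\Delta_k^\pm(I,P_0))$ to itself under the natural identification of $\Delta^\pm_k(I,P_0)$ with $\Delta^\pm_k(I,\eta(P_0))$ provided by \eqref{p0}. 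Once that is done, the sum over $\tau$ factors over the blocks and \eqref{equation section 5 (1)} follows from Property~\ref{property of valuation}(2)(III)\,--\,(IV).

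\textbf{Key computational step.} For each $Q \in \SS = \Delta_k^\pm(I,P_0;S_1,\dots,S_\ell)$, write $Q = \sum_{j=1}^{\ell} z_j \mathbf V_{S_j}$ and similarly for $\tau(Q)$, then compute the chain decomposition
$$pQ + P_0 - \eta(P_0) - \tau(Q) \;=\; \sum_{i=1}^{\ell^{*}} m_i^{*}\, \mathbf V_{T_i^{*}},$$
by reading off $\mathbf V_i$-coefficients via \eqref{equation w1} and regrouping equal values. When $\tau(Q)$ lies in the \emph{same} block as $Q$, the shape $(T_i^{*})$ refines to the finest chain compatible with the pair $(S_1,\dots,S_\ell)$, which, via Lemma~\ref{lemma valuation of P}(2), produces the maximal possible degree vector in the reverse lex order. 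When $\tau(Q)$ lies in a different block $\SS'\neq\SS$, the chain structure necessarily coarsens (either some $S_j$ splits across several $T_i^{*}$, or two consecutive $z_j$'s merge after the subtraction), and the corresponding degree vector strictly decreases in the reverse lex order. Summing over $Q\in \Delta_k^\pm(I,P_0)$ and invoking a pairwise exchange argument shows that any cross-block $\tau$ is strictly dominated by a suitable block-preserving permutation.

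\textbf{Conclusion and main obstacle.} Once block-preservation of the maximizing $\tau$ is established, the product splits as $\prod_{\SS}\prod_{Q\in\SS} \widetilde e^{\res}_{P_0-\eta(P_0)+pQ-\tau_\SS(Q)}$; combining with Property~\ref{property of valuation}(1) and (2)(IV), the leading term becomes $\prod_{\SS}\det\bigl(\LD(\widetilde e^{\res}_{P_0-\eta(P_0)+pQ-Q'})\bigr)_{Q,Q'\in\SS}$, which is \eqref{equation section 5 (1)}. The technical heart lies in the strict monotonicity claim of the previous paragraph: verifying that cross-block movement strictly decreases the reverse lex degree requires a careful case analysis of how splitting or coarsening the chain $(S_j)$ affects the indices $n+1-\sum_{j=i}^{\ell}\#S_j$ that parameterize $\Deg$. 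A subtle point is that reverse lex prioritizes small-indexed components, corresponding to \emph{large} prefixes $\#S_1+\cdots+\#S_\ell$, so one must show that any cross-block exchange either shrinks the top of this chain or reduces its leading gap $m_1-m_0$. Handling ties (where two different images yield the same degree vector) and confirming that the determinantal signs still align correctly in the block-factorized form will form the bulk of the verification.
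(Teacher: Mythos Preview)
Your overall strategy matches the paper's: identify the permutations $\tau$ that maximize the total partial degree, show they are exactly the block-preserving ones, then factor. However, there are two substantive gaps.

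First, your proposed route to strict monotonicity via a ``pairwise exchange argument'' and a case analysis of how the chain coarsens is more involved than needed, and you have not indicated how it would actually go through. The paper instead proves a clean \emph{pointwise} inequality (Lemma~\ref{lemma2}, built on Lemma~\ref{beautiful lemma}): for each individual pair $Q,Q'$,
\[
\Deg\bigl(pQ+P_0-\eta(P_0)-Q'\bigr)\;\preceq\;\Deg\bigl(pQ+P_0-\eta(P_0)\bigr)-\Deg(Q'),
\]
with equality precisely when the block of $Q'$ refines that of $Q$ in an order-compatible way. Summing over $Q\in\Delta_k^\pm(I,P_0)$, the right-hand side is \emph{independent of $\tau$}; hence $\tau$ attains the maximal total degree if and only if equality holds term by term, and a short bootstrap on the level $\ell$ of the stratification $\mathscr S_\ell$ (Proposition~\ref{pro2}, via Corollary~\ref{corollary2}) forces $\tau(\SS)=\SS$ for every $\SS$. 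No exchange argument or ad hoc case analysis of chain splitting is required.

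Second, and more seriously, you have not handled the possibility that the block-preserving contributions cancel. You flag ``handling ties'' as something to check, but this is not a detail that can be absorbed into the present argument: if the right-hand side of \eqref{equation section 5 (1)} were zero, then by Lemma~\ref{lemma leading term} the leading term of the full determinant would come from strictly lower-degree permutations and \eqref{equation section 5 (1)} would simply be false. The paper resolves this by explicitly \emph{assuming} Proposition~\ref{lemma section 5 (2)}, which guarantees each block determinant is a $p$-adic unit times a nonzero monomial. In other words, Proposition~\ref{lemma section 5 (1)} is proved conditionally on Proposition~\ref{lemma section 5 (2)}, and you must make this logical dependence explicit.
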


\begin{proposition}\label{lemma section 5 (2)}
	For every point $P_0\in \Delta^-$ and every subset $\SS\in \mathscr S(\Delta_k^\pm(I,P_0))$
	we have 
	\begin{equation}
	\det\Big(\LD(\widetilde{e}_{P_0-\eta(P_0)+pQ-Q'}^{\res})\Big)_{Q,Q'\in \SS}=b_{P_0,\SS} \times g_{P_0,\SS}({\underline{\zeta}})\pi^{\sum\limits_{Q\in \SS}(\lfloor pw(Q)\rfloor-\lfloor w(Q-P_0+\eta(P_0))\rfloor )},
	\end{equation}
	where $b_{P_0,\SS}$ is a $p$-adic unit in $\ZZ_p$ and $g_{P_0,\SS}({\underline{\zeta}})$ is a monomial in $\ZZ_p[\zeta_1,\zeta_2,\dots,\zeta_n]$. 
\end{proposition}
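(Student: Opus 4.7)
The plan is to expand the determinant as a signed sum over permutations $\sigma \colon \SS \to \SS$ and identify the identity as the unique permutation contributing the top monomial in the reverse-lex partial degree order. Write each $Q \in \SS = \Delta_k^\pm(I,P_0;S_1,\dots,S_\ell)$ in the form $Q = \sum_{j=1}^{\ell} z_j \mathbf V_{S_j}$ with $0 < z_1 < \cdots < z_\ell$. Since all $i \in S_j$ must share a common coordinate in the $\mathbf V_i$-basis (otherwise level sets cannot match), the $r_i$-values of $P_0$ agree on each $S_j$; denoting this common value $\rho_j \in [0,1)$, I set $z_j = \rho_j + m_j$ with $m_j \in \ZZ_{\geq 0}$ (and $m_j \geq 1$ when $\rho_j = 0$). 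This parameterizes $\SS$ by tuples $(m_1,\dots,m_\ell)$ satisfying explicit monotonicity and range bounds.

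A direct computation shows the $\mathbf V_{S_j}$-coordinate of $pQ - Q' + P_0 - \eta(P_0)$ equals $\lfloor p\rho_j \rfloor + p m_j - m'_j$. Hence the argument lies in $\Lambda_\Delta$ iff $m'_j \leq p m_j + \lfloor p\rho_j \rfloor$ for every $j$, and in that case Lemma~\ref{lemma valuation of P}(1) presents $\LD(\widetilde e^{\res}_{pQ-Q'+P_0-\eta(P_0)})$ as a monomial whose $\widetilde a$-exponents are prescribed by the level-set decomposition of the tuple $(\lfloor p\rho_j \rfloor + p m_j - m'_j)_j$, whose scalar coefficient is a product of Artin--Hasse constants $c_{\ast}$ (each a $p$-adic unit since the relevant indices are less than $p$ under Hypothesis~\ref{hypothesis}), and whose $\pi$-exponent equals $\max_j(\lfloor p\rho_j \rfloor + p m_j - m'_j)$.

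The heart of the argument is to show that the identity is the unique maximizer, in reverse-lex order, of the vector-valued partial degree of $\prod_{Q \in \SS} \LD(\widetilde e^{\res}_{pQ-\sigma(Q)+P_0-\eta(P_0)})$ as $\sigma$ ranges over permutations of $\SS$. Since partial degrees add when monomials are multiplied, I will compare a general $\sigma$ to the identity by transpositions: if $\sigma(Q) = Q'$ and $\sigma(Q') = Q$ with $Q \neq Q'$, the strict monotonicity $z_1 < \cdots < z_\ell$ together with the size assumption $p > (n+4)D$ forces the top-level jump (which governs the top coordinate of the degree vector) to be strictly smaller after any such swap, so the identity is a strict local maximum; a cycle-reduction argument extends this from transpositions to arbitrary permutations, yielding uniqueness.

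For $\sigma = \mathrm{id}$, the product unfolds as $\prod_{Q \in \SS} \LD(\widetilde e^{\res}_{(p-1)Q + P_0 - \eta(P_0)})$, which Lemma~\ref{lemma valuation of P}(1) expresses as an explicit monomial $b_{P_0,\SS} \cdot g_{P_0,\SS}(\widetilde{\underline a}) \cdot \pi^{e}$, with $b_{P_0,\SS}$ a product of Artin--Hasse coefficients (hence a $p$-adic unit) and $e = \sum_{Q \in \SS} w((p-1)Q + P_0 - \eta(P_0))$. The final bookkeeping step verifies $w((p-1)Q + P_0 - \eta(P_0)) = \lfloor p w(Q) \rfloor - \lfloor w(Q - P_0 + \eta(P_0)) \rfloor$ by extracting the top level $j = \ell$: from $pw(Q) = p\rho_\ell + pm_\ell$ and $w(Q - P_0 + \eta(P_0)) = \{p\rho_\ell\} + m_\ell$, the right-hand side equals $\lfloor p\rho_\ell \rfloor + (p-1)m_\ell$, matching the left. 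I expect the main obstacle to be the uniqueness step: rigorously ruling out any nontrivial permutation of $\SS$ that matches the identity in reverse-lex partial degree will require a careful case analysis of how transpositions interact with level-set decompositions, depending crucially on the inequality $p > (n+4)D$.
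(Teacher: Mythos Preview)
Your approach has a genuine gap at its core. Once you are inside a single block $\SS=\Delta_k^\pm(I,P_0;S_1,\dots,S_\ell)$, every permutation of $\SS$ contributes the \emph{same} partial degree and the \emph{same} power of $\pi$. Concretely, writing each entry as in Lemma~\ref{lemma valuation of P}(1), the monomial--$\pi$ part of $\LD(\widetilde e^{\res}_{Q+pQ_1-Q_2})$ equals $\gamma(Q)\gamma(pQ_1)\gamma(Q_2)^{-1}$ (this is Lemma~\ref{lemma M1}); since $\gamma(pQ_1)$ depends only on the row and $\gamma(Q_2)^{-1}$ only on the column, the product $\prod_{Q_1}\gamma(pQ_1)\gamma(\sigma(Q_1))^{-1}$ is the same for every permutation $\sigma$. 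So your ``identity is the unique maximizer in reverse-lex order'' claim is false, and no transposition/cycle argument can isolate a single term.

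What actually has to be shown is that, \emph{after} factoring out the common monomial and $\pi$-power (Proposition~\ref{lemma M}), the remaining scalar determinant $\det M\big((z_1,\dots,z_\ell),K^\pm\big)$ of Artin--Hasse coefficients is a $p$-adic unit. Your secondary claim---that each individual $c_\ast$ appearing is a unit because its index is $<p$---is also false: already for the identity permutation the differences $(z_i-z_{i-1})+(p-1)(m_i-m_{i-1})$ exceed $p$ whenever $m_i>m_{i-1}$, and the off-diagonal entries of $M$ involve $c_{w+ip}$ with $i\geq 1$. The paper handles this via Lemma~\ref{lemma M=}: a double induction (on $\ell$ and on $k$) that uses the mod-$p$ recursion $(w-j)c_{w+ip-j}-c_{w+ip-j-1}\equiv c_{w+(i-1)p-j}$ for the Artin--Hasse coefficients to row-reduce $M(\vec w,k)$ to block-triangular form with diagonal blocks $c_{w_1}M(\vec w',k)$ and $M(\vec w,k-1)$. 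This recursion, together with the bounds $n+2<z_i-z_{i-1}<p-(n+2)$ coming from Hypothesis~\ref{hypothesis}, is the substantive content you are missing.
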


\begin{proof}[Proof of Theorem~\ref{lemma for res}(Assuming Propositions~\ref{lemma section 5 (1)} and~\ref{lemma section 5 (2)})]
	
	By Property~\ref{property of valuation}, Propositions~\ref{proposition first}, ~\ref{lemma section 5 (1)} and~\ref{lemma section 5 (2)}, we have 
	\begin{align*}\label{equation dec}
	&\LD\Big(\det(\widetilde{e}_{pQ-Q'}^{\res})_{Q, Q'\in \Delta_k^\pm}\Big)\\
	=&\prod_{P_0\in \Delta^-}\prod_{I\subseteq I(P_0)}\LD\Big(\det(\widetilde{e}_{P_0-\eta(P_0)+pQ-Q'}^{\res})_{Q,Q'\in\Delta^\pm_k(I,P_0)}\Big)\tag{Prop~\ref{proposition first}}\\
	=&\prod_{P_0\in \Delta^-}\prod_{I\subseteq I(P_0)}\prod_{\SS\in\mathscr S(\Delta_k^\pm(I,P_0))} 
	\det\Big(\LD(\widetilde{e}_{P_0-\eta(P_0)+pQ-Q'}^{\res})\Big)_{Q,Q'\in \SS}\tag{Prop~\ref{lemma section 5 (1)}}\\
	=&\prod_{P_0\in \Delta^-}\prod_{I\subseteq I(P_0)}\prod_{\SS\in\mathscr S(\Delta_k^\pm(I,P_0))} b_{P_0,\SS} \times g_{P_0,\SS}({\underline{\zeta}})\pi^{\sum\limits_{Q\in \SS}(\lfloor pw(Q)\rfloor-\lfloor w(Q-P_0+\eta(P_0))\rfloor )}\tag{Prop~\ref{lemma section 5 (2)}}\\
	=&\pi^{h(\Delta_k^{\pm})}\prod_{P_0\in \Delta^-}\prod_{I\subseteq I(P_0)}\prod_{\SS\in\mathscr S(\Delta_k^\pm(I,P_0))} b_{P_0,\SS} \times g_{P_0,\SS}({\underline{\zeta}}),
	\end{align*}
where the last equality comes from  
\begin{multline*}
h(\Delta_k^\pm)=\coprod_{P_0\in \Delta^-}\coprod_{I\subseteq I(P_0)}\coprod_{\SS\in\mathscr S(\Delta_k^\pm(I,P_0))}\SS\\=\coprod_{P_0\in \Delta^-}\coprod_{I\subseteq I(P_0)}\coprod_{\SS\in\mathscr S(\Delta_k^\pm(I,P_0))}\{Q-P_0+\eta(P_0)\;|\;Q\in\SS\}.\qedhere
\end{multline*}
\end{proof}

%\begin{proposition}
%The term $$\prod_{S\in\mathscr S(P_0)} 
%\det\Big(\LD(\widetilde{e}_{pP_0-\eta(P_0)+pP-Q}^{\res})\Big)_{P,Q\in S}$$ is of the form $b(T)g(\underline{a})$ such that
%\begin{itemize}
%	\item $b(T)\not\equiv 0\quad \textrm{mod}\quad (T^{h(\Delta^{\Int}_1)+1}, p)$.
%	\item $g(\underline{a})$ is a monomial.
%\end{itemize}
%\end{proposition}

Now we prove Propositions~\ref{lemma section 5 (1)} and~\ref{lemma section 5 (2)}.

\subsection{Proof of Proposition~\ref{lemma section 5 (1)} assuming Proposition~\ref{lemma section 5 (2)}}
\label{section 5.1}

The essence of the proof is the only terms that contribute to the left hand side of 
\eqref{equation section 5 (1)} come from the determinant of the leading terms on its right hand side.

\begin{lemma}\label{beautiful lemma}
	Let $$\coprod\limits_{j=1}^{\ell'}S'_j\subseteq \coprod\limits_{i=1}^{\ell}S_i\subseteq\{1,2,\dots,n'\}.$$ For  every two points $Q=\sum\limits_{i=1}^\ell z_i\mathbf V_{S_i}$ and $Q'=\sum\limits_{j=1}^{\ell' }z'_j\mathbf V_{S'_j}$ in $\Lambda_{\Delta}$ such that \begin{itemize}
		\item[(1)] $0< z_1< z_2<\cdots< z_\ell$ and $0< z'_1< z'_2<\cdots< z'_{\ell'}$;
		\item[(2)]  $z_{i}-z_{i-1}>z'_{\ell'}-z'_1$ for every $1\leq i\leq \ell$,
	\end{itemize} 
	then we have \begin{equation}\label{eq}
	\Deg(Q-Q')\preceq\Deg(Q)-\Deg(Q')
	\end{equation}
	with the equality if and only if
	for every $1\leq i\leq \ell$, there is $1\leq j_i\leq \ell'$ such that 
	\begin{itemize}
		\item[(I)] 	$S_i\subseteq S'_{j_i}$, and
		\item[(II)] $j_{i_1}\leq j_{i_2}$ for every $1\leq i_1 < i_2\leq \ell$.
	\end{itemize}
\end{lemma}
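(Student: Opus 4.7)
The plan is to reformulate the lemma as a lex-comparison of sorted coefficient vectors and then apply a Ky--Fan / rearrangement inequality. Writing $Q$ and $Q'$ in the basis $\mathbf V_1,\dots,\mathbf V_n$ as $\sum_k a_k \mathbf V_k$ and $\sum_k b_k \mathbf V_k$, for any $Q_1 \in \Lambda_\Delta$ with nonnegative coefficients I set $t_m(Q_1)$ to be the $m$-th smallest of its coefficients. A direct check from Definition~\ref{definition of partial order}(3) shows $\Deg(Q_1)_m = t_m(Q_1) - t_{m-1}(Q_1)$ (with $t_0 := 0$): the positions $n+1-\sum_{j\geq i}\#S_j$ are exactly the jumps of the sorted-ascending coefficient vector of $Q_1$, with jump sizes $z_i - z_{i-1}$. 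Hence partial sums of $\Deg$ recover $t_m$, and the claim $\Deg(Q-Q') \preceq \Deg(Q) - \Deg(Q')$ translates to the lex comparison of $(t_m(Q-Q'))_m$ against $(t_m(Q) - t_m(Q'))_m$.

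The inequality itself follows from the cumulative bound
\[
\sum_{j \leq m} t_j(Q-Q') \;\leq\; \sum_{j \leq m} t_j(Q) - \sum_{j \leq m} t_j(Q'), \qquad \forall m.
\]
To prove it, I would pick $K_m \subseteq \{1,\dots,n\}$ to realize the minimum $\sum_{k \in K_m} a_k = \sum_{j \leq m} t_j(Q)$; then $\sum_{k \in K_m}(a_k - b_k) \geq \sum_{j \leq m} t_j(Q-Q')$ (a sum of $m$ values of $a_k - b_k$ is at least the sum of the $m$ smallest such) and $\sum_{k \in K_m} b_k \geq \sum_{j \leq m} t_j(Q')$ together give the bound. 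Now at the first $m^*$ where $(t_m(Q-Q'))$ and $(t_m(Q)-t_m(Q'))$ disagree, subtracting the equalities for $m<m^*$ from the cumulative bound at $m^*$ forces strict $t_{m^*}(Q-Q') < t_{m^*}(Q) - t_{m^*}(Q')$, which is the required strict reverse-lex inequality on the $\Deg$'s.

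For the equality characterisation, assume (I) and (II). The hypothesis $\coprod S'_j \subseteq \coprod S_i$ combined with (I) forces $S'_j = \coprod_{i\,:\,j_i = j} S_i$ and forces the map $i \mapsto j_i$ to be surjective onto $\{1,\dots,\ell'\}$, so $Q-Q' = \sum_i (z_i - z'_{j_i}) \mathbf V_{S_i}$; condition~(2) together with (II) guarantees $z_i - z_{i-1} > z'_{\ell'} - z'_1 \geq z'_{j_i} - z'_{j_{i-1}}$, making the coefficients $z_i - z'_{j_i}$ strictly increasing in $i$, so the presentation is in standard form. Matching positions $p_i(Q)$ with $p_j(Q') = p_{i^-_j}(Q)$ (where $i^-_j$ is the smallest $i$ with $j_i = j$) then yields $\Deg(Q-Q') = \Deg(Q) - \Deg(Q')$ position by position. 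Conversely, failure of (I) means some $S_i$-block carries multiple $b$-values, and failure of (II) means the $b$-values across blocks reverse the $a$-order; in either case the simultaneous sorting of $a$, $b$, and $a-b$ by a single permutation fails, and the rearrangement chain of the previous paragraph becomes strict at some $m^*$.

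The main obstacle I anticipate is the bookkeeping in the converse, particularly when multiple failures of (I)/(II) occur simultaneously or when auxiliary coefficients $z_i$ carried by $S_i \setminus M'$ tie-break against coefficients coming from the adjacent block $i+1$, where condition~(2) only separates ranges by $z'_{\ell'} - z'_1$ rather than by $z'_{\ell'}$. I plan to handle this by localising to the smallest-index offending $i$ and exploiting the gap $z_i - z_{i-1} > z'_{\ell'} - z'_1$ of condition~(2) to pin down the precise block responsible for triggering the strict inequality.
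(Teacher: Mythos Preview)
Your reformulation is correct and genuinely different from the paper's argument. Writing $Q=\sum a_k\mathbf V_k$ and $Q'=\sum b_k\mathbf V_k$, the identity $\Deg(Q_1)_m=t_m(Q_1)-t_{m-1}(Q_1)$ is exactly right, and your cumulative bound
\[
\sum_{j\le m}t_j(Q)=\sum_{k\in K_m}a_k=\sum_{k\in K_m}(a_k-b_k)+\sum_{k\in K_m}b_k\ge\sum_{j\le m}t_j(Q-Q')+\sum_{j\le m}t_j(Q')
\]
is a clean Ky--Fan type argument that proves the inequality without induction. The passage from the cumulative bound to the lex inequality on $\Deg$'s is valid because partial sums recover $t_m$, and lex order on a sequence agrees with lex order on its partial-sum sequence. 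The ``if'' direction of the equality characterisation is also fine.

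The gap you anticipated in the converse is real, and it is exactly where you locate it. Your sketch reads ``simultaneous sorting of $a$, $b$, $a-b$ fails, hence the chain is strict'', but neither implication is justified, and the second is where condition~(2) bites. The hypothesis gives $z_i-z_{i-1}>z'_{\ell'}-z'_1$, \emph{not} $z_i-z_{i-1}>z'_{\ell'}$. So for indices $k\in\coprod S_i\setminus\coprod S'_j$ (where $b_k=0$), the value $a_k-b_k=z_i$ can exceed $z_{i+1}-z'_{\ell'}$, meaning the sorted order of $a-b$ need not respect the $S_i$-block structure of $a$. Your plan to ``localise to the smallest offending $i$'' will have to track these stray indices carefully; it is not clear from the sketch how you prevent them from restoring equality in the cumulative bound at the critical $m^*$.

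The paper sidesteps this by induction on $\ell$: it compares only the single leading entry (position $n+1-\sum_i\#S_i$) of $\Deg(Q-Q')$ and $\Deg(Q)-\Deg(Q')$, using the index $j'=\max\{j:S_1\cap S'_j\ne\emptyset\}$. If $j'>1$ the leading entries already differ ($z_1-z'_{j'}<z_1-z'_1$) and strictness follows; if $j'=1$ then $S_1\subseteq S'_1$, the leading entries agree, and subtracting $z_1\mathbf V_{S_1}$ from $Q$ and $z'_1\mathbf V_{S_1}$ from $Q'$ reduces to the case $\ell-1$. This peeling avoids ever having to compare sorted values across blocks, so the weak form of condition~(2) suffices. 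If you want to salvage your global argument, you will likely need to reintroduce this block-by-block structure anyway, at which point the induction is shorter.
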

\begin{proof}
When $\ell=1$, if $\ell'\leq 1$,  it is easy to show that $$\Deg(Q-Q')=\Deg(Q)-\Deg(Q').$$

Now we assume that $\ell'>1.$ 
By condition (2) and Definition~\ref{definition of partial order}(3), we obtain 
\begin{equation}\label{5.13}
\Deg(Q-Q')=(\overbrace{0,\dots,0}^{n-\sum\limits_{j=1}^\ell \#S_j},(z_1-z'_{\ell'}),\overbrace{ \cdots }^{\sum\limits_{j=1}^\ell \#S_j-1})
\end{equation}
and
\begin{equation}\label{5.14}
\Deg(Q)-\Deg(Q')=(\overbrace{0,\dots,0}^{n-\sum\limits_{j=1}^\ell \#S_j},(z_1-z_1'),\overbrace{\cdots}^{\sum\limits_{j=1}^\ell \#S_j-1}),
\end{equation}
which directly imply $\Deg(Q-Q')\prec\Deg(Q)-\Deg(Q')$.
	
Assume \eqref{eq} and its equality condition hold for $\ell>1$; we will prove them for $\ell+ 1$.
	
We set $j':=\max\{j\;|\; S_1\cap S'_j\neq\emptyset\}$. By condition (2) and Definition~\ref{definition of partial order} (3) again, we have
	\begin{equation}
	\Deg(Q-Q')=(\overbrace{0,\dots,0}^{n-\ell-1},(z_1-z'_{j'}),\dots)
	\end{equation}
and
	\begin{equation}
	\Deg(Q)-\Deg(Q')=(\overbrace{0,\dots,0}^{n-\ell-1},(z_1-z_1'),\dots).
	\end{equation}

	If $j'>1$, then $$\Deg(Q-Q')\prec\Deg(Q)-\Deg(Q').$$
	Then we show that in this case (I) and (II) cannot be both satisfied. Otherwise, from (I), we have $S_1\subseteq S_{j'}$. Since $j'>1$, there exists $1< i\leq \ell$ such that $S_i\subseteq S'_{1}$. Therefore, we obtain $j_1=j'>1=j_i$, which contradicts to (II).

		If $j'=1$, we have $S_1\subseteq S_1'$, hence $$\coprod\limits_{j=2}^{\ell'}S'_j\subseteq \coprod\limits_{i=2}^{\ell}S_i\subseteq\{1,2,\dots,n'\}.$$ We set $$Q_0:=\sum_{i=2}^{\ell+1} (z_i-z_1)\mathbf{V}_{S_i}\ \textrm{and}\ Q_0':=\sum_{j=2}^{\ell'} (z'_j-z'_1)\mathbf{V}_{S_j'}.$$ 
		It is easy to check that  $Q_0$ and $Q_0'$ also satisfy the conditions (1) and (2). 
		By condition (2) and Lemma~\ref{lemma valuation of P}, we get
	\[\Deg(Q)-\Deg(Q')-\Deg(Q-Q')=\Deg(Q_0)-\Deg(Q_0')-\Deg(Q_0-Q_0').\]
Therefore, 
	we are left to show that this result holds for $Q_0$ and $Q'_0$, which follows directly from the induction.
\end{proof}

\begin{lemma}\label{lemma1}
Let $\coprod\limits_{i=1}^{\ell}S_i\subseteq\{1,2,\dots,n'\}$ and $Q=\sum\limits_{i=1}^\ell z_i\mathbf{\mathbf{V}}_{S_i} \in \Cone(\Delta)$ such that $0<z_1<\cdots<z_\ell$. The point $(p-1)Q-\eta(Q\%)$ is a linear combination of $\{\mathbf{V}_{S_i}\}_{i=1}^\ell$ with integer coefficients, i.e. $$(p-1)Q-\eta(Q\%)
=\sum\limits_{i=1}^\ell t_i\mathbf{\mathbf{V}}_{S_i},$$ 
and  $t_{i}-t_{i-1}>n+2$ for every $1\leq i\leq \ell$, where $t_0=0$. 
\end{lemma}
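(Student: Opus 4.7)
The plan is to compute $Q\%$ and $\eta(Q\%)$ explicitly in terms of the $z_i$'s and their fractional parts, and then bound the differences $t_i - t_{i-1}$ using the size hypothesis on $p$.

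First, I would write each $z_i = \lfloor z_i\rfloor + \{z_i\}$. Because the subsets $S_1,\dots,S_\ell$ are pairwise disjoint, the decomposition $Q = \sum_i \lfloor z_i\rfloor\,\mathbf V_{S_i} + \sum_i \{z_i\}\,\mathbf V_{S_i}$ has its first summand in $\Lambda_\Delta$ and its second summand in $\Delta^\circ$ (all coordinates in $[0,1)$), so $Q\% = \sum_i \{z_i\}\,\mathbf V_{S_i}$. Applying the same reasoning to $p\cdot Q\%$, and using the identity $\{p\{z_i\}\} = \{pz_i\}$, yields $\eta(Q\%) = (p\cdot Q\%)\% = \sum_i \{pz_i\}\,\mathbf V_{S_i}$. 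Subtracting then gives the clean formula $t_i = (p-1)z_i - \{pz_i\} = \lfloor pz_i\rfloor - z_i$.

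Second, I would estimate the gap (with the convention $z_0 = 0$, so that $t_0 = 0$):
\[
t_i - t_{i-1} = (p-1)(z_i - z_{i-1}) \;-\; \big(\{pz_i\} - \{pz_{i-1}\}\big).
\]
The fractional-part contribution lies strictly in $(-1,1)$. For the main term, Definition~\ref{definition of D} forces each $z_j \in \tfrac{1}{D}\ZZ_{\geq 0}$, and since the $z_j$'s are strictly increasing we get $z_i - z_{i-1} \geq \tfrac{1}{D}$. Combining this with Hypothesis~\ref{hypothesis} (which, since $p$ and $(n+4)D$ are both integers, gives $p - 1 \geq (n+4)D$) yields $(p-1)(z_i - z_{i-1}) \geq n+4$. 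Hence $t_i - t_{i-1} > (n+4) - 1 = n+3 > n+2$, as required.

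The entire content of the proof lies in the explicit fractional-part description of the residue map and of $\eta$; once this is in place, the gap estimate is a one-line arithmetic consequence of the lower bound on $p$ built into Hypothesis~\ref{hypothesis}. I do not anticipate any genuine obstacle here---the only care needed is in verifying that the seemingly ``naive'' fractional-part formulas really compute $Q\%$ and $\eta(Q\%)$ (which they do, by uniqueness of the $\Delta^\circ + \Lambda_\Delta$ decomposition and the disjointness of the $S_i$'s).
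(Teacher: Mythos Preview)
Your proof is correct and follows essentially the same approach as the paper: both derive the explicit formula $t_i = \lfloor pz_i\rfloor - z_i$ and then bound $t_i - t_{i-1} \geq (p-1)(z_i - z_{i-1}) - 1 \geq \tfrac{p-1}{D} - 1 > n+2$ using Hypothesis~\ref{hypothesis}. In fact your write-up is more complete, since the paper simply asserts the formula for $t_i$ as a ``fact'' without spelling out the fractional-part computation of $Q\%$ and $\eta(Q\%)$ that you provide.
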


\begin{proof}
	 Taking $t_{i}=\lfloor pz_i\rfloor-z_i$ we prove  $$(p-1)Q-\eta(Q\%)=\sum\limits_{i=1}^\ell t_i\mathbf{\mathbf{V}}_{S_i}.$$
	 Combined with our  assumption that $p>D(n+4)$, this implies that 
	\begin{equation*}\label{estimate}
	t_{i+1}-t_i\geq (p-1)(z_{i+1}-z_{i})-1\geq \frac{p-1}{D}-1>n+2\geq 0
	\end{equation*}
	for every $1\leq i\leq \ell$.
\end{proof} 

\begin{lemma}\label{lemma2}
	Let $1\leq k\leq n+2$, $\coprod\limits_{i=1}^{\ell}S_i\subseteq\{1,2,\dots,n'\},$ and $\coprod\limits_{j=1}^{\ell'}S'_j\subseteq\{1,2,\dots,n'\}.$ For every pair of points $Q\in \Delta_k^\pm(I,P_0;S_1,\dots,S_\ell)$ and $Q'\in \Delta_k^\pm(I,P_0;S'_1,S'_2,\dots,S'_{\ell'})$, we have \begin{equation*}\label{eqQ1}
	\Deg(pQ-\eta(P_0)+P_0-Q')\preceq\Deg(pQ-\eta(P_0)+P_0)-\Deg(Q'),
	\end{equation*}
with the equality 
	if and only if $S_i$'s and $S_j'$'s satisfy that
	\begin{itemize}
		\item for every $1\leq i\leq \ell'$ there exists $1\leq j_i\leq \ell$ such that $S_{i}
		\subseteq S'_{j_i}$, and
		\item  for every $1\leq i_1\leq \ell'$ and $1\leq i_2\leq \ell'$ if $i_1<i_2$, then $j_{i_1}\leq j_{i_2}$.
	\end{itemize}

\end{lemma}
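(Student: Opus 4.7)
The plan is to reduce Lemma~\ref{lemma2} to Lemma~\ref{beautiful lemma} by first putting $pQ - \eta(P_0) + P_0$ into the canonical form required there, and then verifying its hypotheses. Since $Q \in \Delta_k^\pm(I, P_0; S_1, \ldots, S_\ell)$ forces $Q\% = P_0$, writing $Q = \sum_{i=1}^\ell z_i \mathbf{V}_{S_i}$ with $0 < z_1 < \cdots < z_\ell$ gives $P_0 = \sum_i \{z_i\} \mathbf{V}_{S_i}$ and $\eta(P_0) = \sum_i \{p z_i\} \mathbf{V}_{S_i}$; by the same computation that underlies Lemma~\ref{lemma1}, I would obtain the canonical decomposition
\[
pQ - \eta(P_0) + P_0 = \sum_{i=1}^\ell w_i \mathbf{V}_{S_i}, \qquad w_i := \lfloor p z_i \rfloor + \{z_i\},
\]
with $w_1 < w_2 < \cdots < w_\ell$ and the same subsets $S_i$ (this is the key observation that allows the matching condition to transfer).

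Next, I would verify the gap condition $w_{i+1} - w_i > z'_{\ell'} - z'_1$ of Lemma~\ref{beautiful lemma}(2). On the $Q^*$-side,
\[
w_{i+1} - w_i = p(z_{i+1} - z_i) - \bigl(\{p z_{i+1}\} - \{p z_i\}\bigr) + \bigl(\{z_{i+1}\} - \{z_i\}\bigr) > \tfrac{p}{D} - 2 > n+2,
\]
using $z_{i+1} - z_i \geq 1/D$ (Definition~\ref{definition of D}) together with Hypothesis~\ref{hypothesis} ($p > D(n+4)$). On the $Q'$-side, $Q' \in \Delta_k^\pm(I, P_0)$ forces $z'_j \in (0,k]$, hence $z'_{\ell'} - z'_1 < k \leq n+2$. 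Combining the two estimates gives $w_{i+1} - w_i > z'_{\ell'} - z'_1$ as required.

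Finally, I would apply Lemma~\ref{beautiful lemma} to the pair $(Q^* := pQ - \eta(P_0) + P_0,\, Q')$ with data $(\{w_i\}, \{S_i\})$ for $Q^*$ and $(\{z'_j\}, \{S'_j\})$ for $Q'$. The remaining hypothesis $\coprod_j S'_j \subseteq \coprod_i S_i \subseteq \{1,\ldots,n'\}$ is automatic: by the definition of $\Delta_k^\pm(I)$, both unions equal $\{1,\ldots,n'\}$. Lemma~\ref{beautiful lemma} then yields $\Deg(Q^* - Q') \preceq \Deg(Q^*) - \Deg(Q')$, with equality precisely under the matching condition recorded in Lemma~\ref{lemma2}.

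The main obstacle is the gap estimate of Step~2; this is exactly where both Hypothesis~\ref{hypothesis} and the constraint $k \leq n+2$ enter in an essential way, and the rest is formal bookkeeping. A minor technical subtlety is that Lemma~\ref{beautiful lemma} is stated for points in $\Lambda_\Delta$, while the coefficients $w_i \in \tfrac{1}{D}\ZZ_{\geq 0}$ are not necessarily integral; however, its proof uses only the strict ordering of the coefficients and the disjointness of the $S_i$'s, so it applies verbatim to $Q^*$ (alternatively, one may clear denominators by $D$ to reduce to the integral case).
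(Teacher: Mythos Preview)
Your proof is correct and follows essentially the same route as the paper: write $pQ - \eta(P_0) + P_0 = \sum_i t_i \mathbf{V}_{S_i}$, verify the gap estimate $t_i - t_{i-1} > n+2 \geq z'_{\ell'} - z'_1$ (the paper invokes Lemma~\ref{lemma1} for this while you compute it directly), and then apply Lemma~\ref{beautiful lemma}. Your explicit remark about the non-integrality of the $w_i$ is a welcome addition, since the paper's proof tacitly assumes Lemma~\ref{beautiful lemma} applies without addressing this point.
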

\begin{proof}
	Let $$Q=\sum_{i=1}^{\ell}z_i \mathbf{V}_{S_i}, \quad pQ-\eta(P_0)+P_0
	=\sum_{i=1}^{\ell}t_i \mathbf{V}_{S_i}\quad  \textrm{and}\quad  Q'=\sum_{j=1}^{\ell'}z'_{j} \mathbf{V}_{S_j'}$$
	such that $0<z_1<\cdots<z_\ell$ and  $0<z'_1<\cdots<z'_{\ell'}.$
	
	 By Lemma~\ref{lemma1}, we have 
	\begin{equation}\label{estimate1}
	t_{i}-t_{i-1}>n+2\geq z'_{\ell'}-z'_1,
	\end{equation}
	for every $1\leq i\leq \ell$.
	Then it follows directly from Lemma~\ref{beautiful lemma}.
%	for every $1\leq i\leq \ell-1$, if $t_{i}=t_{i+1}$, then $m_{i}=m_{i+1}$
%	
%	\Ren{finish the proof}
%	By Lemma~\ref{beautiful lemma}, it is easy to check that the equality in \eqref{estimate1} holds if and only if $Q_1$ and $Q_2$ are in the same $S\in \mathscr S(P_0)$.
\end{proof}
\begin{notation}
	We set 
	$$K:=\min\{i\;\big|\;\mathscr S_i(\Delta_k^\pm(I,P_0))\neq \emptyset\}.$$
\end{notation}
\begin{corollary}\label{corollary2}
	Let $\SS\in \mathscr S_\ell(\Delta_k^\pm(I,P_0))$. If $Q\in \SS$ and $Q'\in \Delta_k(I, P_0)$ satisfy \begin{equation}
	\Deg(pQ-\eta(P_0)+P_0-Q')=\Deg(pQ-\eta(P_0)+P_0)-\Deg(Q'),
	\end{equation}
	then
	\begin{equation}
	Q'\in \coprod_{j=K}^\ell \coprod_{\SS'\in\mathscr S_j(\Delta_k^\pm(I,P_0))}\SS'. 
	\end{equation}
	Moreover, if $$Q'\in \coprod_{\SS'\in\mathscr S_\ell(\Delta_k^\pm(I,P_0))}\SS',$$ then $Q'\in \SS$.
\end{corollary}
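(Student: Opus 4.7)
My plan is to extract the corollary directly from the equality condition in Lemma~\ref{lemma2}, combined with the structural constraint that points in $\Delta_k^\pm(I,P_0)$ must have a strictly positive coefficient at every index outside $I$. After the relabeling built into Notation~\ref{n'}, namely $I = \{n'+1,\dots,n\}$, I would write $Q = \sum_{i=1}^{\ell} z_i \mathbf V_{S_i}$ and $Q' = \sum_{j=1}^{\ell'} z'_j \mathbf V_{S'_j}$ in their canonical layer expansions. The positivity requirement then forces
$$\coprod_{i=1}^{\ell} S_i \;=\; \{1,2,\dots,n'\} \;=\; \coprod_{j=1}^{\ell'} S'_j,$$
which is the structural fact that drives everything else.

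Next I would invoke Lemma~\ref{lemma2} (via its underlying Lemma~\ref{beautiful lemma}): the equality between $\Deg(pQ-\eta(P_0)+P_0-Q')$ and $\Deg(pQ-\eta(P_0)+P_0)-\Deg(Q')$ produces an order-preserving map $\varphi: \{1,\dots,\ell\} \to \{1,\dots,\ell'\}$, $i \mapsto j_i$, satisfying $S_i \subseteq S'_{j_i}$. Taking the union over $i$, I get $\{1,\dots,n'\} = \bigcup_i S_i \subseteq \bigcup_i S'_{j_i}$; since the $S'_j$ disjointly partition $\{1,\dots,n'\}$, every index $j \in \{1,\dots,\ell'\}$ must appear in the image of $\varphi$. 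Hence $\varphi$ is surjective, which immediately gives $\ell' \leq \ell$. Combined with the trivial lower bound $\ell' \geq K$ (which holds by the very definition of $K$ as $\min\{i : \mathscr S_i(\Delta_k^\pm(I,P_0)) \neq \emptyset\}$), this places $Q'$ in $\coprod_{j=K}^{\ell}\coprod_{\SS'\in\mathscr S_j(\Delta_k^\pm(I,P_0))}\SS'$, proving the first assertion.

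For the moreover clause, when $\ell' = \ell$ the map $\varphi$ is a surjective order-preserving map between two ordered sets of the same finite cardinality, hence the identity. Therefore $j_i = i$ and $S_i \subseteq S'_i$ for every $i$; together with $\coprod_{i} S_i = \coprod_{i} S'_i$ and disjointness of both families, a cardinality count forces $S_i = S'_i$ for all $i$. Consequently $Q' \in \Delta_k^\pm(I,P_0;\,S_1,\dots,S_\ell) = \SS$, as claimed.

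The only step that genuinely requires care is the surjectivity of $\varphi$: this is precisely where the ``all coordinates outside $I$ are strictly positive'' hypothesis defining $\Delta_k^\pm(I,P_0)$ is used, and without it one could not exclude ``extra'' layers $S'_j$ of $Q'$ that are missed by $\varphi$, so the bound $\ell' \leq \ell$ would fail. Everything else is essentially bookkeeping once this positivity observation is in place.
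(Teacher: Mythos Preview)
Your argument is correct and is exactly the unpacking of the paper's one-line proof (``It follows directly from Lemma~\ref{lemma2}''). The key observation you make explicit---that membership in $\Delta_k^\pm(I,P_0)$ forces $\coprod_i S_i = \{1,\dots,n'\} = \coprod_j S'_j$, whence the order-preserving map $i\mapsto j_i$ from Lemma~\ref{lemma2} is surjective---is precisely what the paper is silently using, and your deduction of $\ell'\le\ell$ and then $S_i=S'_i$ when $\ell'=\ell$ is the intended reasoning.
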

\begin{proof}
	It follows directly from Lemma~\ref{lemma2}.
\end{proof}
\begin{proposition}\label{pro2}
	For a permutation $\tau \in \Iso(\Delta_k^\pm(I,P_0)),$ the equality 
	\begin{equation}\label{equation for tau}
	\Deg(pQ-\eta(P_0)+P_0-\tau(Q))=\Deg(pQ-\eta(P_0)+P_0)-\Deg(\tau(Q))
	\end{equation} holds for every $Q\in \Delta_k^\pm(I,P_0)$ if and only if 
	$$\tau(\SS)=\SS$$
	for every $\SS\in \mathscr S(\Delta_k^\pm(I,P_0))$.
\end{proposition}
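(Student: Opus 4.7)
The easy direction is $(\Leftarrow)$. If $\tau(\SS) = \SS$ for every $\SS \in \mathscr S(\Delta_k^\pm(I,P_0))$, then for each $Q \in \SS = \Delta_k^\pm(I,P_0;S_1,\dots,S_\ell)$ the image $\tau(Q)$ lies in the same stratum, so $Q$ and $\tau(Q)$ have exactly the same supporting chain of subsets $S_1,\dots,S_\ell$. Taking $j_i = i$ (with $S'_i = S_i$) verifies the condition in Lemma~\ref{lemma2}, and equality in \eqref{equation for tau} follows.

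For the harder direction $(\Rightarrow)$, the plan is to induct on the level $\ell \geq K$, with the inductive hypothesis being that $\tau(\SS') = \SS'$ for every $\SS' \in \mathscr S_j(\Delta_k^\pm(I,P_0))$ with $K \leq j < \ell$. The base case $\ell = K$ proceeds as follows. Fix $\SS \in \mathscr S_K(\Delta_k^\pm(I,P_0))$ and $Q \in \SS$. By the first part of Corollary~\ref{corollary2}, applied to $Q' = \tau(Q)$, we find $\tau(Q) \in \coprod_{j=K}^{K}\coprod_{\SS'\in\mathscr S_K}\SS'$, i.e.\ $\tau(Q)$ lies in some level-$K$ stratum. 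The ``moreover'' part of Corollary~\ref{corollary2} then forces $\tau(Q) \in \SS$. Hence $\tau(\SS) \subseteq \SS$, and since $\tau$ is a bijection of the finite set $\Delta_k^\pm(I,P_0)$ and $\SS$ is finite, we get $\tau(\SS) = \SS$.

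For the inductive step, fix $\SS \in \mathscr S_\ell(\Delta_k^\pm(I,P_0))$ and $Q \in \SS$. Corollary~\ref{corollary2} gives $\tau(Q) \in \coprod_{j=K}^\ell \coprod_{\SS'\in\mathscr S_j}\SS'$. By the inductive hypothesis, $\tau$ already bijects every level-$j$ stratum onto itself for $K \leq j < \ell$, so the union of these strata is entirely in the image of $\tau$ restricted to itself. Consequently $\tau(Q)$ cannot lie in any stratum of level strictly below $\ell$, so $\tau(Q)$ must lie in some level-$\ell$ stratum. Applying the ``moreover'' statement of Corollary~\ref{corollary2} again yields $\tau(Q) \in \SS$, and another finiteness/bijectivity argument gives $\tau(\SS) = \SS$.

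The main technical content is entirely absorbed by Lemma~\ref{lemma2} and its Corollary~\ref{corollary2}; the only real step here is the bookkeeping that rules out $\tau$ sending a higher-level stratum into a lower-level one, which is the role of the induction. No subtle obstacle is expected, since the stratification is finite and the bijectivity of $\tau$ combined with the upward-closed form of the inductive hypothesis closes the argument cleanly.
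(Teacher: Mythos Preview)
Your proof is correct and follows essentially the same approach as the paper: both directions invoke Lemma~\ref{lemma2} and Corollary~\ref{corollary2} in the same way, and the forward direction is handled by the same induction on the level $\ell \geq K$, using bijectivity of $\tau$ to rule out dropping to lower strata and then applying the ``moreover'' clause of Corollary~\ref{corollary2} to pin down the exact stratum. Your write-up is slightly more explicit about the finiteness/bijectivity bookkeeping, but the argument is the same.
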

\begin{proof}
	The ``if part'' follows from Lemma~\ref{lemma2}.
	
Now we prove the ``only if part''.
Assume $\tau$ is a permutation of  $\mathscr S(\Delta_k^\pm(I,P_0))$ such that every $Q\in  \Delta_k^\pm(I,P_0)$ satisfies \eqref{equation for tau}.
Combining two statements in 
Corollary~\ref{corollary2} gives that $\tau(\SS_K)=\SS_K$.

Assume that any $K\leq \ell'<\ell$ and any $\SS_{\ell'} \in  \mathscr S_{\ell'}(\Delta_k^\pm(I,P_0))$ satisfies $\tau(\SS_{\ell'})=\SS_{\ell'}$. 
Let $\SS_{\ell}$ be any subset of $\Delta_k^\pm(I,P_0)$ in $\mathscr S_{\ell}(\Delta_k^\pm(I,P_0))$. Combining the induction assumption with Corollary~\ref{corollary2}, we know $$\tau(\SS_{\ell})\subseteq \coprod_{\SS'\in\mathscr S_{\ell}(\Delta_k^\pm(I,P_0))}\SS', $$and hence $\tau(\SS_{\ell})=\SS_{\ell}$.
\end{proof}
	
\begin{notation}
	Let $$\Iso^{\sp}(\Delta_k^\pm(I,P_0)):=\big\{\tau\in \Iso(\Delta_k^\pm(I,P_0))\;\big|\;\tau(\SS)=\SS\ \textrm{for every}\ \SS\in \mathscr S(\Delta_k^\pm(I,P_0))\big\}.$$
\end{notation}
\begin{lemma}\label{lemma if and only if belones to S}
	Let $\tau$ be a permutation of $\Delta_k^\pm(I,P_0))$. 
We have
$$\Deg(\prod_{Q\in \Delta_k^\pm(I,P_0)}\widetilde{e}_{pQ-\eta(P_0)+P_0-\tau(Q)}^{\res})=\max_{\tau'\in \Iso(\Delta_k^\pm(I,P_0))}\Big(\Deg(\prod_{Q\in \Delta_k^\pm(I,P_0)}\widetilde{e}_{pQ-\eta(P_0)+P_0-\tau'(Q)}^{\res})\Big)$$
		if and only if
		$\tau\in \Iso^{\sp}(\Delta_k^\pm(I,P_0)).$
%		\item  	for every $S\in \mathscr S(P_0)$ and $\tau_1\in \Iso(S)$,
%		we have
%		$$\LD(\prod_{P\in S}\widetilde{e}_{pP_0-\eta(P_0)+pP-\tau_1(P)}^{\res})
%		=??$$

\end{lemma}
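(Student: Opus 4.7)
The plan is to unpack both sides of the proposed equality using additivity of $\Deg$ on products, reduce to comparing sums of degrees of single index points, and then recognize that the terms for which $\Deg$ achieves its maximum are exactly those characterized by Proposition~\ref{pro2}.

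First, I would apply Property~\ref{property of valuation}(2)(II) to rewrite
\[
\Deg\Big(\prod_{Q\in \Delta_k^\pm(I,P_0)}\widetilde{e}^\res_{pQ-\eta(P_0)+P_0-\tau(Q)}\Big) = \sum_{Q\in \Delta_k^\pm(I,P_0)} \Deg\big(\widetilde{e}^\res_{pQ-\eta(P_0)+P_0-\tau(Q)}\big).
\]
Since $\eta(P_0)\equiv pP_0\pmod{\Lambda_{\Delta}}$ and both $Q$ and $\tau(Q)$ lie in $P_0+\Lambda_{\Delta}$, the index $pQ-\eta(P_0)+P_0-\tau(Q)$ lies in $\Lambda_{\Delta}$, so Lemma~\ref{lemma valuation of P}(2) lets me replace each $\Deg(\widetilde{e}^\res_{\bullet})$ by $\Deg(\bullet)$. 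The target expression thus becomes $\sum_{Q}\Deg(pQ-\eta(P_0)+P_0-\tau(Q))$.

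Second, I would apply Lemma~\ref{lemma2} stratum by stratum: each $Q$ lies in a unique element of $\mathscr S(\Delta_k^\pm(I,P_0))$, and so does $\tau(Q)$, so the hypothesis of Lemma~\ref{lemma2} is met, yielding
\[
\Deg(pQ-\eta(P_0)+P_0-\tau(Q)) \;\preceq\; \Deg(pQ-\eta(P_0)+P_0) - \Deg(\tau(Q))
\]
for every $Q$. Because the order $\succ$ from Definition~\ref{definition of partial order} is additive, summing these inequalities over $Q$ preserves $\preceq$, and since $\tau$ permutes $\Delta_k^\pm(I,P_0)$ we have $\sum_{Q}\Deg(\tau(Q))=\sum_{Q}\Deg(Q)$. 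Hence
\[
\sum_Q \Deg(pQ-\eta(P_0)+P_0-\tau(Q)) \;\preceq\; \sum_Q \Deg(pQ-\eta(P_0)+P_0) - \sum_Q \Deg(Q),
\]
with a right-hand side independent of $\tau$. This is therefore the universal maximum.

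Third, I would identify when equality is attained. The additivity of $\succ$ implies that a single strict inequality among the summands would force a strict inequality of sums, so equality for the sum is equivalent to equality at every single $Q$, i.e.\ to
\[
\Deg(pQ-\eta(P_0)+P_0-\tau(Q)) = \Deg(pQ-\eta(P_0)+P_0) - \Deg(\tau(Q)) \quad \text{for all } Q.
\]
This is precisely the hypothesis of Proposition~\ref{pro2}, which characterizes such $\tau$ as exactly those preserving every stratum $\SS\in\mathscr S(\Delta_k^\pm(I,P_0))$, i.e.\ $\tau\in\Iso^{\sp}(\Delta_k^\pm(I,P_0))$. The main (modest) obstacle is purely bookkeeping: one must check that the reverse lexicographic order behaves as a monomial order under addition so that a strict $\prec$ in any single summand propagates to the global sum; all the substantive combinatorial content has already been established in Lemma~\ref{lemma2} and Proposition~\ref{pro2}.
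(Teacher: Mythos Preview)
Your proposal is correct and follows essentially the same route as the paper: additivity of $\Deg$ via Property~\ref{property of valuation}(2)(II), replacement of $\Deg(\widetilde e^{\res}_{\bullet})$ by $\Deg(\bullet)$ via Lemma~\ref{lemma valuation of P}(2), the termwise bound from Lemma~\ref{lemma2}, and the equality characterization via Proposition~\ref{pro2}. Your extra remarks justifying that the index lies in $\Lambda_\Delta$ and that the reverse lexicographic order is compatible with addition are sound and slightly more explicit than the paper's writeup.
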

\begin{proof}
	By Property~\ref{property of valuation} (2) II, Lemma~\ref{lemma valuation of P} (2), and Lemma~\ref{lemma2}, we know that 
	\begin{align*}
	&\Deg\left(\prod_{Q\in \Delta_k^\pm(I,P_0)}\widetilde{e}_{pQ-\eta(P_0)+P_0-\tau(Q)}^{\res}\right)\\
	=&\sum_{Q\in \Delta_k^\pm(I,P_0)}\Deg\left(\widetilde{e}_{pQ-\eta(P_0)+P_0-\tau(Q)}^{\res}\right)	\tag{Property~\ref{property of valuation}(2)(II)}\\
	=&\sum_{Q\in \Delta_k^\pm(I,P_0)}\Deg\left(pQ-\eta(P_0)+P_0-\tau(Q) \tag{Lemma~\ref{lemma valuation of P}(2)}\right)
	\\\preceq& \sum_{Q\in \Delta_k^\pm(I,P_0)}\left(\Deg\Big(pQ-\eta(P_0)+P_0\Big)-\Deg\Big(\tau(Q)\Big)\tag{Lemma~\ref{lemma2}}\right)
	\\=&\sum_{Q\in \Delta_k^\pm(I,P_0)}\left(\Deg\Big(pQ-\eta(P_0)+P_0\Big)-\Deg(Q)\right).\\
	\end{align*}
	By Proposition~\ref{pro2}, the equality holds in the above inequality if and only if  \[\tau\in \Iso^{\sp}(\Delta_k^\pm(I,P_0)).\qedhere\]
\end{proof}

\begin{lemma}\label{lemma leading term}
	Let $J$ be an index set and $\{g_j\;|\;j\in J\}$ be a set of polynomials in $\Zp[\![T]\!][\zeta_1,\zeta_2,\dots,\zeta_n]$. Denote by $J_0$ the subset of $J$ such that for every $j\in J$, we have
	$$\Deg(g_j)=\max_{i\in J} \Big(\Deg(g_i)\Big)\quad \textrm{if and only if}\quad j\in J_0.$$
	
	Then we have
	either 
	\begin{enumerate}
		\item $\sum\limits_{j\in J_0} \LD(g_j)=\LD(\sum\limits_{j\in J} g_j)$ or
		\item $\sum\limits_{j\in J_0} \LD(g_j)=0$.
	\end{enumerate}  
\end{lemma}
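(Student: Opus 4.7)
The plan is to decompose the sum $\sum_{j \in J} g_j$ into a "top part" and a "lower part", and then argue by dichotomy on whether the top part vanishes. Set $D_0 := \max_{i \in J} \Deg(g_i)$, so by definition $J_0 = \{j \in J : \Deg(g_j) = D_0\}$. Write
\[
\sum_{j \in J} g_j \;=\; \Bigl(\sum_{j \in J_0} \LD(g_j)\Bigr) \;+\; R,
\qquad
R \;:=\; \sum_{j \in J_0}\bigl(g_j - \LD(g_j)\bigr) \;+\; \sum_{j \in J \setminus J_0} g_j.
\]
By Property~\ref{property of valuation}(1), each term $g_j - \LD(g_j)$ for $j\in J_0$ has partial degree strictly less than $D_0$, and by definition of $J_0$, each $g_j$ for $j\notin J_0$ satisfies $\Deg(g_j)\prec D_0$. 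Applying Property~\ref{property of valuation}(2)(I) iteratively to the finite partial sums, we conclude $\Deg(R)\prec D_0$.

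Now I would split into two cases. If $\sum_{j\in J_0}\LD(g_j)=0$, conclusion (2) holds and there is nothing more to prove. Otherwise, I claim that the nonzero polynomial $\sum_{j\in J_0}\LD(g_j)$ has partial degree exactly $D_0$: indeed, for every $j\in J_0$ the polynomial $\LD(g_j)$ is by construction a sum of monomials all of partial degree equal to $\Deg(g_j)=D_0$, so the sum $\sum_{j\in J_0}\LD(g_j)$ is either zero or has all of its monomials of partial degree $D_0$. Combined with $\Deg(R)\prec D_0$, Property~\ref{property of valuation}(2)(IV) gives $\LD\bigl(\sum_j g_j\bigr)=\LD\bigl(\sum_{j\in J_0}\LD(g_j)\bigr)=\sum_{j\in J_0}\LD(g_j)$, which is conclusion (1).

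There is essentially no obstacle here: the lemma is a purely formal consequence of the definition of $\LD$ and the behaviour of $\Deg$ under sums recorded in Property~\ref{property of valuation}. The only mild subtlety is ensuring that "$\LD$ of a sum equals the sum of $\LD$'s" is read with the caveat that the sum of leading terms may cancel, which is precisely why the statement is an "either/or". Provided one interprets $J$ as finite (or observes that only finitely many $g_j$ contribute to any fixed monomial, so the argument can be carried out coefficient-wise), the proof goes through as above.
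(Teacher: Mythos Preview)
Your proof is correct and is exactly the straightforward argument the paper has in mind: the paper's own proof reads ``It is straightforward, we leave the proof to the readers,'' so there is nothing to compare against beyond the obvious decomposition you wrote down.
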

\begin{proof}
	It is straightforward.
%	If $\sum\limits_{j\in J_0} \LD(g_j)\neq0$, then 
%	$$\Deg\Big(\sum\limits_{j\in J_0} \LD(g_j)\Big)=\max_{i\in J} \Big(\Deg(g_i)\Big).$$
%	On the other hand, it is easy to check $$\Deg\Big(\sum\limits_{j\in J} g_j-\sum\limits_{j\in J_0} \LD(g_j)\Big)<\max_{i\in J} \Big(\Deg(g_i)\Big).$$
%	Therefore, by Property~\ref{property of valuation} (IV), we obtain \[\LD(\sum\limits_{j\in J} g_j)=\LD\Big(\sum\limits_{j\in J_0} \LD(g_j)\Big)=\sum\limits_{j\in J_0} \LD(g_j).\qedhere\]
\end{proof}

We prove Proposition~\ref{lemma section 5 (1)} assuming Proposition~\ref{lemma section 5 (2)}.
\begin{proof}[Proof of Proposition~\ref{lemma section 5 (1)}]
	By Property~\ref{property of valuation}(2)(III) and the definition of $\Iso^{\sp}(\Delta_k^\pm(I,P_0))$, we have
	\begin{multline}
	\sum_{\tau\in \Iso^{\sp}(\Delta_k^\pm(I,P_0))}\LD\Big(\sgn(\tau)\prod_{P\in \Delta_k^\pm(I,P_0)}\widetilde{e}_{P_0-\eta(P_0)+pQ-\tau(Q)}^{\res}\Big)\\=\prod_{\SS\in\mathscr S(\Delta_k^\pm(I,P_0))} 
	\det\Big(\LD(\widetilde{e}_{P_0-\eta(P_0)+pQ-Q'}^{\res})\Big)_{Q,Q'\in \SS}.
	\end{multline}
	Assuming Proposition~\ref{lemma section 5 (2)}, we know that 
	\begin{equation}\label{equation neq 0}
\prod_{\SS\in\mathscr S(\Delta_k^\pm(I,P_0))} 
\det\Big(\LD(\widetilde{e}_{P_0-\eta(P_0)+pQ-Q'}^{\res})\Big)_{Q,Q'\in \SS}\neq 0.
	\end{equation}
	Combining \eqref{equation neq 0} with Lemmas~\ref{lemma if and only if belones to S} and ~\ref{lemma leading term}, we obtain \eqref{equation section 5 (1)}.
\end{proof}

%The following lemma shows that we only need to consider the leading terms of each $\widetilde e^\res_{pP-Q}$ in \eqref{equation dec}. It will simplifies the computations since each $\LD(\widetilde e^\res_{pP-Q})$ is monomial. 
%
%
%\begin{lemma}
%	We have
%	$$\LD\Big( \det(\widetilde{e}_{pP_0-\eta(P_0)+pP-Q}^{\res})_{P,Q\in S_{\Delta,k-1}}\Big)=\LD\Big(\det(\LD(\widetilde{e}_{pP_0-\eta(P_0)+pP-Q}^{\res}))_{P,Q\in S_{\Delta,k-1}}\Big).$$
%\end{lemma}
%
%\begin{proof}
%			we have 
%	\begin{enumerate} 
%		\item $\Deg\Big(\det(\widetilde{e}_{pP_0-\eta(P_0)+pP-Q}^{\res})_{P,Q\in S_{\Delta,k-1}}-\det(\LD(\widetilde{e}_{pP_0-\eta(P_0)+pP-Q}^{\res}))_{P,Q\in S_{\Delta,k-1}}\Big)\\
%		<\prod\limits_{P\in S_{\Delta,k-1}}\Deg(\widetilde{e}_{pP_0-\eta(P_0)+pP-P}),$
%		and
%		\item		$\Deg\Big(\det(\LD(\widetilde{e}_{pP_0-\eta(P_0)+pP-Q}^{\res}))_{P,Q\in S_{\Delta,k-1}}\Big)=\prod\limits_{P\in S_{\Delta,k-1}}\Deg(\widetilde{e}_{pP_0-\eta(P_0)+pP-P}).$
%	\end{enumerate}
%Combining the equalities above with Property~\ref{property of valuation} (2) (IV), we complete the proof.
%\end{proof}

\subsection{Proof of Proposition~\ref{lemma section 5 (2)}}\label{section 5.2}

\begin{notation}

We set $$Y_{k}(S_1,\dots,S_\ell):=\Big\{\sum_{i=1}^{\ell} m_i\mathbf V_{S_i}\;\Big|\;m_i\in \ZZ\quad \textrm{and}\quad 0\leq m_1\leq m_2\leq\cdots\leq m_{\ell}\leq k\Big\}.$$ 
\end{notation}

As in Notation~\ref{n'}, we assume that $I=\{n'+1,n'+2,\dots,n\}$.
\begin{lemma}\label{lemma P1}
	For every nonempty set $\Delta_k^\pm(I,P_0;S_1,\dots,S_\ell)$, there exist $$Q_\min=\sum_{i=1}^\ell z_{\min,i}\mathbf V_{S_i}\in \Delta_k^\pm(I,P_0;S_1,\dots,S_\ell),$$ where $0<z_{\min,i}-z_{\min,i-1}\leq1$ for all $1\leq i\leq \ell$,
	and integers $K^\pm$ such that 
	\begin{equation}\label{equation S}
\Delta_k^\pm(I,P_0;S_1,\dots,S_\ell)=Q_\min+Y_{K^\pm}(S_1,\dots,S_\ell).
	\end{equation}

\end{lemma}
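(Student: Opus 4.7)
My plan is to parametrize $\Delta_k^\pm(I,P_0;S_1,\dots,S_\ell)$ by expressing each element in coordinates $(z_1,\dots,z_\ell)$ and then translating the defining constraints into integer conditions on a shifted index $m_j := z_j - z_{\min,j}$. First, I will unpack what it means for $Q=\sum_{j=1}^\ell z_j \mathbf V_{S_j}$ to belong to $\Delta_k^\pm(I,P_0;S_1,\dots,S_\ell)$. Writing $P_0=\sum_{i=1}^{n'} r_i \mathbf V_i$ with $r_i\in[0,1)$ (note $r_i=0$ for $i\in I$), the congruence $Q\equiv P_0 \pmod{\Lambda_\Delta}$ forces $z_j - r_i \in \mathbb Z_{\geq 0}$ for every $i\in S_j$; in particular all $r_i$ with $i\in S_j$ share a common value $r^{(j)}\in[0,1)$, and $z_j$ has fractional part $r^{(j)}$ with $z_j\geq r^{(j)}$. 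Combined with $z_j>0$, the conditions $z_j<z_{j+1}$, and $z_\ell\leq k$ (resp.\ $z_\ell<k$) for the $+$ (resp.\ $-$) version, this reduces the question to a purely combinatorial one about tuples of rationals with prescribed fractional parts.

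Next I will construct $Q_\min$ by greedy minimization: set $z_{\min,1}:=r^{(1)}$ if $r^{(1)}>0$ and $z_{\min,1}:=1$ if $r^{(1)}=0$, and recursively define $z_{\min,j+1}$ to be the least real number strictly greater than $z_{\min,j}$ whose fractional part is $r^{(j+1)}$. A direct case analysis on whether $r^{(j+1)}$ exceeds, equals, or is less than the fractional part of $z_{\min,j}$ shows $z_{\min,j+1}-z_{\min,j}\in(0,1]$ (with $z_{\min,0}:=0$), giving the stated gap bounds. An easy induction on $j$ will show that for every $Q=\sum z_j\mathbf V_{S_j}$ in the set, $z_j\geq z_{\min,j}$: this is the case $j=1$ by the $r^{(1)}$ analysis, and the inductive step uses $z_{j+1}>z_j\geq z_{\min,j}$ together with the congruence $z_{j+1}\equiv r^{(j+1)}\pmod{\mathbb Z}$ to conclude $z_{j+1}\geq z_{\min,j+1}$ by the very minimality of $z_{\min,j+1}$. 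Because the set is nonempty, picking any $Q$ yields $z_{\min,\ell}\leq z_\ell\leq k$ (resp.\ ${<}\,k$), so $Q_\min$ itself lies in $\Delta_k^\pm(I,P_0;S_1,\dots,S_\ell)$.

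Finally I will match the parametrization to $Y_{K^\pm}$. Setting $m_j:=z_j-z_{\min,j}$ gives $m_j\in\mathbb Z_{\geq 0}$, and the strict chain $z_j<z_{j+1}$ translates to $m_{j+1}-m_j>-(z_{\min,j+1}-z_{\min,j})$, a strict inequality by a value in $[-1,0)$, which forces $m_{j+1}\geq m_j$ since $m_j,m_{j+1}\in\mathbb Z$. Conversely any such $(m_1,\dots,m_\ell)$ reconstructs a valid $Q$ with the strict chain holding (since $z_{\min,j+1}-z_{\min,j}>0$). Setting
\[
K^+:=\lfloor k-z_{\min,\ell}\rfloor,\qquad K^-:=\lceil k-z_{\min,\ell}\rceil-1,
\]
the last bound $z_\ell\leq k$ (resp.\ ${<}\,k$) becomes $m_\ell\leq K^+$ (resp.\ $m_\ell\leq K^-$), completing the bijection $\Delta_k^\pm(I,P_0;S_1,\dots,S_\ell)=Q_\min+Y_{K^\pm}(S_1,\dots,S_\ell)$.

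The only subtle point is verifying that the minimality property $z_j\geq z_{\min,j}$ propagates correctly at the boundary between the two cases $r^{(j+1)}>$ (fractional part of $z_{\min,j}$) and $r^{(j+1)}\leq$ (fractional part of $z_{\min,j}$); otherwise the argument is a routine bookkeeping exercise of fractional parts, and nonemptiness of the set is used exclusively (but crucially) to force $K^\pm\geq 0$ so that $Q_\min$ itself is a legitimate element of $\Delta_k^\pm(I,P_0;S_1,\dots,S_\ell)$.
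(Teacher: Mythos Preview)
Your proposal is correct and recovers the same $Q_{\min}$ the paper uses. The paper's own proof is much terser: it simply declares $Q_{\min}$ to be the element of $\Delta_k^\pm(I,P_0;S_1,\dots,S_\ell)$ of minimal degree (in the sense of Definition~\ref{definition of partial order}), proves the gap bound $0<z_{\min,i}-z_{\min,i-1}\le 1$ by contradiction (if some gap exceeds $1$, subtracting $\mathbf V_{S_j}$ yields a point of smaller degree still in the set), and then leaves the identification with $Q_{\min}+Y_{K^\pm}(S_1,\dots,S_\ell)$ as ``obvious'' plus a worked example. Your explicit greedy construction via fractional parts, the induction showing $z_j\ge z_{\min,j}$, and the closed formulas $K^+=\lfloor k-z_{\min,\ell}\rfloor$, $K^-=\lceil k-z_{\min,\ell}\rceil-1$ fill in exactly what the paper omits, and your use of nonemptiness (both to force $\coprod_j S_j=\{1,\dots,n'\}$ with common fractional parts $r^{(j)}$ on each $S_j$, and to guarantee $K^\pm\ge 0$) is the right way to make the sketch rigorous.
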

\begin{proof}
Let $Q_\min$ denote the point in  $\Delta_k^\pm(I,P_0;S_1,\dots,S_\ell)$ with the minimal degree. Now we show that $0<z_{\min,i}-z_{\min,i-1}\leq 1$ for all $1\leq i\leq \ell$. Suppose that it is false, then there exists an integer $j$ such that  $z_{\min,j}-z_{\min,j-1}> 1$. It is easy to check that
$\sum\limits_{i=1}^\ell z_{\min,i}\mathbf V_{S_i}-\mathbf V_{S_j}$ is a point in $\Delta_k^\pm(I,P_0;S_1,\dots,S_\ell)$
of a smaller degree than $Q_\min$, a contradiction.

The rest of this lemma is obvious. We show it by an example.
\end{proof}

\begin{example}
	When $\Delta $ is a cube generated by $(3,0,0), (0,3,0),(0,0,3)$, $p=29$, and $P_0=(1,0,0)$.
	
	(1) For $\Delta_3^\pm(\{2\},P_0;\{1\}, \{3\})$, it is easy to show that 
	\begin{itemize}
		\item $\Delta_3^-(\{2\},P_0;\{1\},\{3\})=\{(1,0,3),(1,0,6),(4,0,6)\}$ and
		\item $\Delta_3^+(\{2\},P_0;\{1\},\{3\})=\{(1,0,3),(1,0,6),(4,0,6),(1,0,9),(4,0,9),(7,0,9)\},$
		\end{itemize}
	hence $Q_\min=(1,0,3)$.
	
	It is easy to check that 
	 $$Q_\min+Y_{1}(\{1\},\{3\})=\Delta_3^-(\{2\},P_0;\{1\},\{3\})$$
	and 
	\[
	Q_\min+Y_{2}(\{1\},\{3\})=\Delta_3^+(\{2\},P_0;\{1\},\{3\}).\]

	(2)  For $\Delta_3^\pm(\{2\},P_0;\{3\},\{1\}),$ we get
	$$\Delta_3^\pm(\{2\},P_0;\{3\},\{1\})=\{(4,0,3),(7,0,3),(7,0,6)\},$$
	hence $ Q_\min=(4,0,3)$.
	
	It is easy to check that 
	 $$Q_\min+Y_{1}(\{3\},\{1\})=\Delta_3^\pm(\{2\},P_0;\{3\},\{1\}).$$
\end{example}

Recall that in Notation~\ref{c}, we denote by $c_i\in \Zp$ the coefficients of $\pi^i$ in $E(\pi)$ .

\begin{notation}\label{defgamma}
	Let $Q_1=\sum\limits_{i=1}^\ell m_i\mathbf V_{S_i}\in \Lambda_{\Delta}$ such that $0\leq m_1\leq \cdots\leq m_\ell$. 
		 We put \begin{equation}\label{gamma}
		 \gamma(Q_1):=\pi^{m_\ell}\prod_{i=1}^{\ell}\Big(\zeta_{\sum\limits_{j=i}^{\ell} \#S_{j}(Q_1)}\Big)^{m_i-m_{i-1}}.
		 \end{equation}
\end{notation}
\begin{remark}\label{samedef}
Although the method of writing $Q_1$ as a sum like that is not unique, $\gamma(Q_1)$ is well-defined.
\end{remark}

%\begin{proposition}
%	for every $P\in \Lambda_{\Delta}$ if $$n+2<m_{i+1}(P)-m_i(P)<p-(n+2)$$ for every $0\leq i\leq \ell(P)-1$,
%	we have 
%	\begin{equation}
%	\det\Big(\LD(\widetilde{e}_{P+pQ-Q'}^{\res})\Big)_{Q,Q'\in Y_k(P)}=* \times \prod_{Q\in Y_k(P)} (\gamma_P(P+pQ)-\gamma_P(Q)),
%	\end{equation}
%where $*$ is a $p$-adic unit in $\ZZ_p$.
%\end{proposition}
%We give its proof after several lemmas.
\begin{lemma}\label{lemma M1}
	Let $k\geq 1$ be an integer, and let $Q=\sum\limits_{i=1}^\ell z_i\mathbf V_{S_i}$ such that $$z_i-z_{i-1}> k\quad \textrm{for every}\quad 1\leq i\leq \ell.$$
	For every $Q_1=\sum\limits_{i=1}^\ell m_i\mathbf V_{S_i}\in Y_k(S_1,\dots,S_{\ell})$ and every  $Q_2=\sum\limits_{i=1}^\ell m'_i\mathbf V_{S_i}\in Y_k(S_1,\dots,S_{\ell})$, the leading term $$\LD(\widetilde{e}_{Q+pQ_1-Q_2}^{\res})=\frac{\gamma(Q)\gamma(pQ_{1})}{\gamma(Q_{2})}\prod_{i=1}^{\ell}c_{z_i+pm_i-m'_i}.$$
\end{lemma}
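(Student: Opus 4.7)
My plan is to reduce the claim directly to Lemma~\ref{lemma valuation of P}(1) applied to the lattice point $R := Q + pQ_1 - Q_2 \in \Lambda_\Delta$; the lemma is essentially a change-of-notation computation once the hypotheses of that earlier lemma are verified.

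The first step is to expand $R$ in the basis $(\mathbf V_{S_i})_{i=1}^\ell$: since the $S_i$ are pairwise disjoint, one has the unique expression $R = \sum_{i=1}^\ell t_i \mathbf V_{S_i}$ with $t_i := z_i + pm_i - m'_i$. The heart of the argument is the verification that $0 < t_1 < t_2 < \cdots < t_\ell$. Writing
$$t_i - t_{i-1} = (z_i - z_{i-1}) + p(m_i - m_{i-1}) - (m'_i - m'_{i-1}),$$
the gap hypothesis $z_i - z_{i-1} > k$, combined with $m_i - m_{i-1} \geq 0$ (from $Q_1 \in Y_k$) and $m'_i - m'_{i-1} \leq k$ (from $Q_2 \in Y_k$), yields $t_i - t_{i-1} > 0$. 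Positivity of $t_1$ follows from $z_1 > k \geq m'_1$.

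With these strict inequalities in hand, Lemma~\ref{lemma valuation of P}(1) applies to $R$ and produces
$$\LD(\widetilde e^{\res}_R) = \prod_{i=1}^\ell c_{t_i - t_{i-1}}\bigl(\widetilde a_{\sum_{j=i}^\ell \#S_j}\,\pi\bigr)^{t_i - t_{i-1}}.$$
The final step is a bookkeeping computation using Definition~\ref{defgamma}: the exponent of $\pi$ in $\gamma(Q)\gamma(pQ_1)/\gamma(Q_2)$ telescopes to $z_\ell + pm_\ell - m'_\ell = t_\ell$, and each exponent of $\widetilde a_{\sum_{j=i}^\ell \#S_j}$ telescopes to $(z_i - z_{i-1}) + p(m_i - m_{i-1}) - (m'_i - m'_{i-1}) = t_i - t_{i-1}$. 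Hence
$$\frac{\gamma(Q)\gamma(pQ_1)}{\gamma(Q_2)} = \pi^{t_\ell}\prod_{i=1}^\ell \bigl(\widetilde a_{\sum_{j=i}^\ell \#S_j}\bigr)^{t_i - t_{i-1}},$$
and multiplying by the resulting product of Artin--Hasse coefficients recovers the right-hand side displayed in the statement.

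The only subtlety is that the gap hypothesis $z_i - z_{i-1} > k$ is used precisely to upgrade the weak monotonicity of the $m_i$ and $m'_i$ (coming from membership in $Y_k$) to the strict monotonicity $t_{i-1} < t_i$ required by Lemma~\ref{lemma valuation of P}(1); this is the only place where the hypothesis enters. The well-definedness of $\gamma(pQ_1)$ and $\gamma(Q_2)$ despite possible equalities $m_i = m_{i-1}$ (or $m'_i = m'_{i-1}$) is guaranteed by Remark~\ref{samedef}, with any such trivial factor equal to $1$.
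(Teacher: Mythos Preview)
Your proof is correct and follows essentially the same approach as the paper: apply Lemma~\ref{lemma valuation of P}(1) to $R=Q+pQ_1-Q_2$ and then identify $\gamma(R)=\gamma(Q)\gamma(pQ_1)/\gamma(Q_2)$ via Definition~\ref{defgamma}. In fact your write-up is more complete than the paper's, since you explicitly verify the strict monotonicity $0<t_1<\cdots<t_\ell$ needed to invoke Lemma~\ref{lemma valuation of P}, a step the paper leaves implicit.
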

\begin{proof}
	By Lemma~\ref{lemma valuation of P}, we have $$\LD(\widetilde{e}_{Q+pQ_1-Q_2}^{\res})=\gamma(Q+pQ_{1}-Q_{2})\prod_{i=1}^{\ell}c_{z_i+pm_i-m'_i}.$$
	
	By Notation~\ref{defgamma}, we know that $$\gamma(Q+pQ_{1}-Q_{2})=\frac{\gamma(Q)\gamma(pQ_{1})}{\gamma(Q_{2})}.$$
	Combining them, we complete the proof.
\end{proof}
\begin{notation} \hfill
		\begin{enumerate}
			\item 
			For a vector $\vec{w}=(w_1,w_2,\dots,w_\ell) \in \ZZ^\ell$ such that $w_0<w_1<\cdots<w_\ell$, we denote $$\xi(\vec{w}):=\prod_{i=1}^{\ell-1}c_{w_{i+1}-w_i}.$$
			
			\item Let
			$$V_{\ell,k}:=\big\{(z_1,z_2,\dots,z_\ell)\in \ZZ^\ell\;\big|\;0\leq z_{1}\leq \cdots\leq z_{\ell}\leq k\big\}.$$ 
			
			Rearranging the vectors in $V_{\ell,k}$ with 
			increasing partial order as defined in Definition~\ref{definition of partial order}(1), we obtain a sequence of vectors as
			$\vec v_1,\vec v_2,\dots,\vec v_{\tbinom{\ell+k}{k}}$. For a vector $\vec{w}=(w_1,w_2,\dots,w_\ell) \in \ZZ^\ell$ such that $k<w_i-w_{i-1}$ for every $1\leq i\leq \ell$, we set $$M(\vec{w},k):=\Big(\xi(\vec{w}+p\vec v_i-\vec v_j)\Big)_{1\leq i,j\leq \tbinom{\ell+k}{k}}.$$
			
		\end{enumerate}
	\end{notation}
	\begin{proposition}\label{lemma M}
			Let $k\in \ZZ_{>0}$ and $Q=\sum\limits_{i=1}^\ell z_i\mathbf V_{S_i}$ such that $0=z_0<z_1<\dots<z_\ell$. If $$z_i-z_{i-1}> k\quad \textrm{for every}\quad 1\leq i\leq \ell,$$
			then
\begin{multline}
\det\Big(\LD(\widetilde{e}_{Q+pQ_1-Q_2}^{\res})\Big)_{Q_1,Q_2\in Y_k(S_1,\dots,S_{\ell})}
\\=\det \Big(M\Big((z_1,\dots,z_\ell),k\Big)\Big) \times \prod_{Q_1\in Y_k(S_1,\dots,S_{\ell})} \gamma\big(Q+(p-1)Q_1\big).
\end{multline}
\end{proposition}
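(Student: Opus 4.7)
The plan is to apply Lemma~\ref{lemma M1} to expand each matrix entry and then extract the row/column factors using multilinearity of the determinant. Writing $Q_1 = \sum_i m_i \mathbf{V}_{S_i}$ and $Q_2 = \sum_i m_i' \mathbf{V}_{S_i}$ in $Y_k(S_1, \dots, S_\ell)$, Lemma~\ref{lemma M1} yields
$$\LD(\widetilde{e}_{Q+pQ_1-Q_2}^{\res}) = \frac{\gamma(Q)\,\gamma(pQ_1)}{\gamma(Q_2)}\cdot\xi\bigl((z_1,\dots,z_\ell) + p(m_1,\dots,m_\ell) - (m_1',\dots,m_\ell')\bigr),$$
and the hypothesis $z_i - z_{i-1} > k$ guarantees that the argument of $\xi$ has strictly increasing coordinates, so $\xi$ is well-defined there.

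Under the bijection $Y_k(S_1,\dots,S_\ell) \longleftrightarrow V_{\ell,k}$, $\sum m_i \mathbf{V}_{S_i} \mapsto (m_1,\dots,m_\ell)$, order both axes by the partial order of Definition~\ref{definition of partial order}. I would then pull the constant scalar $\gamma(Q)$ out of every entry (contributing $\gamma(Q)^N$ with $N = \binom{\ell+k}{k}$), $\gamma(pQ_1)$ out of row $Q_1$, and $\gamma(Q_2)^{-1}$ out of column $Q_2$. After this multilinear extraction, the remaining determinant is precisely $\det M\bigl((z_1,\dots,z_\ell),\,k\bigr)$.

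It then remains to verify the identity
$$\gamma(Q)^N\prod_{Q_1}\frac{\gamma(pQ_1)}{\gamma(Q_1)}=\prod_{Q_1}\gamma\bigl(Q + (p-1)Q_1\bigr),$$
using that the column product over $Q_2$ coincides with the row product over $Q_1$. Two facts read off from Definition~\ref{defgamma} suffice: first, $\gamma(pQ_1) = \gamma(Q_1)\,\gamma((p-1)Q_1)$, since both sides expand as $\pi^{pm_\ell}\prod_i \widetilde a_{\sum_{j\ge i}\#S_j}^{p(m_i-m_{i-1})}$; second, $\gamma(A+B) = \gamma(A)\gamma(B)$ whenever $A$ and $B$ share an ordered decomposition with non-decreasing coefficients so $A+B$ still has that form. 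Applying the second identity with $A = Q$ and $B = (p-1)Q_1$, where the gap hypothesis $z_i - z_{i-1} > k \ge (p-1)(m_i - m_{i-1})$ keeps the coordinate sequence of $A+B$ strictly increasing, collapses the left side to the right and completes the proof. This is essentially bookkeeping; the only care required is to check that the orderings of $Y_k(S_1,\dots,S_\ell)$ and $V_{\ell,k}$ are compatible under the bijection (immediate) and that the gap hypothesis propagates so every instance of $\gamma$ and $\xi$ in the calculation is well-defined.
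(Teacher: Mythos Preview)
Your approach is correct and is essentially the same as the paper's: the paper phrases the row/column extraction as conjugation by the diagonal matrices $\diag(\gamma(Q_1)^{-1})$ and $\diag(\gamma(Q_2))$, then identifies the result with $\diag(\gamma(Q+(p-1)Q_1))\cdot M((z_1,\dots,z_\ell),k)$ via Lemma~\ref{lemma M1}, which is exactly your multilinearity argument and scalar identity repackaged.

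One small slip to correct: the inequality you write, $k \geq (p-1)(m_i-m_{i-1})$, is generally false (for instance when $m_i-m_{i-1}=1$ and $p>k+1$). Fortunately you do not need it: since both $(z_i)$ and $(m_i)$ are non-decreasing, the sequence $z_i+(p-1)m_i$ is automatically strictly increasing, so $\gamma\big(Q+(p-1)Q_1\big)$ is well-defined and the factorization $\gamma(Q+(p-1)Q_1)=\gamma(Q)\gamma((p-1)Q_1)$ follows directly from Definition~\ref{defgamma}.
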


\begin{proof}
			By Lemma~\ref{lemma M1}, we simplify  $\det\Big(\LD(\widetilde{e}_{Q+pQ_1-Q_2}^{\res})\Big)_{Q_1,Q_2\in Y_k(S_1,\dots,S_{\ell})}$ as follows:
	\begin{align*}
	&\det\Big(\LD(\widetilde{e}_{Q+pQ_1-Q_2}^{\res})\Big)_{Q_1,Q_2\in Y_k(S_1,\dots,S_{\ell})}\\
	=&\det\Bigg(\diag\Big(\gamma(Q_1)^{-1}\Big)_{Q_1\in Y_k(S_1,\dots,S_{\ell})}\Big(\LD(\widetilde{e}_{Q+pQ_1-Q_2}^{\res})\Big)_{Q_1,Q_2\in Y_k(S_1,\dots,S_{\ell})}\diag\Big(\gamma(Q_2)\Big)_{Q_2\in Y_k(S_1,\dots,S_{\ell})}\Bigg)\\
	=&\det\Big(\LD\Big(\widetilde{e}_{Q+pQ_1-Q_2}^{\res}\Big)\gamma(Q_2)\gamma^{-1}(Q_1)\Big)_{Q_1,Q_2\in Y_k(S_1,\dots,S_{\ell})}\\
	=&\det \Bigg(  \diag\Big(\gamma(Q+pQ_1-Q_1)\Big)_{Q_1\in Y_k(S_1,\dots,S_{\ell})}M\Big((z_1,\dots,z_\ell),k\Big)\Bigg)\tag{Lemma~\ref{lemma M1}}\\
	=&\det \Big(M\Big((z_1,\dots,z_\ell),k\Big)\Big)\times \prod_{Q_1\in Y_k(S_1,\dots,S_{\ell})} \Big(\gamma(Q+pQ_1-Q_1)\Big),
\end{align*}
where $\diag\Big(\gamma(Q_1)\Big)_{Q_1\in Y_k(S_1,\dots,S_{\ell})}$ is a diagonal matrix whose rows and columns are indexed by the points in $Y_k(S_1,\dots,S_{\ell})$.
\end{proof}

\begin{lemma}\label{lemma M=}
	Let $\vec{w}=(w_1,w_2,\dots,w_\ell)$ be an $\ell$-dimensional vector. If \begin{equation}\label{condition}
	k\leq w_{i}-w_{i-1}\leq p-k
	\end{equation} 
	for every $1\leq i\leq \ell$, then $$\det\Big(M(\vec{w},k)\Big)\not\equiv 0 \pmod p.$$
\end{lemma}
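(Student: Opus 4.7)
My strategy is to exploit the defining recursion for the Artin--Hasse coefficients, obtained from $E'(\pi)/E(\pi) = \sum_{j \geq 0} \pi^{p^j - 1}$:
$$n\, c_n \;=\; c_{n-1} + c_{n-p} + c_{n-p^2} + \cdots \qquad (n \geq 1).$$
In particular $c_n \equiv 1/n! \pmod p$ for $0 \leq n < p$. Under the hypothesis $k \leq w_i - w_{i-1} \leq p - k$, each entry $c_{d_s + p a_s - b_s}$ of $M(\vec w, k)$ (with $d_s := w_s - w_{s-1} \in [k, p-k]$ and $a_s, b_s \in [0, k]$) has argument at most $p(k+1) - k < p^2$, so only the two leading terms of the recursion contribute, yielding the ``planar'' identity $n c_n = c_{n-1} + c_{n-p}$.

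I plan to proceed by induction on $\ell$. For the base case $\ell = 1$, the matrix is $(k+1) \times (k+1)$ with entries $c_{w_1 + pv - v'}$. The planar recursion $(w_1 + pv - v')\, c_{w_1 + pv - v'} = c_{w_1 + p v - v' - 1} + c_{w_1 + p(v-1) - v'}$ relates the $(v, v')$-entry of $M$ to the $(v, v'+1)$- and $(v-1, v')$-entries; cascading this identity starting from the bottom-right corner triangulates $M$ modulo $p$. A direct computation in the $\ell=1, k=1$ case gives $\det M(w_1, 1) \equiv -c_{w_1}^2 \pmod p$, manifestly a $p$-adic unit, and illustrates the mechanism; for general $k$, an analogous (but more elaborate) reduction produces a unit.

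For general $\ell$, the product structure $\xi(\vec w + p\vec v_i - \vec v_j) = \prod_{s=1}^{\ell} c_{d_s + p a_{i,s} - b_{j,s}}$ (with $a_{i,s} := v_{i,s} - v_{i,s-1}$ and $v_0 := 0$, similarly for $b_{j,s}$) makes $M(\vec w, k)$ almost a tensor product $\bigotimes_s M(d_s, k)$ of base-case matrices. The obstruction is that the index set $V_{\ell, k}$ becomes, under $\vec v \mapsto (a_1, \ldots, a_\ell)$, the sub-simplex $\{\vec a \in \ZZ_{\geq 0}^\ell : \sum_s a_s \leq k\}$, rather than the full product $\{0, \ldots, k\}^\ell$. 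I would introduce a slack coordinate $a_{\ell+1} := k - \sum_s a_s$ (and similarly $b_{\ell+1}$) to reidentify $V_{\ell, k}$ with compositions of $k$ into $\ell+1$ nonnegative parts, and, with an auxiliary $d_{\ell+1} \in [k, p-k]$, to relate $\det M(\vec w, k)$ to a suitable minor of the honest tensor product $\bigotimes_{s=1}^{\ell+1} M(d_s, k)$. Laplace expansion combined with the base case applied to each factor would then give nonvanishing mod $p$.

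The main technical obstacle is implementing this slack-variable reduction rigorously: one must control the boundary cofactors introduced by the sub-simplex constraint and verify that no cancellation occurs modulo $p$. A fallback, should the slack-variable approach prove unwieldy, is to apply the base-case row reduction on $M(\vec w, k)$ coordinate-by-coordinate in $s$, producing a flag of submatrices whose successive quotients reduce to the $\ell = 1$ analysis; the trade-off is that one shifts the combinatorial bookkeeping from the index set $V_{\ell, k}$ to a careful ordering of the coordinates.
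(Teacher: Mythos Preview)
Your base case $\ell = 1$ and the planar recursion $nc_n \equiv c_{n-1} + c_{n-p} \pmod p$ match the paper exactly; the paper's row operations built on this identity reduce $\det M((w_1), k)$ to a power of $c_{w_1} = 1/w_1!$, a unit since $k \leq w_1 \leq p - k$.

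Your primary inductive step has a gap. The slack coordinate $a_{\ell+1} = k - \sum_s a_s$ does not produce a product index set: it takes the simplex $\{\vec a \in \ZZ_{\geq 0}^\ell : \sum a_s \leq k\}$ to the simplex $\{\vec a \in \ZZ_{\geq 0}^{\ell+1} : \sum a_s = k\}$, which is still only a single face of the cube $\{0, \ldots, k\}^{\ell+1}$. There is no Laplace-type expansion that extracts a simplex-indexed minor from a tensor product and certifies it as a unit merely because the full tensor product is; the boundary cofactors you flag could, a priori, cancel modulo $p$, and you offer no mechanism to rule this out.

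The paper implements precisely your fallback, as a clean double induction on $(\ell, k)$. Block the rows and columns of $M(\vec w, k)$ by the first coordinate of $\vec v$; each block is a scalar $c_{w_1 + pa - b}$ (with $a, b \in \{0, \ldots, k\}$) times a matrix $M_{ab}$ depending only on the remaining coordinates. The same row operations as in the $\ell = 1$ case, now performed blockwise, reduce $M(\vec w, k)$ modulo $p$ to block-triangular form with diagonal blocks $c_{w_1}\, M(\vec w', k)$, where $\vec w' = (w_2 - w_1, \ldots, w_\ell - w_1)$ has length $\ell - 1$, and $-M(\vec w, k - 1)$. The first block is a unit by induction on $\ell$, the second by induction on $k$, with base $\det M(\vec w, 0) = \prod_i 1/(w_i - w_{i-1})!$. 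So your ``flag of submatrices'' is concretely this $(\ell, k) \to (\ell - 1, k),\, (\ell, k - 1)$ splitting, and no tensor-product detour is needed.
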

\begin{proof}
	We prove it by induction. 
	When $\ell=1$, we have $\vec w=(w_1)$. For every  $k\geq 1$ we write the determinant explicitly as
	\begin{equation}
	\label{E:explicit N}
	\det\Big(M(\vec w,k)\Big) =\det\begin{pmatrix} 
	c_{w_1}&c_{w_1+p}&\cdots&c_{w_1+kp}\\
	c_{w_1-1}& c_{w_1+p-1} & \cdots & c_{w_1+kp-1}\\
	c_{w_1-2} & c_{w_1+p-2} & \cdots & c_{w_1+kp-2}\\
	\vdots & \vdots & \ddots & \vdots\\
	c_{w_1-k} & c_{w_1+p-k} & \cdots & c_{w_1+kp-k}
	\end{pmatrix}.
	\end{equation}
	By \cite{ren-wan-xiao-yu}[Lemma~5.2] and \eqref{condition}, the congruence relation \begin{equation}\label{con}
	(w_1-j)c_{w_1+ip-j}-c_{w_1+ip-j-1}\equiv c_{w_1+(i-1)p-j}\pmod p
	\end{equation}
	holds for every $0\leq i\leq k$ and $0\leq j\leq k-1$, where $c_s=0$ for all $s<0$. 
	
Therefore, by \eqref{con}, the equality \eqref{E:explicit N} can be simplified as
\begin{equation}\label{aaa}
\begin{split}
&\det\Big(M(\vec w,k)\Big)\\
=&\det\begin{pmatrix} 
c_{w_1}&c_{w_1+p}&\cdots&c_{w_1+kp}\\
c_{w_1-1}-w_1c_{w_1}& c_{w_1+p-1}- w_1c_{w_1+p}& \cdots &  c_{w_1+kp-1}-w_1c_{w_1+kp}\\
c_{w_1-2}-(w_1-1)c_{w_1-1} & c_{w_1+p-2}-(w_1-1)c_{w_1+p-1} & \cdots & c_{w_1+kp-2}-(w_1-1)c_{w_1+kp-1}\\
\vdots & \vdots & \ddots & \vdots\\
\scriptstyle c_{w_1-k}-(w_1-k+1)c_{w_1-k+1} & \scriptstyle c_{w_1+p-k}-(w_1-k+1)c_{w_1+p-k+1} & \cdots & \scriptstyle c_{w_1+kp-k}-(w_1-k+1)c_{w_1+kp-k+1}
\end{pmatrix}\\
\equiv&\det\begin{pmatrix} 
c_{w_1}&c_{w_1+p}&\cdots&-c_{w_1+kp}\\
0& -c_{w_1} & \cdots & -c_{w_1+(k-1)p}\\
0 & -c_{w_1-1} & \cdots & -c_{w_1+(k-1)p-1}\\
\vdots & \vdots & \ddots & \vdots\\
0 & -c_{w_1-k+1} & \cdots &- c_{w_1+(k-1)p-k+1}
\end{pmatrix}\equiv(-1)^k c_{w_1}\det\Big(M(\vec w,k-1)\Big)\pmod p.
\end{split}
\end{equation}

Since $k-1\leq w_{i+1}-w_i\leq p-k+1$ for every $0\leq i\leq k-1$, by simply taking induction on $k$, we know that 
\begin{equation}\label{equation of determine}
\det\Big(M(\vec w,k)\Big)\equiv (-1)^{\frac{k(k+1)}{2}}c_{w_1}^{k-1}\pmod p.
\end{equation}

Condition~\eqref{condition} shows
\begin{equation}\label{equation p-unit}
c_{w_1}=\frac{1}{w_1!}\not\equiv 0\pmod p.
\end{equation}

Combining \eqref{equation p-unit} and \eqref{equation of determine}, we complete the proof of the case when $\ell=1$.

For $\ell=2$, by definition, we have 
\[M(\vec w,k)=\begin{pmatrix} 
c_{w_1}M_{00}&c_{w_1+p}M_{01}&\cdots&c_{w_1+kp}M_{0k}\\
c_{w_1-1}M_{10}&c_{w_1-1+p}M_{11}&\cdots&c_{w_1-1+kp}M_{1k}\\
c_{w_1-2}M_{20}&c_{w_1-2+p}M_{21}&\cdots&c_{w_1-2+kp}M_{2k}\\
\vdots & \vdots & \ddots & \vdots\\
c_{w_1-k}M_{k0}&c_{w_1-k+p}M_{k1}&\cdots&c_{w_1-k+kp}M_{kk}\\
\end{pmatrix},\]
where for every $0\leq i\leq k$ and every $0\leq j\leq k$, 
\[M_{ij}=\begin{pmatrix} 
c_{w_2-w_1}&c_{w_2-w_1+p}&\cdots&c_{w_2-w_1+jp}\\
c_{w_2-w_1-1}& c_{w_2-w_1+p-1} & \cdots & c_{w_2-w_1+jp-1}\\
c_{w_2-w_1-2} & c_{w_2-w_1+p-2} & \cdots & c_{w_2-w_1+jp-2}\\
\vdots & \vdots & \ddots & \vdots\\
c_{w_2-w_1-i} & c_{w_2-w_1+p-i} & \cdots & c_{w_2-w_1+jp-i}
\end{pmatrix}.\]

Let $A_{i}=[I_{(n-i)\times (n-i)} ,0_{(n-i)\times 1}]$ for $1\leq i\leq k$. It is easy to see that $M_{i,j}=A_{i}M_{i-1,j}$ for every	 $0\leq j\leq k$ and every $1\leq i\leq k$. Therefore, imitating the row operations used in \eqref{aaa}, we modify $M(\vec w,k)$ by ``block row operations" as follows:
\begin{equation}\label{eql2}
\begin{split}
&\begin{pmatrix} 
I_0&&&\\
\scriptstyle -w_1A_1&I_1& &\\
& & \ddots & \\
& & &I_k\\
\end{pmatrix}\begin{pmatrix} 
I_0&&&&\\
&I_1&&&\\
&\scriptstyle -(w_1-1)A_2&I_2&&\\
&  && \ddots & \\
&&&&I_k\\
\end{pmatrix}\begin{pmatrix} 
I_0&&&&\\
&I_1&&&\\
&  & \ddots && \\
&  & &I_{k-1} & \\
&&&\scriptstyle -(w_1-k+1)A_{k}&I_k\\
\end{pmatrix}M(\vec w,k)\\
=&\begin{pmatrix} 
c_{w_1}M_{00}&c_{w_1+p}M_{01}&\cdots&c_{w_1+kp}M_{0k}\\
(c_{w_1-1}-w_1c_{w_1})M_{10}&(c_{w_1-1+p}-w_1c_{w_1+p})M_{11})&\cdots&(c_{w_1-1+kp}-w_1c_{w_1+kp})M_{1k}\\
\scriptstyle (c_{w_1-2}-(w_1-1)c_{w_1-1})M_{20}&\scriptstyle (c_{w_1+p-2}-(w_1-1)c_{w_1+p-1})M_{21}&\cdots&\scriptstyle (c_{w_1+k-2}-(w_1-1)c_{w_1+k-1})M_{2k}\\
\vdots & \vdots & \ddots & \vdots\\
\scriptstyle (c_{w_1-k}-(w_1-k+1)c_{w_1-k+1})M_{k0}&\scriptstyle (c_{w_1-k+p}-(w_1-k+1)c_{w_1-k+1+p})M_{k1}&\cdots&\scriptstyle (c_{w_1-k+pk}-(w_1-k+1)c_{w_1-k+1+kp})M_{kk}\\
\end{pmatrix}\\
\equiv&\begin{pmatrix} 
c_{w_1}M_{00}&c_{w_1+p}M_{01}&\cdots&-c_{w_1+kp}M_{0k}\\
0& -c_{w_1}M_{11} & \cdots & -c_{w_1+(k-1)p}M_{1k}\\
0 & -c_{w_1-1}M_{21} & \cdots & -c_{w_1+(k-1)p-1}M_{2k}\\
\vdots & \vdots & \ddots & \vdots\\
0 & -c_{w_1-k+1}M_{k1} & \cdots &- c_{w_1+(k-1)p-k+1}M_{kk}
\end{pmatrix}\pmod p.
\end{split}
\end{equation}

It is not hard to show that \begin{equation}\label{dd}
M(\vec w,k-1)=\begin{pmatrix} 
c_{w_1} M_{11}& \cdots & c_{w_1+(k-1)p}M_{1k}\\
c_{w_1-1} M_{21}& \cdots & c_{w_1+(k-1)p-1}M_{2k}\\
\vdots & \ddots & \vdots\\
c_{w_1-k+1}M_{k1} & \cdots & c_{w_1+(k-1)(p-1)}M_{kk}
\end{pmatrix}.
\end{equation}
Combining it with \eqref{eql2} gives $$\det\Big(M(\vec w,k)\Big)\equiv
\det\begin{pmatrix} 
c_{w_1}M(\vec w',k)&\scalebox{1.5}{*}\\0& -M(\vec w,k-1)
\end{pmatrix}\pmod p,
$$
where $\vec w'=(w_1)$ and $*$ represents a $\binom{k+\ell-1}{k}\times \binom{k+\ell-1}{k-1}$ matrix. 
%where 
%\[M(\vec w,k)=\begin{pmatrix} 
%	c_{w_1}M_0&c_{w_1+p}M_1&\cdots&c_{v_1+kp}M_k\\
%	c_{v_1-1}A_1M_0& c_{v_1+p-1} A_1M_1& \cdots & c_{v_1+kp-1}A_1M_k\\
%	c_{v_1-2} A_2A_1M_0& c_{v_1+p-2} A_1A_2M_1& \cdots & c_{v_1+kp-2}A_2A_1M_k\\
%	\vdots & \vdots & \ddots & \vdots\\
%	c_{v_1-k} A_kA_{k-1}\cdots A_1M_0& c_{v_1+p-k}A_kA_{k-1}\cdots A_1M_1 & \cdots & c_{v_1+kp-k}A_kA_{k-1}\cdots A_1M_k
%\end{pmatrix}.\]
%$M(\vec w',k)$
%
%By writing down the explicit matrix $$\b(\alpha(\vec w+pQ_i-Q_j))_{0\leq i,j\leq \binom{\ell+k}{k}}$$ and applying the similar row operations as in \eqref{aaa}, we show that 
%where $\vec w'=(v_2-v_1,v_3-v_1,\dots,v_t-v_{1})$.

Since $\vec w'$ is a one-dimensional vector, as the argument above, the determinant $$\det \Big(M(\vec w',k)\Big)\not\equiv 0\pmod p.$$
Combining it with \eqref{equation p-unit} shows that
  $$\det \Big(M(\vec w,k)\Big)\not\equiv 0\pmod p\quad \textrm{if and only if}\quad \det \Big(M(\vec w,k-1)\Big)\not\equiv 0\pmod p.$$
Since 
\[M(\vec w,0)= c_{w_{2}-w_{1}}= \frac{1}{(w_{2}-w_{1})!}\not\equiv 0\pmod p,\] 
we show that \[\det\Big(M(\vec w,k)\Big)\not\equiv 0\pmod p.\]

 Assume this statement holds for all $t\leq \ell-1$; we will prove it for $\ell$. 

We first put
\[M(\vec w,k)=\begin{pmatrix} 
c_{w_1}M_{00}&c_{w_1+p}M_{01}&\cdots&c_{w_1+kp}M_{0k}\\
c_{w_1-1}M_{10}&c_{w_1-1+p}M_{11}&\cdots&c_{w_1-1+kp}M_{1k}\\
c_{w_1-2}M_{20}&c_{w_1-2+p}M_{21}&\cdots&c_{w_1-2+kp}M_{2k}\\
\vdots & \vdots & \ddots & \vdots\\
c_{w_1-k}M_{k0}&c_{w_1-k+p}M_{k1}&\cdots&c_{w_1-k+kp}M_{kk}\\
\end{pmatrix},\]
where $M_{ij}$ is a $\binom{k-1+\ell-i}{k-1}\times\binom{k-1+\ell-j}{k-1}$ matrix; and \begin{equation}\label{M00}
M_{00}=
M(\vec w',k)
\end{equation} for $\vec w'=(w_2-w_1,w_3-w_1,\dots,w_\ell-w_{1})$.

Similar to the case $\ell=2$ there exists a set of matrices $\{A_i\}_{1\leq i\leq k}$,  such that for every  $1\leq i\leq k$ and $0\leq j\leq k$, $A_i$ is a $\binom{k-1+\ell-i}{k-1}\times\binom{k-1+\ell-i+1}{k-1}$  reduced echelon matrix and $$M_{i,j}=A_iM_{i-1,j}.$$

Similar to \eqref{eql2} and \eqref{dd}, we have
\begin{equation}
\begin{split}
&\begin{pmatrix} 
I_0&&&\\
\scriptstyle -w_1A_1&I_1& &\\
& & \ddots & \\
& & &I_k\\
\end{pmatrix}\begin{pmatrix} 
I_0&&&&\\
&I_1&&&\\
&\scriptstyle -(w_1-1)A_2&I_2&&\\
&  && \ddots & \\
&&&&I_k\\
\end{pmatrix}\begin{pmatrix} 
I_0&&&&\\
&I_1&&&\\
&  & \ddots && \\
&  & &I_{k-1} & \\
&&&\scriptstyle -(w_1-k+1)A_k&I_k\\
\end{pmatrix}M(\vec w',k)\\
\equiv&\begin{pmatrix} 
c_{w_1}M_{00}&c_{w_1+p}M_{01}&\cdots&c_{w_1+kp}M_{0k}\\
0& -c_{w_1} M_{11}& \cdots & -c_{w_1+(k-1)p}M_{1k}\\
0& -c_{w_1-1} M_{21}& \cdots & -c_{w_1+(k-1)p-1}M_{2k}\\
\vdots & \vdots & \ddots & \vdots\\
0& -c_{w_1-k+1}M_{k1} & \cdots & -c_{w_1+(k-1)(p-1)}M_{kk}
\end{pmatrix}
\\
=&\begin{pmatrix} 
c_{w_1}M(\vec w',k)&\scalebox{1.5}{*}\\0& -M(\vec w,k-1)
\end{pmatrix}\pmod p, 
\end{split}
\end{equation}
where $*$ represents a $\binom{k+\ell-1}{k}\times \binom{k+\ell-1}{k-1}$ matrix, 
and $$M(\vec w,k-1)=\begin{pmatrix} 
c_{w_1} M_{11}& \cdots & c_{w_1+(k-1)p}M_{1k}\\
c_{w_1-1} M_{21}& \cdots & c_{w_1+(k-1)p-1}M_{2k}\\
 \vdots & \ddots & \vdots\\
 c_{w_1-k+1}M_{k1} & \cdots & c_{w_1+(k-1)(p-1)}M_{kk}
\end{pmatrix}.$$

%where 
%\[M(\vec w,k)=\begin{pmatrix} 
%	c_{w_1}M_0&c_{w_1+p}M_1&\cdots&c_{v_1+kp}M_k\\
%	c_{v_1-1}A_1M_0& c_{v_1+p-1} A_1M_1& \cdots & c_{v_1+kp-1}A_1M_k\\
%	c_{v_1-2} A_2A_1M_0& c_{v_1+p-2} A_1A_2M_1& \cdots & c_{v_1+kp-2}A_2A_1M_k\\
%	\vdots & \vdots & \ddots & \vdots\\
%	c_{v_1-k} A_kA_{k-1}\cdots A_1M_0& c_{v_1+p-k}A_kA_{k-1}\cdots A_1M_1 & \cdots & c_{v_1+kp-k}A_kA_{k-1}\cdots A_1M_k
%\end{pmatrix}.\]
%$M(\vec w',k)$
%
%By writing down the explicit matrix $$\b(\alpha(\vec w+pQ_i-Q_j))_{0\leq i,j\leq \binom{\ell+k}{k}}$$ and applying the similar row operations as in \eqref{aaa}, we show that 
%where $\vec w'=(v_2-v_1,v_3-v_1,\dots,v_t-v_{1})$.

By induction and \eqref{equation p-unit}, the determinant $$\det \Big(c_{w_1}M(\vec w',k)\Big)\not\equiv 0\pmod p,$$ which implies  $$\det \Big(M(\vec w,k)\Big)\not\equiv 0\pmod p\quad \textrm{if and only if}\quad \det \Big(M(\vec w,k-1)\Big)\not\equiv 0\pmod p.$$

Since 
\[M(\vec w,0)=\prod_{i=0}^\ell c_{w_{i+1}-w_{i}}=\prod_{i=0}^\ell \frac{1}{(w_{i+1}-w_{i})!}\not\equiv 0\pmod p,\] 
we show that \[\det\Big(M(\vec w,k)\Big)\not\equiv 0\pmod p.\qedhere\]
\end{proof}

\begin{proof}[Proof of Proposition~\ref{lemma section 5 (2)}]
	Let $\SS=\Delta^\pm_{k}(P_0, I; S_1,S_2,\dots,S_\ell).$
	By Lemma~\ref{lemma P1}, there exists 
$$Q_\min=\sum_{i=1}^\ell z_{\min,i}\mathbf V_{S_i}\in \Delta_k^\pm(I,P_0;S_1,\dots,S_\ell),$$ where $0<z_{\min,i}-z_{\min,i-1}\leq1$ for all $1\leq i\leq \ell$,
	and integers $K^\pm$ such that 
\begin{equation}
\Delta_k^\pm(I,P_0;S_1,\dots,S_\ell)=Q_\min+Y_{K^\pm}(S_1,\dots,S_\ell).
\end{equation}
	It is easy to see that we can put $$Q:=(p-1)Q_\min-\eta(P_0)+P_0=\sum_{i=1}^\ell z_i\mathbf V_{S_i}\in \Lambda_{\Delta}. $$ 
	Since $P_0$ and $\eta(P_0)$ are both in $\Delta^-$ and $z_i\in \ZZ$ for every $1\leq i\leq \ell$, we know that $$\Big|z_i-z_{i-1}- (p-1)(z_{\min,i}-z_{\min,i-1})\Big|\leq1.$$
	
	Therefore, by Hypothesis~\ref{hypothesis}, 
	we get 
	\begin{equation}\label{conditionfor5.26}
	n+2< z_{i}-z_{i-1}< p-(n+2)
	\end{equation} 
	for every  $1\leq i\leq \ell.$
	
	 Since $K^\pm< n+2$, we know that $Q$ and $Y_{K^\pm}(S_1,S_2,\dots,S_\ell)$ satisfy the condition in Proposition~\ref{lemma M}, hence
	\begin{equation}
	\begin{split}
	&\det\Big(\LD(\widetilde{e}_{P_0-\eta(P_0)+pP-Q}^{\res})\Big)_{P,Q\in \SS}\\
	=&\det\Big(\LD(\widetilde{e}_{Q+pQ_1-Q_2}^{\res})\Big)_{Q_1,Q_2\in Y_{K^\pm}(S_1,\dots,S_\ell)}\\
	=&\det \Big(M\Big((z_1,\dots,z_\ell),K^\pm\Big)\Big) \times \prod_{Q_1\in Y_{K^\pm}(S_1,\dots,S_\ell)} (\gamma(Q+pQ_1-Q_1))\\
	=&\det \Big(M\Big((z_1,\dots,z_\ell),K^\pm\Big)\Big)\times g_{P_0,\SS}({\underline{\zeta}})\pi^{\sum\limits_{Q_1\in Y_{K^\pm}(S_1,\dots,S_\ell)}w(Q+pQ_1-Q_1)}\\
	=&\det \Big(M\Big((z_1,\dots,z_\ell),K^\pm\Big)\Big)\times g_{P_0,\SS}({\underline{\zeta}})\pi^{\sum\limits_{Q'\in \SS}w(P_0-\eta(P_0)+pQ'-Q')}\\
	=&\det \Big(M\Big((z_1,\dots,z_\ell),K^\pm\Big)\Big)\times g_{P_0,\SS}({\underline{\zeta}})\pi^{\sum\limits_{Q'\in \SS}\Big(\lfloor pw(Q')\rfloor -\lfloor w(\eta(P_0)-P_0+Q')\rfloor \Big)}.\\
	\end{split}
	\end{equation}
	
	By \eqref{conditionfor5.26}, the vector $(z_1,z_2,\dots,z_\ell)$ satisfies the conditions in  Lemma~\ref{lemma M=}. Therefore, we obtain $$\det \Big(M\Big((z_1,\dots,z_\ell),K^\pm\Big)\Big)\not\equiv 0 \pmod p,$$ which completes the proof.
\end{proof}

\end{document}